\theoremstyle{plain}
\newtheorem{Satz}{Theorem}[section]
\newtheorem{Lemma}[Satz]{Lemma}
\newtheorem{Korollar}[Satz]{Corollary}
\newtheorem{Proposition}[Satz]{Proposition}
\theoremstyle{definition}
\newtheorem{Definition}[Satz]{Definition}
\newtheorem{Beispiel}[Satz]{Example}
\theoremstyle{remark}
\newtheorem{Bemerkung}[Satz]{Remark}
\newcommand{\Z}{\mathbb{Z}}
\newcommand{\R}{\mathbb{R}}
\newcommand{\C}{\mathbb{C}}
\newcommand{\Q}{\mathbb{Q}}
\newcommand{\PP}{\mathbb{P}}
\newcommand{\HH}{\mathbb{H}}
\DeclareMathOperator{\im}{Im}
\DeclareMathOperator{\SL}{SL_2}
\DeclareMathOperator{\SO}{SO}
\DeclareMathOperator{\OO}{O}
\DeclareMathOperator{\GL}{GL}
\DeclareMathOperator{\SLZ}{SL_2(\Z)}
\DeclareMathOperator{\SLR}{SL_2(\R)}
\DeclareMathOperator{\MPR}{Mp_2(\R)}
\DeclareMathOperator{\MPZ}{Mp_2(\Z)}
\DeclareMathOperator{\Spin}{Spin}
\DeclareMathOperator{\sign}{sign}
\DeclareMathOperator{\ef}{\mathfrak{e}}
\DeclareMathOperator{\eb}{\mathbf{e}}
\newcommand{\Ma}[4]{\begin{pmatrix} #1 & #2 \\ #3 & #4 \end{pmatrix}}
\newcommand{\sMa}[4]{\begin{psmallmatrix} #1 & #2 \\ #3 & #4 \end{psmallmatrix}}
\begin{document}

\title{Invariants for the Weil representation and Modular Units for Orthogonal Groups of Signature $(2,2)$}
\author{Patrick Bieker}

\address{Bielefeld University, Faculty of Mathematics, Postfach 100131, 33501 Bielefeld, Germany}
\email{pbieker@math.uni-bielefeld.de}

\begin{abstract}
	We show that the space of invariants for the Weil representation for discriminant groups which contain self-dual isotropic subgroups is spanned by the characteristic functions of the self-dual isotropic subgroups. 
	As an application, we construct modular units  
	for certain orthogonal groups in signature $(2,2)$ using Borcherds products.
\end{abstract}

\maketitle

\section{Introduction}

In \cite{Bor98}, Borcherds defined a lifting of (weakly holomorphic) vector valued modular forms for the Weil representation attached to an even lattice $L$ of signature $(2,n)$ to modular forms for a subgroup of the orthogonal group of $L$.
Moreover, the divisor of the resulting modular form is known, and these modular forms have infinite product expansions similar to the Dedekind $\eta$-function
\[ \eta(\tau) = \exp ( \pi i \tau /12 ) \prod_{n = 1}^\infty(1 - \exp(2\pi i \tau)^n)\]
at the cusps and are therefore called \emph{Borcherds products}.

We restrict ourselves to holomorphic modular forms for the Weil representation as inputs to the lift. In this case, the corresponding Borcherds products have divisors supported on the boundary.
We call meromorphic modular forms with divisor supported on the boundary \emph{modular units} in reference to the work of Kubert and Lang \cite{KuLa},
who studied modular functions of this form on modular curves for some congruence subgroups.

The Weil representation $\varrho_{L'/L}$ is a representation of $\SLZ$ (or in general of the metaplectic group $\MPZ$, the double cover of the special linear group) on the group ring  \( \C[L'/L] \) associated with the discriminant group \(L'/L\) of \(L\), where we denote by $L'$ the dual lattice of $L$. 
The holomorphic modular forms of weight 0 for the Weil representation 
are invariant vectors in $\C[L'/L]$.
For a subgroup \( H \subset L'/L \) we denote its characterisitic function in \( \C[L'/L] \) by 
\[ v^H = \sum_{\gamma \in H} \ef_\gamma,\]
where $\ef_\gamma$ for $\gamma \in H$ are the standard basis vectors in $\C[L'/L]$. 
\begin{Satz}[cf. Theorem \ref{IsoSubIn}]
	\label{Satz1}
	Assume that  $L'/L$ possesses self-dual isotropic subgroups. Then $\C[L'/L]^{\SLZ}$ is spanned by the characteristic functions $v^H$ of self-dual isotropic subgroups $H$ of $L'/L$.
\end{Satz}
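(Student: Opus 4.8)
\emph{The plan.} I would first dispose of the inclusion $\operatorname{span}\{v^H\}\subseteq\C[L'/L]^{\SLZ}$ by a direct computation, and then concentrate on the reverse inclusion, which carries the real content. For a self-dual isotropic $H$, since $q$ vanishes on $H$ the vector $v^H$ is fixed by $\varrho(T)$. For the generator $S$, I interchange the two summations and use character orthogonality on $H$: the inner sum $\sum_{\gamma\in H}e(-b(\gamma,\delta))$ equals $|H|$ if $\delta\in H^\perp$ and $0$ otherwise. Self-duality $H=H^\perp$ together with $|H|=\sqrt{|L'/L|}$ then yields $\varrho(S)v^H=e(-\sigma/8)\,v^H$, where $\sigma$ denotes the signature of $L$ modulo $8$. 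Finally, splitting the Gauss sum $\sum_{\gamma}e(q(\gamma))$ along the cosets of $H$ collapses it to $|H|=\sqrt{|L'/L|}$; comparing with Milgram's formula forces $\sigma\equiv 0\pmod 8$, so the phase is $1$ and $v^H$ is genuinely invariant. It remains to prove that the $v^H$ span.

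\emph{Reformulation.} With $\sigma\equiv 0$ the generators act by $\varrho(T)\ef_\gamma=e(q(\gamma))\ef_\gamma$ and $\varrho(S)\ef_\gamma=|L'/L|^{-1/2}\sum_\delta e(-b(\gamma,\delta))\ef_\delta$, so a vector $c=\sum c_\gamma\ef_\gamma$ is invariant if and only if $\operatorname{supp}(c)\subseteq I:=\{q=0\}$ (from $T$) and $\mathcal F c=c$ (from $S$), where $\mathcal F$ is the unitary Fourier transform attached to $b$. Because $q$ takes values in $\Q/\Z=\bigoplus_p\Q_p/\Z_p$, isotropy is checked prime by prime and $\mathcal F=\bigotimes_p\mathcal F_p$; likewise every self-dual isotropic subgroup is the orthogonal sum of its $p$-parts, and $v^H=\bigotimes_p v^{H_p}$. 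Hence both the invariant space and the spanning set factor over the primary components, and I may assume $L'/L$ is a $p$-group.

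\emph{Reduction along an isotropic line.} I would then induct on $|L'/L|$. Choose an isotropic line $K=\langle\gamma_0\rangle\cong\Z/p$ inside a Lagrangian and pass to $D_K:=K^\perp/K$, again metabolic (so $\sigma\equiv 0$) of order $|L'/L|/p^2$. The ``sum over $K$-cosets'' map $\uparrow_K\colon\C[D_K]\to\C[L'/L]$, $\ef_{\bar\gamma}\mapsto\sum_{\kappa\in K}\ef_{\gamma+\kappa}$, and its adjoint $\downarrow_K$ intertwine the two Weil representations and satisfy $\downarrow_K\uparrow_K=p\cdot\mathrm{id}$; moreover $\uparrow_K$ carries the characteristic functions of the Lagrangians of $D_K$ exactly onto the $v^H$ with $K\subseteq H$. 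By the inductive hypothesis applied to $D_K$, the map $\uparrow_K$ sends the invariants of $D_K$ onto $\operatorname{span}\{v^H:K\subseteq H\}$, and letting $K$ range over all isotropic lines recovers every Lagrangian; equivalently $\sum_K\uparrow_K\big(\C[D_K]^{\SLZ}\big)=\operatorname{span}\{v^H\}$.

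\emph{The main obstacle.} What is \emph{not} delivered by a single reduction is that these maps jointly detect every invariant: the crux is to show that an invariant $c$ with $\downarrow_K c=0$ for all isotropic lines $K$ — equivalently $\langle c,v^H\rangle=0$ for every Lagrangian $H$ — must vanish. This is precisely the surjectivity I need, and it is delicate because each single $K$ is blind to the Lagrangians avoiding it and because the $v^H$ themselves satisfy linear relations. To break it I would exploit the Witt-type extension property for metabolic forms (every isotropic subgroup lies in a Lagrangian, so $I=\bigcup_H H$) to reduce to the hyperbolic model $L'/L\cong A\oplus\Hom(A,\Q/\Z)$ with $q(x,\xi)=\xi(x)$, where $\mathcal F$ is the finite oscillator Fourier transform and the fixed vectors supported on $I$ can be computed by hand and matched against the $v^H$; alternatively I would compare $\dim\C[L'/L]^{\SLZ}$, obtained from the character formula for the Weil representation (which factors through the finite group $\SL_2(\Z/N)$), with the number of independent $v^H$. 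Pinning down the exact dimension, i.e.\ controlling the linear relations among the characteristic functions, is the step I expect to be hardest.
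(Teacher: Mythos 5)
Your opening two steps are correct and coincide with the paper's: the invariance of $v^H$ (your phase computation plus the Milgram/Gauss-sum argument forcing $\sign(L'/L)\equiv 0 \bmod 8$ is exactly the content of the result of Scheithauer that the paper cites), and the reduction to $p$-groups via the orthogonal decomposition into primary components. The genuine gap is precisely where you locate it yourself: the reverse inclusion. Your induction along isotropic lines $K$ only identifies $\sum_K \uparrow_K\bigl(\C[K^\perp/K]^{\SLZ}\bigr)$ with the span of the $v^H$; it gives no handle on an invariant $c$ killed by every $\downarrow_K$. Since the Weil representation is unitary, the statement ``every invariant orthogonal to all $v^H$ vanishes'' is not a lemma on the way to the theorem --- it \emph{is} the theorem, so the induction as set up reformulates the problem rather than reducing it. Neither escape route you sketch closes it. The first is based on a false premise: a finite quadratic form containing a self-dual isotropic subgroup need not be hyperbolic. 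For odd $p$, the cyclic group $\Z/p^2\Z$ with $q(x)=x^2/p^2 \bmod \Z$ is non-degenerate and has the Lagrangian $p\Z/p^2\Z$, but it cannot be of the form $A\oplus\Hom(A,\Q/\Z)$ because its underlying group is indecomposable; so there is no ``reduction to the hyperbolic model.'' The second route (dimension count via the character of the Weil representation, matched against the number of independent $v^H$) is exactly the hard quantitative problem --- this is what the dimension formulas of Zemel cited in the paper accomplish in special cases --- and you acknowledge you cannot control the linear relations among the $v^H$, which indeed exist (the paper exhibits them already for $D_{p,p}$).

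For comparison, the paper closes this gap with a mechanism adapted from Nebe--Rains--Sloane that your $\uparrow_K/\downarrow_K$ formalism lacks. First, invariance under $T$ together with matrices $M_u=\sMa{*}{*}{N}{u}\in\SLZ$, which act by $\ef_\gamma\mapsto\ef_{u\gamma}$ on isotropic $\gamma$, already forces any invariant into the span of the characteristic functions of \emph{all} isotropic subgroups, using the identity $v^{H^*}=v^H-v^{H'}$ for cyclic isotropic $H$ (here $H^*$ is the set of generators and $H'$ the maximal proper subgroup). Second, the averaging operator $M=\bigl(\tfrac1N\sum_{n}\varrho(T)^n\bigr)\circ\varrho(S)$, which fixes every $\SLZ$-invariant, acts \emph{triangularly} on this spanning set with respect to inclusion of isotropic subgroups, with diagonal coefficient equal to $1$ exactly at the self-dual ones; a linear-algebra lemma on poset-triangular operators then shows the fixed space of $M$ is spanned by the vectors attached to maximal, i.e.\ self-dual, isotropic subgroups. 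That triangularity argument is the concrete input you would need to make your induction --- or any approach via the partially ordered set of isotropic subgroups --- actually detect every invariant.
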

This result is mentioned but not shown for example in \cite[Theorem 1]{Skor08} and \cite{EhlSkor}. We give a proof by translating a similar result in the language of codes \cite[Theorem 5.5.7]{NRS} to our setup.

In the general case, that is, for discriminant forms that not necessarily contain self-dual isotropic subgroups, \cite[Theorem 3.3]{EhlSkor} shows that the space of invariants can be defined over $\Z$ and \cite[Theorems 5.4 and 5.6]{Zemel2021} gives dimension formulas for the space of invariants in many cases.
For arbitrary discriminant forms \cite{Mueller22} gives a generating set of the space of invariants, which generalises our result, proving a conjecture of Scheithauer.

We apply this description of the invariants explicitly to the orthogonal sum of two rescaled hyperbolic planes.
A hyperbolic plane $U$ is a lattice that is isomorphic to the lattice $\Z^2$ with quadratic form $Q((a,b)) = ab$.
For a positive integer $N$ we denote by $U(N)$ a hyperbolic plane with quadratic form rescaled by $N$.
For a divisor $N'$ of $N$ we consider the lattice $ L = L_{N,N'} = U(N) \oplus U(N')$, which is a lattice of signature \( (2,2) \).
We denote its associated Weil representation by $\varrho_{N,N'}$. 
In order to simplify the presentation here, we stick to the case \(N' = 1\) for now. 
The self-dual subgroups of \(L'/L = (\Z/N\Z)^2 \) are of the form
\[ H_d = \left\{ (x,y) \in (\Z/N\Z)^2 \colon d|x, \frac{N}{d} | y \right\} \]
for divisors \(d\) of \(N\). As a consequence, we give a new proof of  \cite[Lemma 3.2]{Ye2021}:
\begin{Korollar}[cf. Corollary \ref{corBasis}]
	The vectors \( \{ v^{H_d} \colon d|N \} \) form a basis of the space of holomorphic modular forms of weight 0 with respect to \( \varrho_{N,1} \).
\end{Korollar}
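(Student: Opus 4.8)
The plan is to apply Theorem \ref{Satz1} to the concrete discriminant form of $L = H(N) \oplus H(1)$ and then upgrade the resulting spanning set to a basis. Since $H(1)$ is unimodular, the discriminant group is $L'/L = (\Z/N\Z)^2$ with quadratic form $q(x,y) = xy/N \bmod \Z$ and associated bilinear form $b\bigl((x,y),(x',y')\bigr) = (xy' + x'y)/N \bmod \Z$; as the signature $(2,2)$ is even, the Weil representation genuinely factors through $\SLZ$ and the space of weight $0$ holomorphic forms is $\C[L'/L]^{\SLZ}$. This group does contain self-dual isotropic subgroups (for instance $H_1 = (\Z/N\Z)\times\{0\}$), so Theorem \ref{Satz1} applies and shows that $\C[L'/L]^{\SLZ}$ is spanned by the vectors $v^H$ with $H$ self-dual isotropic. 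It therefore remains to do two things: (a) show that the self-dual isotropic subgroups are exactly the $H_d$ for $d \mid N$, and (b) show that the corresponding characteristic functions are linearly independent.

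For (a), I would first check directly that each $H_d$ is self-dual isotropic: it has order $(N/d)\cdot d = N = \sqrt{|L'/L|}$, the form $q$ vanishes on it since $q\bigl(da,(N/d)b\bigr) = ab \in \Z$, and the bilinear form vanishes on $H_d \times H_d$ by the same computation, so $H_d \subseteq H_d^\perp$ and equality follows from the cardinalities. Conversely, let $H$ be any self-dual isotropic subgroup, so $|H| = N$. The key step is to recover $d$ intrinsically: set $\pi_1(H) = d\Z/N\Z$ for the appropriate divisor $d$, where $\pi_1$ is the projection to the first coordinate. Counting gives $|H \cap (\{0\}\times \Z/N\Z)| = N/|\pi_1(H)| = d$, so this intersection equals $\{0\}\times (N/d)\Z/N\Z$. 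Choosing $(d, y_0) \in H$ lying over the generator $d$ of $\pi_1(H)$, isotropy forces $(N/d) \mid y_0$, hence $(0, y_0) \in H$ and therefore $(d,0) \in H$. Together with the intersection already computed this yields $H \supseteq H_d$, and equality follows from $|H| = |H_d| = N$.

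For (b), I would evaluate a hypothetical linear relation $\sum_{d \mid N} c_d v^{H_d} = 0$ on the basis vectors $\ef_{(d,0)}$. Since $(d,0) \in H_{d'}$ if and only if $d' \mid d$, the coefficient of $\ef_{(d,0)}$ is $\sum_{d' \mid d} c_{d'}$, so the relevant incidence matrix is the zeta matrix of the divisibility poset. This matrix is unitriangular with respect to any linear extension of divisibility, hence invertible (equivalently, Möbius inversion recovers all $c_d = 0$), which proves linear independence. Combining (a) and (b) shows that $\{v^{H_d} : d \mid N\}$ is a basis. The only step demanding genuine care is the converse direction of (a): ruling out self-dual isotropic subgroups other than the $H_d$, which is where the projection-and-isotropy argument does the real work. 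Once the self-dual isotropic subgroups are pinned down, spanning is immediate from Theorem \ref{Satz1} and independence is the routine poset computation above.
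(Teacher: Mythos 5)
Your proposal is correct, and its skeleton coincides with the paper's: spanning comes from Theorem \ref{Satz1}, then one classifies the self-dual isotropic subgroups of $(\Z/N\Z)^2$, then proves linear independence. The differences lie in how the two sub-steps are executed. For the classification, the paper first develops a parametrization of \emph{all} subgroups of $(\Z/N\Z)^2$ as $H_{x,y,z} = \langle (x,y),(0,z)\rangle$ (Lemma \ref{lemSubgrD1}), computes $H_{x,y,z}^\bot = H_{N/z,\,-Ny/(xz),\,N/x}$ together with the isotropy conditions, and reads off that the self-dual isotropic subgroups are exactly the $H_{d,0,N/d}$; your argument instead recovers $d$ from $\pi_1(H)$, pins down $H \cap (\{0\}\times \Z/N\Z)$ by counting, and uses isotropy of a lift $(d,y_0)$ to force $(d,0)\in H$. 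Yours is leaner and self-contained for this corollary, while the paper's general lemma is not wasted: it is reused later for co-isotropic subgroups and the $N'>1$ case. For independence, the paper evaluates a relation at the single witness $(d,N/d)$, which lies in $H_{d'}$ if and only if $d'=d$, so each coefficient vanishes immediately; you evaluate at $(d,0)$, which lies in $H_{d'}$ exactly when $d'\mid d$, and then invert the unitriangular zeta matrix of the divisor poset by M\"obius inversion. Both are valid; the paper's choice of witness simply avoids the inversion step.
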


It is well-known that modular forms on orthogonal groups of signature \( (2,2) \) can be realised as modular forms for discrete subgroups of \( \SLR \times \SLR \)  
on \( \HH \times \HH \), where $\HH$ stands for the upper half-plane. 
We obtain the following explicit description of Borcherds products for our lattice \(L\).
\begin{Satz}[cf. Corollary \ref{corN1}]
	Let \(F = \sum_{d|N} \alpha_d v^{H_d} \) be a holomorphic modular form of weight 0 with respect to the Weil representation \( \varrho_{N,1} \) with integer coefficients \(\alpha_d \in \Z\). The Borcherds lift of \(F\), denoted by $\Psi \colon \HH^2 \to \C$, is a modular form of weight \( \frac 1 2 \sum_{d|N} \alpha_d \) with a multiplier system \( \chi\) with respect to \(\Gamma_0(N)^2\). Moreover, the divisor of \( \Psi \) vanishes on \( \HH^2 \) and \(\Psi\) has a product expansion
	\[ \Psi(\tau_1, \tau_2) = C \prod_{d|N} \eta(d\tau_1)^{\alpha_d} \eta(d\tau_2)^{\alpha_d}, \]
	for some non-zero constant \( C \in \C^\times \).
\end{Satz}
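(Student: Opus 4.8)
The plan is to derive everything from Borcherds' lifting theorem \cite[Thm.~13.3]{Bor98}, specialised to the very restrictive input at hand, and then to translate the statement on the orthogonal group into the classical $\HH^2$ picture. First I would record that, by the Corollary above, $F=\sum_{d|N}\alpha_d v^{H_d}$ is the general holomorphic weight-$0$ input, and that as an invariant vector it is constant: its Fourier expansion has no principal part, and the coefficient of $\ef_0$ equals $\sum_{d|N}\alpha_d$ because $0\in H_d$ for every $d$. Feeding this into Borcherds' theorem gives three of the four assertions at once: the weight is $c_0(0)/2=\tfrac12\sum_d\alpha_d$; the Heegner divisor of $\Psi$ is governed by the negative-index coefficients of $F$, which all vanish, so the divisor of $\Psi$ on $\HH^2$ is empty and $\Psi$ is a modular unit; and $\Psi$ is automorphic, with some multiplier system, for the discriminant kernel $\Gamma_L$ of $\SO^+(L)$.

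Next I would fix coordinates. Splitting off the unimodular plane $H(1)$ leaves $K=H(N)$ of signature $(1,1)$, whose attached tube domain is identified with $\HH\times\HH$ via light-cone coordinates on $K\otimes\R$; under the accidental isomorphism $\SO^+(2,2)\cong(\SLR\times\SLR)/\{\pm1\}$ the discriminant kernel $\Gamma_L$ corresponds to $\Gamma_1(N)^2$, which pins down the group and the multiplier system $\chi$. Writing $K=\Z e\oplus\Z f$ with $(e,f)=N$, one has $K'=\tfrac1N\Z e\oplus\tfrac1N\Z f$, whose isotropic vectors are exactly the scalar multiples of $e$ and of $f$; and with $Z=z_1 e+z_2 f$ the pairings $(\tfrac aN e,Z)=a z_2$ and $(\tfrac bN f,Z)=b z_1$ show that each isotropic ray contributes to a single $\HH$-factor.

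Since the Borcherds lift is multiplicative in the input, $\Psi_F=C\prod_d \Psi_{v^{H_d}}^{\alpha_d}$, I would reduce to $F=v^{H_d}$ and evaluate the infinite product. Because the only nonzero Fourier coefficients of $v^{H_d}$ are $c_\gamma(0)=1$ for $\gamma\in H_d$, the surviving exponents $c_\lambda(\lambda^2/2)$ come only from positive norm-zero $\lambda\in K'$ whose class lies in $H_d$, each with exponent $1$; these are $\lambda=\tfrac aN e$ with $d\mid a$ and $\lambda=\tfrac bN f$ with $\tfrac Nd\mid b$. Summing over the two rays turns the product into $\prod_{m\ge1}(1-e^{2\pi i\, d m z_2})\prod_{m\ge1}(1-e^{2\pi i\,(N/d) m z_1})$, while the Weyl vector supplies the prefactor matching the $e^{2\pi i\,(\cdot)/24}$ leading terms of the $\eta$'s. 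Recognising the two factors as $\eta(d z_2)$ and $\eta(\tfrac Nd z_1)$, then raising to the power $\alpha_d$ and multiplying over $d$, yields an eta-product up to the normalising constant $C$.

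The main obstacle is the final matching, namely choosing the isomorphism $\HH^2\to(\text{tube domain})$ so that the group is exactly $\Gamma_1(N)^2$ and the eta-arguments come out symmetrically as $d\tau_1$ and $d\tau_2$. The naive light-cone coordinates produce the asymmetric $\eta(\tfrac Nd z_1)\eta(d z_2)$, reflecting that $H_d$ treats the two isotropic rays by $d$ and $N/d$; to symmetrise one absorbs the scaling by a Fricke-type involution $z_1=-1/(N\tau_1)$ in the first variable, under which $\eta(\tfrac Nd z_1)=\eta(-1/(d\tau_1))=\sqrt{-i d\tau_1}\,\eta(d\tau_1)$, so that the product becomes $\eta(d\tau_1)\eta(d\tau_2)$ up to an automorphy factor and a $d$-dependent constant $\sqrt d^{\,\alpha_d}$ that are bookkept into $\chi$ and $C$. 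Getting this normalisation, the exact Weyl vector (hence the leading $q$-powers and the constant $C$), and the precise multiplier system to agree is the part requiring genuine care; convergence and holomorphy are already guaranteed by Borcherds' theorem and by the non-vanishing of $\eta$ on $\HH$.
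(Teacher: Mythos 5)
Your proposal is correct in substance and follows the same overall strategy as the paper---specialize Borcherds' Theorem 13.3 to a holomorphic weight-$0$ input (so the divisor on $\HH^2$ vanishes and the weight is $c_0/2=\tfrac12\sum_d\alpha_d$), compute the product expansion of the lift of each $v^{H_d}$, and use that the lift turns sums into products---but you make the \emph{opposite} choice of splitting, and this is where the two arguments genuinely diverge. The paper splits off the scaled plane $H(N)$ (taking $\kappa=e_2$, $\kappa'=\tfrac1N e_1$), so that $K=H(N')=H(1)$ is unimodular. Then $K'/K=0$, both factors $\psi_1,\psi_2$ of the split product in Corollary \ref{corBP2g} carry the \emph{same} coefficients $c_{(0,x,0,\cdot)}=c_{(0,x,\cdot,0)}=c_{(0,x)}$, and the root-of-unity product $\prod_{x\,:\,(N/d)\mid x}\left(1-\zeta_N^{x}q^{\lambda}\right)=1-q^{d\lambda}$ collapses each factor directly to $\eta(d\tau_i)$: the symmetric form of the statement appears with no further work (this is Proposition \ref{exLiftBasis} with $d_1=d$, $d_2=N/d$, $d_3=d_4=1$). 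You instead split off the unimodular $H(1)$, so $K=H(N)$ and $K'/K\cong L'/L$; the exponents are then read off from the two isotropic rays in $K'$, which $H_d$ weights by $d$ and $N/d$ respectively, and you land on the asymmetric $\eta(\tfrac Nd z_1)\eta(dz_2)$, forcing the extra Fricke symmetrization (your Weyl-vector matching is also only asserted, where the paper computes it). Both routes prove the theorem, but the paper's choice of $\kappa$ buys exactly the symmetry that your last paragraph has to restore by hand.

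One step of your symmetrization must be stated more carefully: the factor $\prod_d\left(\sqrt{-i\,d\tau_1}\right)^{\alpha_d}=(-i\tau_1)^{k}\prod_d d^{\alpha_d/2}$, with $k=\tfrac12\sum_d\alpha_d$, is \emph{not} constant and cannot be ``bookkept into $\chi$''---a multiplier system takes constant unimodular values. The correct bookkeeping is that changing the identification of the tube domain with $\HH^2$ by the Fricke element $\sigma=(W_N,1)$, $W_N=\tfrac1{\sqrt N}\sMa 0{-1}N0$, transports $\Psi$ to the slashed form $\Psi|_k\sigma$, whose very definition carries the factor $j(\sigma,\cdot)^{k}$ cancelling $\tau_1^{k}$; only the constant $\prod_d d^{\alpha_d/2}$ survives and goes into $C$. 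Since conjugation by $W_N$ sends $\sMa abcd$ to $\sMa d{-c/N}{-Nb}a$ and hence preserves $\Gamma_1(N)$, modularity with some multiplier for $\Gamma_1(N)^2$ survives this transport, and your argument then goes through. (Also, your claim that $\Gamma_L$ ``corresponds to $\Gamma_1(N)^2$'' is not exact: the discriminant kernel is the larger group cut out by $a_1a_2\equiv 1 \bmod N$; this is harmless, since the theorem only asserts modularity for the subgroup $\Gamma_1(N)^2$.)
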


The general case (\(N' \neq 1 \)) yields similar results. However, there is one main difference to the \(N' = 1\) case, namely, the characteristic functions of self-dual isotropic subgroups are not linearly independent in general. The lift of linear relations among the characteristic functions gives rise to identities between the corresponding \(\eta\)-quotients.
In the case $N=N'=p$ for some prime number $p$, there is a single relation between the characteristic functions of self-dual isotropic subgroups of $D_{p,p}$. Lifting this relation yields the following result.  

\begin{Satz}[cf. Corollary \ref{thmEta}]
	Let \(p\) be a prime number. We have
	\[ \prod_{a = 1}^{p-1} \eta\left( \tau + \frac{a}{p} \right) = \exp\left(\frac{ 2\pi i(p-1)}{48}\right) \frac{\eta(p\tau)^{p+1}}{\eta(\tau) \eta(p^2 \tau)}.\]
\end{Satz}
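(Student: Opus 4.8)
The plan is to realize this identity as the Borcherds lift of a linear relation among the characteristic functions of self-dual isotropic subgroups, exactly as announced for the general case $N' \neq 1$ in the introduction. The shape of the right-hand side, with arguments $\tau, p\tau, p^2\tau$, points to the divisors $1, p, p^2$ of $N = p^2$, while the translates $\tau + a/p$ on the left, running over $a = 1, \dots, p-1$, point to the scaling $N' = p$ of the second hyperbolic plane; accordingly the relevant lattice is $L = H(p^2) \oplus H(p)$ of signature $(2,2)$, with discriminant group $(\Z/p^2\Z)^2 \oplus (\Z/p\Z)^2$. I would first enumerate the self-dual isotropic subgroups $H$ of $L'/L$. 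By Theorem \ref{Satz1} their characteristic functions $v^H$ span $\C[L'/L]^{\SLZ}$, but since for $N' \neq 1$ they are linearly dependent, there is a nontrivial relation $\sum_H \alpha_H\, v^H = 0$; the task is to isolate the single relation whose two sides correspond to the two $\eta$-quotients in the statement.

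The structural engine is the multiplicativity of the Borcherds lift: the exponents in the infinite product depend linearly on the Fourier coefficients of the input, so $\Psi_{F + G} = \Psi_F \cdot \Psi_G$ up to a nonzero constant. Applying this to the relation yields
\[ \prod_H \Psi_{v^H}^{\,\alpha_H} = C, \]
a nonzero constant, since the lift of the zero input has empty divisor on $\HH^2$ and weight $0$. I would then compute each factor $\Psi_{v^H}$ explicitly in the $\HH^2$-realization, generalizing Corollary \ref{corN1} to $N' \neq 1$: each lift is a symmetric $\eta$-quotient of the form $A(\tau_1)\,A(\tau_2)$, so after separating the two variables the single group-ring relation collapses to the claimed one-variable identity relating $\prod_{a=1}^{p-1}\eta(\tau + a/p)$ and $\eta(p\tau)^{p+1}/(\eta(\tau)\eta(p^2\tau))$. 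Matching of weights is immediate, as both sides have weight $(p-1)/2$.

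The main obstacle is the explicit bookkeeping in two places. First, I must determine the shifted $\eta$-quotient attached to each $v^H$ correctly, tracking how the $N' = p$ factor converts the isotropy data of $H$ into the translates $\tau + a/p$; this is where the combinatorics of the self-dual subgroups enters and where sign and indexing errors are easiest to commit. Second, the constant must be pinned down: each $\eta$ contributes the prefactor $\exp(\pi i \tau/12)$, and collecting the number parts over the $p-1$ translates gives $\exp\!\big(\tfrac{\pi i}{12p}\sum_{a=1}^{p-1} a\big) = \exp\!\big(\tfrac{\pi i (p-1)}{24}\big)$, which is precisely the stated $\exp((p-1)/48)$ under the convention $\exp(x) = e^{2\pi i x}$; the $\tau$-dependent leading exponents cancel because $p(p+1) - 1 - p^2 = p-1$. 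Since both sides are holomorphic and non-vanishing on $\HH$, the identity is determined by this constant together with the matching of the product (non-constant) parts, and comparing one further Fourier coefficient provides an independent check of the computation.
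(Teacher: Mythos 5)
Your proposal runs on the same engine as the paper's proof: lift a nontrivial linear dependence among characteristic functions of self-dual isotropic subgroups, use that the Borcherds lift turns sums into products so the lifted relation is a nonzero constant, separate the variables $\tau_1,\tau_2$ to force each side to be constant, and pin the constant down from the leading Fourier coefficient (your computation $\sum_a a/(24p) = (p-1)/48$ and the exponent cancellation $p(p+1)-1-p^2 = p-1$ agree exactly with the paper's). The two real differences are these. First, the lattice: you take $H(p^2)\oplus H(p)$, guided by reading the arguments $\tau, p\tau, p^2\tau$ as divisors of $N=p^2$; the paper works with the smaller lattice $L_{p,p}=H(p)\oplus H(p)$, where $\eta(p^2\tau_1)$ arises not from a divisor $p^2$ but from the lift of $H_{p,0,1}\oplus H_{1,0,p}$, whose $\tau_1$-argument is $d_1 d_4 \tau_1 = p^2\tau_1$ in Proposition \ref{exLiftBasis}. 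Your choice is viable -- the paper's closing remark notes that the relations in $\mathcal{M}_{0,D_{N,p}}$ for every $N$ divisible by $p$ lift to the same identity, and with $N=p^2$, $N'=p$, $d'=1$ the lifted factors come out identical to the paper's -- but it buys nothing and costs a larger enumeration, since Proposition \ref{propInvP} with $N=N'=p$ already hands you the unique relation. Second, and this is the one assertion you would have to retract when doing the bookkeeping you yourself flag as the danger point: for $N'\neq 1$ the lifts are \emph{not} symmetric $\eta$-quotients $A(\tau_1)A(\tau_2)$; by Proposition \ref{exLiftBasis} they have the form $A(\tau_1)B(\tau_2)$ with genuinely different arguments in the two variables, e.g. $\eta(p\tau_1)\eta(\tau_2/p)$ and $\eta(p\tau_1)\eta(\tau_2/p - a^{-1}/p)$; symmetry holds only for $N'=1$ (Corollary \ref{corN1}), and that case cannot produce the translates $\tau+a/p$ at all. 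The consequence is that the separated equation does not "collapse" to one identity appearing twice: the $\tau_1$-side is the claimed identity, while the $\tau_2$-side is a rescaled (equivalent) variant involving $\eta(\tau_2/p - a^{-1}/p)$. Since each side depends on only one variable, both are constant, and the $\tau_1$-side gives the theorem -- this is precisely how the paper's proof of Corollary \ref{thmEta} concludes, so your argument survives the correction intact.
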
 

This identity is well-known for \(p=2\), see for example \cite{Eta}, but for $p \geq 3$ the result seems to be new.
Identities between eta quotients are also studied in particular in relation to theta functions and theta derivatives. For example, \cite{Oliver2013}, \cite{Matsuda2016} and  \cite{Zemel2019} give various constructions of such identities. \cite{Cooper2017} treats various types of relations between theta functions.

This paper is organised as follows. In Section 2, we introduce the objects that we need to formulate Borcherds' theorem, namely the Weil representation, vector valued modular forms, and modular forms for orthogonal groups.
In Section 3, we study invariants for the Weil representation and prove Theorem \ref{Satz1}.
In Section 4, we explicitly determine self-dual isotropic subgroups in the discriminant group of the lattice $L_{N,N'}$, which we use to construct Borcherds products in Section 5.

\section{Preliminaries}

We briefly review the notions necessary to formulate Borcherds' theorem.
We denote the ordinary upper half plane $\{ \tau \in \C \colon \im \tau > 0 \}$ by $\HH$, and use the usual conventions \( \eb(z) = \exp(2 \pi i z) \) for \(z \in \C\) and \(q = \eb(\tau)\) for \( \tau \in \HH\), and similarly \(q_1 = \eb(\tau_1)\) and \(q_2 = \eb(\tau_2) \) for $(\tau_1, \tau_2) \in \HH \times \HH$.

\subsection{Weil Representations and Vector Valued Modular Forms}

Vector valued modular forms for the Weil representation of the metaplectic group $\MPZ$ play an important role in the theory of Borcherds products as they serve as input to the theta lift.
We follow \cite{JHB} and \cite{Scheit}.

Recall that $\MPR$ is the \emph{metaplectic group}, i.e. the double cover of $\SLR$ realised by the two choices of holomorphic square roots of $\tau \mapsto (c\tau + d)$ for $\sMa a b c d \in \SLR$.
More formally, elements of $\MPR$ are pairs $(M, \phi )$, where $M = \sMa a b c d$ is an element of $\SLR$ and $\phi \colon \HH \to \C$ is a holomorphic function such that $\phi(\tau)^2 = c\tau + d$.
Let $\MPZ$ be the inverse image of $\SLZ$ under the covering map $\MPR \to \SLR$.
It is well-known that $\MPZ$ is generated by the two elements
$$ T = \left( \Ma 1 1 0 1 , 1 \right) \qquad \text{ and } \qquad S = \left( \Ma 0 {-1} 1 0, \sqrt{\tau} \right).$$

A \emph{discriminant form} (or finite quadratic module) is a finite abelian group $D$ together with a non-degenerate quadratic form $Q \colon D \to \Q/\Z$. We denote by $B \colon D \times D \to \Q/\Z$ its associated bilinear form. 
The \emph{level} of a discriminant form is the smallest positive integer $N$ such that $N Q(\gamma) = 0 $  in $\Q/\Z$ for all $\gamma \in D$.
We denote by $\sign (D) \in \Z/8\Z$ the signature of $D$.
We denote the group ring associated with $D$ by $\C[D]$ and use $\mathfrak{e_\gamma}$ for $\gamma \in D$ as its standard basis.
\begin{Definition} 
	The \emph{Weil representation} $\varrho_D$ attached to a discriminant form $(D, Q)$ is defined as the representation
	\[\varrho_D \colon \MPZ \to \GL( \C[D]), \]
	such that for all $\gamma \in D$:
	\begin{align*}
		\varrho_D(T) \mathfrak{e}_\gamma &= \mathbf{e} (Q(\gamma)) \mathfrak{e}_\gamma \\
		\varrho_D(S) \mathfrak{e}_\gamma &= \frac{\eb(-{\sign(D)}/8)}{\sqrt{|D|}} \sum_{\beta \in D} \mathbf{e}(-B(\beta, \gamma)) \ef_\beta .
	\end{align*}
\end{Definition}
It is well known that if $\sign(D)$ is even, the representation $\varrho_D$ factors through $\SLZ$ and even $\SL(\Z/N\Z)$, where \(N\) is the level of \(D\).

We use the Weil representation to define vector valued modular forms in the following way.
For $k \in \frac 1 2 \Z$, a $\C[D]$-valued function $F$ on $\HH$, and $(M, \phi) \in \MPR$ we define the Petersson slash operator by
$$ ( F|_{k,D} (M, \phi))(\tau) = \phi^{-2k}(\tau) \varrho_D^{-1}(M, \phi) F(M\tau).$$

\begin{Definition}
	\index{modular form!for the Weil representation}
	A \emph{weakly holomorphic modular form} of weight $k$ with respect to $\varrho_D$ (and $\MPZ$) is a holomorphic function $F \colon \HH \to \C[D]$ such that
	\begin{enumerate}
		\item $F|_{k,D} (M, \phi) = F$  for all $(M, \phi) \in \MPZ$ and
		\item $F$ is meromorphic at $\infty$, that is, $F$ has a Fourier expansion of the form 
		$$F(\tau)= \sum_{\gamma \in D} \sum_{\substack{n \in \Z + Q(\gamma) \\ n >> -\infty}} c_\gamma(n) q^n \ef_\gamma.$$
	\end{enumerate}
	We say that $F$ is a \emph{holomorphic modular form} if $F$ is in addition holomorphic in $\infty$, that is if all Fourier coefficients with negative indices vanish.
	We denote the space of holomorphic modular forms of weight $k$ with respect to $\varrho_D$ and $\MPZ$ by $\mathcal{M}_{k,D}$.
\end{Definition}

Later we will be interested, in particular, in the space of holomorphic modular forms of weight 0, as we use them as inputs for Borcherds' lift.
The component functions of such holomorphic weight 0  forms  are elliptic modular forms for $\Gamma(N)$, so that in particular they are constant.
Thus, holomorphic modular forms of weight 0 with respect to $\varrho_D$ are constant vectors that are invariant under the operation of the Weil representation of $\SLZ$. 
We denote the space of invariant vectors under the Weil representation by $\C[D]^{\SLZ}$. 

\subsection{Modular Forms for Orthogonal Groups}
\label{secModOrth}
We recall the notions of orthogonal modular forms in the case of signature $(2,n)$ and focus in particular on the case $n = 2$.
Orthogonal modular forms for discriminant kernels $\Gamma_L$ of lattices $L$ arise as the result of the Borcherds lift.
We follow \cite{JHB} and \cite{123}.

Let $(V, Q)$ be a non-degenerate quadratic space over $\Q$. We denote its associated bilinear form by $B$. We write $V_\R$ and $V_\C$ for $V \otimes_\Q \R$ and $V \otimes_\Q \C$ respectively, and assume that $V_\R$ has signature $(2,n)$. 
We denote by $\OO^+(V_\R)$ the subgroup of $\OO(V_\R)$ of elements whose spinor norm equals their determinant. Then $\SO^+(V_\R) = \OO^+(V_\R) \cap \SO(V_\R)$ is the identity component of $\OO(V_\R)$.

It is well-known that in this case $\OO(V_\R)/K$, for a maximal compact subgroup $K$ of $\OO(V_\R)$, is a hermitian symmetric domain. 
We can realise $\OO(V_\R)/K$ as a tube domain in the following way. We can split a hyperbolic plane $U_\R = U \otimes_\Z \R$ from $V_\R$ as $V_\R = U_\R \oplus V_1$. Then $V_1$ is a real quadratic space of signature $(1,n-1)$. We denote by $C = \{ v \in V_1 \colon Q(v) > 0\}$ the set of positive norm vectors in $V_1$. It has two components, and we fix one component $C^+$ in $C$ and set $\mathcal{K}' = V_1 + iC$ and $\mathcal{H}' = V_1 + iC^+$. It is well-known that $\mathcal{H}'$ is a realisation of the hermitian symmetric domain $\OO(V_\R)/K$. 

It is also convenient to realise $\OO(V_\R)/K$ inside the projective space $\PP(V_\C)$ as follows. Let $\mathcal{N} = \{ [Z] \in \PP(V_\C) \colon Q(Z) = 0\}$ be the zero quadric in $ \PP(V_\C)$. We set $\mathcal{K} = \{ [Z] \in \mathcal{N} \colon B(Z, \bar{Z}) > 0\}$, it is an analytically open subset of $\mathcal{N}$.
It consists of two components that are preserved by $\OO^+(V_\R)$. 
In order to define a map between the two realisations, we choose a basis $\kappa, \kappa'$ of $U$ such that $\kappa$ is isotropic and $B(\kappa, \kappa') = 1$. Then the map
$$ Z \mapsto [Z + \kappa' -(Q(Z) + Q(\kappa')) \kappa] $$
biholomorphically identifies $\mathcal{K}'$ with $\mathcal{K}$ and $\mathcal{H}'$ with a component of $\mathcal{K}$, which we denote by $\mathcal{H}$.  

Moreover, it is well-known that in signature $(2,2)$ the domain $\mathcal{H}$ can be identified biholomorphically with $\HH \times \HH$, the product of two upper half planes. For this, we choose a basis $\tilde{\kappa}$, $\tilde{\kappa}'$ of isotropic vectors of $V_1$ such that $B(\tilde \kappa, \tilde \kappa') = 1$. Then $C = \{ (a,b) \colon a,b > 0\} \cup \{ (a,b) \colon a,b < 0\}.$ We choose the component with positive entries for the definition of $\mathcal{H}'$ and obtain
$$ \mathcal{H}' = \{ (\tau_1, \tau_2) \in V_1 \otimes \C \cong \C ^2 \colon \im(\tau_1), \im(\tau_2) > 0\} \cong \HH \times \HH.$$
In addition, in signature $(2,2)$ the double cover $\Spin(V_\R)$ of $\SO^+(V_\R)$ can be identified with $\SLR^2 = \SLR \times \SLR$ and the action of $\SLR \times \SLR$ on $\HH \times \HH$ under these identifications is given by componentwise Möbius transformations.

Let $L$ be an even lattice of signature $(2,n)$. We set $V = L \otimes_\Z \Q$. Let $\Gamma \subset \OO^+(V)$ be a subgroup commensurable with $\SO^+(L)$. 
It is well-known that the quotient $\Gamma \backslash \mathcal{H}$ is a quasi-projective variety which is compact if and only if $V$ is anisotropic.

We briefly sketch how to compactify the quotient in the case where $V$ contains a two-dimensional isotropic subspace using the Satake-Baily-Borel compactification.
To this end, we consider the boundary of $\mathcal{H}$ in $\mathcal{N}$. 
For a real isotropic vector $Z \in V_\R$, the point $[v] \in \PP(V_\C)$ is in the boundary of $\mathcal{H}$ and we call $\{[v]\}$ a zero-dimensional boundary component or special boundary point. 
For a two-dimensional isotropic subspace $F$ of $V$, we call the set of all non-special boundary points which can be represented by elements of $F \otimes \C$ the one-dimensional boundary component associated with $F$. 
Then the boundary of $\mathcal{H}$ is the disjoint union of the boundary components and the set of boundary components is in bijection to the set of non-zero isotropic subspaces of $V_\R$. 
We call a boundary component \emph{rational} if it is defined over $\Q$. 
We refer to rational zero-dimensional boundary components as \emph{cusps}.
The one-dimensional boundary components are biholomorphic to $\HH$.
We denote the union of $\mathcal{H}$ with all rational boundary components by $\mathcal{H}^*$.
We equip $\mathcal{H}^*$ with the Satake-Bailey-Borel topology.
Then the quotient 
$ X(\Gamma) = \Gamma \backslash \mathcal{H^*}$
is compact and $X(\Gamma)$ carries the structure of a projective variety which contains $\Gamma \backslash \mathcal{H}$ as an open subvariety.
In the case of signature $(2,2)$ the rational cusps of $\mathcal{H} \cong \HH \times \HH$ are points in $\PP^1(\Q)\times \PP^1(\Q)$ and the one-dimensional rational boundary components are of the form $\{ a\} \times \HH$ or $\HH \times \{ a\}$ for $a \in \PP^1(\Q)$.

We now define orthogonal modular forms in our setting in signature $(2,2)$. For a general definition, we refer to \cite{JHB} and \cite{123}. 
For $M_1, M_2 \in \SLR$ with $M_1 = \sMa {a_1} {b_1} {c_1} {d_1}$ and $M_2 = \sMa {a_2} {b_2} {c_2} {d_2}$ we denote the \emph{automorphy factor}
$$j\left( (M_1, M_2) , (\tau_1, \tau_2) \right) = (c_1 \tau_1 + d_1) (c_2 \tau_2 + d_2).$$
We fix a holomorphic logarithm $\mathrm{Log}(j\left( (M_1, M_2) , (\tau_1, \tau_2) \right)) $ of the automorphy factor and for $r \in \Q$ we set
$$ j\left( (M_1, M_2) , (\tau_1, \tau_2) \right)^r = \exp(r \cdot \mathrm{Log}(j\left( (M_1, M_2) , (\tau_1, \tau_2) \right)) )$$
for $M_1, M_2 \in \SLR$ and $\tau_1, \tau_2 \in \HH$. Then $\frac{j(\sigma_1 \sigma_2, \tau)^r}{j(\sigma_1, \sigma_2 \tau)^r j(\sigma_2, \tau)^r}$ is a root of unity of order bounded by the denominator of $r$. A \emph{multiplier system} of weight $r$ with respect to $\Gamma$ is a map
$ \chi \colon \Gamma \to \lbrace t \in \C \colon |t| = 1 \rbrace$
sastisfying
$$ \chi(\sigma_1 \sigma_2) = \frac{j(\sigma_1 \sigma_2, \tau)^r}{j(\sigma_1, \sigma_2 \tau)^r j(\sigma_2, \tau)^r}  \chi(\sigma_1) \chi(\sigma_2)$$
for all $\sigma_1, \sigma_2 \in \Gamma$ and $\tau \in \mathcal H$. 
If $r \in \Z$, then the multiplier system $\chi$ is a character.

\begin{Definition}
	Let $\Gamma \subset \SLR \times \SLR$ be commensurable with $\SO^+(L)$.
	A (meromorphic) modular form  of weight $r$ and multiplier system $\chi$ with respect to $\Gamma$ is a meromorphic function $f$ on $\HH \times \HH$ with the transformation property
	$$f(M_1 \tau_1, M_2 \tau_2) = j\left( (M_1, M_2) , (\tau_1, \tau_2) \right)^{r}  \chi(M_1, M_2) f(\tau_1, \tau_2)$$
	for $M_1, M_2 \in \Gamma$.
\end{Definition}

For a lattice $L$ we denote by $L'$ its \emph{dual lattice}, i.e. $L' = \{ x \in L \otimes \Q \colon B(x,y) \in \Z \text{ for all } y \in L\}$. The quotient $L'/L$ is a discriminant form, called the \emph{discriminant group} of $L$. 
We are  interested, in particular, in modular forms for the \emph{discriminant kernel} $\Gamma = \Gamma_{L}$ of $L$, that is, the subgroup of $\SO^+(L)$ that acts trivially on the discriminant group $L'/L$. It is a subgroup of finite index in $\SO^+(L)$.

\subsection{Borcherds' Lift for $L_{N,N'}$}
\label{secLift}

Recall that we defined for a positive integer $N$ and a divisor $N'|N$ the lattice $L = L_{N,N'} = U(N) \oplus U(N')$ as the orthogonal sum of two rescaled hyperbolic planes over $\Z$.
We state a special case of Borcherds' theorem \cite[Theorem 13.3]{Bor98} for $L_{N,N'}$. We present only what is strictly necessary in the following and refer to \cite{Bor98} and \cite{JHB} for the general statement, further details, and proofs. \cite{Ye2021} considers Borcherds products for the lattice $L_{N,1}$ in a different setting.

More precisely, as abelian groups $L \cong \Z^4$ and the quadratic form is given by $Q(w,x,y,z) = Nwx + N'yz$. In particular, $L$ has signature $(2,2)$.
Let $V = L \otimes_\Z \Q$ be the associated rational vector space, and we denote by $V_\R$ and $V_\C$ be the corresponding real and complex vector spaces.
The discriminant group of $L$ is then given by $L'/L \cong (\Z/N\Z)^2 \oplus (\Z/N'\Z)^2$.
We denote the Weil representation associated with $L_{N,N'}$ 
by $\rho_{N,N'}$.

Note that the lattice $L$ can be realised as a lattice of certain integral $(2 \times 2)$-matrices via
\begin{align}
	L & \rightarrow M_2(\Z) \nonumber \\
	(w,x,y,z) & \mapsto \begin{pmatrix} z & x \\ -Nw & N'y \end{pmatrix}
	\label{mapIden}
\end{align}
with its quadratic form given by the determinant. This realisation then identifies $V$ with the vector space of rational $2 \times 2$-matrices. Under these identifications, the action of $\mathrm{Spin}(V_\R) \cong \SLR \times \SLR$ on $V_\R \cong M_2(\R)$ is then given by $((M_1, M_2), X) \mapsto M_1 X M_2^{-1}$ for all $X \in V_\R$ and $(M_1, M_2) \in \SLR \times \SLR$.

\begin{Proposition}
	The above identification $\mathrm{Spin}(V_\R)  \cong \SLR \times \SLR$ induces an isomorphism
	\begin{equation}
		\Gamma_L \cong \left\{ \left(
	\begin{pmatrix}
		a_1 & b_1 \\
		NN'c_1 & d_1
	\end{pmatrix},
	\begin{pmatrix}
		a_2 & N'b_2 \\
		Nc_2 & d_2
	\end{pmatrix}
	\right) \in \SLZ^2
	\colon 
	\begin{array}{l}
		a_i, b_i,c_i, d_i \in \Z, \ i = 1,2 \\
		a_1 a_2  \equiv 1 \textit{\emph{ mod }} N, \\
		a_1 d_2 \equiv 1 \textit{\emph{ mod }} N'
	\end{array}
	\right\}/ \{\pm 1\}.
	\label{eqnGammaL}
	\end{equation}
\end{Proposition}
Note that the representation of $\Gamma_L$ really depends on the choice of realization of $L$. Different choices of realisations of $L$ inside $M_2(\R)$ yield realisations of $\Gamma_L$ that are conjugate to the one given in the proposition.
\begin{proof}
	Let $(M_1, M_2) = (\begin{psmallmatrix}
		a_1 & b_1 \\
		c_1 & d_1
	\end{psmallmatrix}, \begin{psmallmatrix}
		a_2 & b_2 \\
		c_2 & d_2
	\end{psmallmatrix} ) \in \SLR \times \SLR$. Then $(M_1, M_2)$ operates trivially on $L'/L$ if and only if it changes any of the standard basis vectors of $L'$ by an element of $L$. We evaluate the operations on the standard basis vector of $L'$ and obtain e.g. for $X =  \sMa {\frac{1}{N'}} 0 0 0$ that
	$ M_1XM_2^{-1} = \frac{1}{N'} \begin{psmallmatrix}
			{a_1 d_2} & {-a_1 b_2} \\
			c_1 d_2 & -c_1 b_2
	\end{psmallmatrix}.$
	Hence, we get the conditions $a_1d_2 \equiv 1 \  \mathrm{ mod } \ N'$, $a_1b_2 \in \Z$, $c_1d_2 \in NN'\Z$, $c_1b_2 \in (N')^2 \Z$, and similar conditions for the other entries. Together with $\det M_2 = 1$ these conditions yield 
	$ a_1^2 = (a_1 a_2) (a_1 d_2 ) - (a_1 c_2)(a_1 b_2) \in \Z.$
	Similarly, we get 
	$b_1^2 \in \Z, \ c_1^2 \in (NN')^2 \Z, \ d_1^2 \in \Z,  a_2^2 \in \Z, \ b_2^2 \in (N')^2 \Z, \ c_2^2 \in N^2 \Z,$ and $d_2^2 \in \Z.$
	In particular, we can write $a_1 = \sqrt{\mu} \bar{a}_1$ for some square-free positive integer $\mu$ and $\bar{a}_1 \in \Z$. Together with  conditions of the form $a_1 d_2 \in \Z$ we see that 
	$d_2 = \sqrt{\mu} \bar{d}_2$. In a similar fashion we see that all entries of $M_1$ and $M_2$ have to be a product of $\sqrt{\mu}$ and an integer. Hence, $\mu$ divides $\det M_1$, so $\mu = 1$. In other words, all entries are integral and of the form of (\ref{eqnGammaL}).
	
	In order to see that the conditions of (\ref{eqnGammaL}) are already sufficient, we note that together with $\det M_1 = \det M_2 = 1$ the condition $a_1 a_2 \equiv 1 \ \mathrm{mod} \ N$ implies that $d_1 d_2 \equiv \ \mathrm{mod} \ N$, and  $a_1 d_2 \equiv 1 \ \mathrm{mod} \ N'$ implies that $d_1 a_2 \equiv 1 \ \mathrm{mod} \ N'$.
\end{proof}

We choose a primitive isotropic vector $\kappa \in L$, as well as $\kappa' \in L'$ such that $\kappa$ and $N\kappa'$ form a basis for the first factor of $L$ and $B(\kappa, \kappa') = 1$ .
We set $K = L \cap \kappa^\bot \cap (\kappa')^\bot$. In particular, $K \cong U(N')$ and  $K$ has signature $(1, 1)$. 
For $v \in V_\R$ we write $v_K$ for its orthogonal projection to $K \otimes \R$. 
We consider the sub-lattice
$L_0' = \left\{ \lambda \in L' \colon \ B(\lambda, \kappa) \equiv 0  \mod N \right\}.$
Then the orthogonal projection 
induces a surjection $p \colon L_0'/L \to K'/K$. Note that in general one has to slightly modify the orthogonal projection to get the desired projection $p$. In our case, the two definitions coincide. 

We explicitly choose $\kappa$ to be the second standard basis vector $e_2$ of $L$ (corresponding to the matrix $\sMa{0}{1}{0}{0}$ under the identification (\ref{mapIden})) and $\kappa' = \frac{1}{N} e_1$ (corresponding to $\sMa{0}{0}{1}{0}$). Then $L_0 = (N\Z) \oplus \Z^3$ and the projection $L_0'/L \cong 0 \oplus (\Z/N\Z) \oplus (\Z/N'\Z)^2 \to  K'/K$ is the projection onto $(\Z/N'\Z)^2$.

As the next step, we calculate the Weyl vector associated with $\kappa$ and a holomorphic modular form $F$ of weight $0$ for $\varrho_{N,N'}$. We do not give the general definition of Weyl vectors, but refer to \cite[Section 10]{Bor98} and only use an explicit formula of \cite[Theorem 10.4]{Bor98} to calculate the Weyl vector in our case.
Let 
$ F = \sum_{\lambda \in L'/L} c_\lambda \mathfrak{e_\lambda}.$
We set 
$ F_K = \sum_{\gamma \in K'/K} d_\gamma \mathfrak{e}_\gamma \in \C[K'/K]$ with components
$ d_{\gamma} = \sum_{\lambda \in L_0'/L, \ p(\lambda) = \gamma} c_\lambda$.
We choose a primitive isotropic vector $\tilde{\kappa} \in K$, as well as $\tilde{\kappa}' \in K'$ such that $( \tilde\kappa, \tilde\kappa') = 1$, and consider the sublattice
$K_0' = \left\{ \gamma \in K' \colon \ (\gamma, \tilde{\kappa}) \equiv 0  \mod N' \right\}.$ Explicitly, we take $\tilde{\kappa} = e_4$ (corresponding to $\sMa{1}{0}{0}{0}$) and $\tilde{\kappa}' = \frac{1}{N'} e_3$  (corresponding to $\sMa{0}{0}{0}{1}$). Then $K_0'/K \cong 0 \oplus (\Z/N'\Z)$.
\begin{Proposition}
	\label{WeylVect}
	The \emph{Weyl vector} as defined in \cite[Section 10]{Bor98} is the vector $\rho(K, F_K) = \rho_{\tilde{\kappa}'} \tilde{\kappa}' + \rho_{\tilde{\kappa}} \tilde{\kappa} \in K \otimes \R$ with
	$$ \rho_{\tilde{\kappa}'} = \frac 1 {24} \sum_{\beta_2 \in \Z/N'\Z} \sum_{\beta_1 \in \Z/N\Z} c_{(0,\beta_1,0,\beta_2)}\quad \text{ and } \quad \rho_{\tilde \kappa} = \frac 1 {24N'} \sum_{\beta_2 \in \Z/N'\Z} \sum_{\beta_1 \in \Z/N\Z} c_{(0,\beta_1,\beta_2,0)} .$$
\end{Proposition}

\begin{proof} 
	Note that $K$ has rank 2 and thus the Weyl vector only has two components.
	By \cite[Theorem 10.4]{Bor98} (note that the lattice $K$ in Borcherds' notation is the zero lattice here), we have $\rho_{\tilde{\kappa}'} = \frac{1}{24} \sum_{\gamma \in K_0'/K} d_{\gamma}$. This gives the formula for $\rho_{\tilde{\kappa}'}$.
	
	In order to compute $\rho_{\tilde \kappa }$, we use that the Weyl vector is independent of the choice of $\tilde \kappa$ and $\tilde \kappa'$. 
	Swapping the roles of $\tilde{\kappa}$ and $\tilde{\kappa}'$ for the calculation, more precisely, choosing $\tilde{\kappa}_1 = e_3 = N'\tilde{\kappa}'$ and $\tilde{\kappa}_1' = \frac{1}{N'} e_4 = \tilde{\kappa}/N'$ yields
	$ \rho_{\tilde{\kappa}} = \frac{1}{N'}\rho_{\tilde \kappa'_1}$ as claimed above.
\end{proof}

We can now formulate Borcherds' Theorem in this setting. 
\begin{Satz}[compare {\cite[Theorem 13.3]{Bor98}}]
	\label{corBP2g}
	Let $F = \sum_{\gamma \in L'/L} c_\gamma \mathfrak{e}_\gamma$ be a holomorphic modular form of weight 0 with respect to $\varrho_{N,N'}$ with integer coefficients. Then there exists a modular form $\Psi \colon \HH \times \HH \to \C$ of weight $c_0/2$ with respect to $\Gamma_L$ and some multiplier system $\chi$, whose zeroes and poles are supported on the boundary and which at the cusp $(\infty, \infty)$ has the convergent product expansion 
	$$\Psi(\tau_1, \tau_2) = C q_1^{\rho_{\tilde{\kappa}'}}
	\prod_{\lambda = 1}^\infty \prod_{\beta \in \Z/N\Z} \left(1 - \mathbf \zeta_N^\beta q_1^{\lambda} \right)^{c_{(0,\beta,\lambda,0)}} \cdot  q_{2}^{\rho_{\tilde{\kappa}}}
	\prod_{\lambda = 1}^\infty \prod_{\beta \in \Z/N\Z} \left(1 - \mathbf \zeta_N^\beta q_{2}^{\lambda/N'} \right)^{c_{(0,\beta,0,\lambda)}}$$
	for some non-zero constant $C \in \C^\times$.
\end{Satz}

\begin{proof}
	This follows from \cite[Theorem 13.3]{Bor98} by plugging in the constructions above. First note that the cusp $(\infty, \infty)$ corresponds to the primitive isotropic vector $\kappa$. As all the Fourier coefficients of $F$ with negative indices vanish, we get that the divisor of $\Psi$ on $\HH \times \HH$ vanishes and the zeroes and poles are supported on the boundary.
	Moreover, for the product expansion, note that we only have to consider $\lambda \in K'$ with $Q(\lambda) = 0$, which means that for $\lambda = (\lambda_1, \lambda_2) \in K'$ we have $\lambda_1 = 0$ or $\lambda_2 = 0$. This guarantees that we can split the product expansion in the form $\Psi(\tau_1, \tau_2)  = \psi_1(\tau_1) \psi_2(\tau_2)$ for functions $\psi_i \colon \HH \to \C$.
\end{proof} 

\section{Invariants of the Weil Representation}

Let $D$ be a discriminant form of level $N$.
The main goal of this section is to prove that the space of invariants $\C[D]^{\MPZ}$ under the Weil representation is generated by the characteristic functions of self-dual isotropic subgroups of $D$ (as defined below) provided that $D$ admits a self-dual isotropic subgroup.
Let $H \subset D$ be a subset. We define its characteristic function $v^H \in \C[D]$ to be
\index{characteristic function}
$$ v^H = \sum_{\gamma \in H} \mathfrak{e}_\gamma.$$
We set $H^\bot = \{\gamma \in D \colon B(\gamma, \gamma') = 0 \text{ for all } \gamma' \in H \}$.
We call a subgroup $H$ of $D$ \emph{self-orthogonal} if $H \subseteq H^\bot$, \emph{self-dual} if $H = H^\bot$, and \emph{isotropic} if the restriction of the quadratic form $Q|_{H}$ vanishes on $H$.
We say that $H$ is \emph{co-isotropic} if $H^\bot$ an isotropic subgroup of $D$.

\cite[Proposition 5.8]{Scheit} shows that the characteristic function of a subgroup $H$ of $D$ is invariant under the Weil representation if and only if $H$ is self-dual and isotropic. Moreover, if $D$ contains a self-dual isotropic subgroup, then $\sign(D) = 0$ and $|D|$ is a square. In particular, the Weil representation factors through $\SLZ$ in this case. 
\begin{Satz}
	\label{IsoSubIn}
	Assume that  $D$ possesses self-dual isotropic subgroups. Then $\C[D]^{\SLZ}$ is spanned by the characteristic functions $v^H$ of self-dual isotropic subgroups $H$ of $D$.
\end{Satz}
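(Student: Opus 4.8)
The plan is to prove both inclusions: first that every $v^H$ attached to a self-dual isotropic (Lagrangian) subgroup really is an invariant, and then conversely that these vectors exhaust $\C[D]^{\SLZ}$; for the harder second inclusion I would reduce to discriminant forms of prime power order and import the Gleason-type theorem of \cite{NRS}. \emph{Invariance.} Since $H$ is isotropic, $Q(\gamma)=0$ for all $\gamma\in H$, so $\varrho_D(T)v^H=v^H$. For $S$ I would compute
$$\varrho_D(S)v^H = \frac{\eb(\sign(D)/8)}{\sqrt{|D|}}\sum_{\beta\in D}\Big(\sum_{\gamma\in H}\eb(-(\beta,\gamma))\Big)\ef_\beta;$$
the inner character sum equals $|H|$ when $\beta\in H^\bot$ and vanishes otherwise, so using $H^\bot=H$ and $|H|^2=|D|$ the right-hand side collapses to $\eb(\sign(D)/8)\,v^H$. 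It then remains to see $\sign(D)\equiv 0 \pmod 8$: splitting Milgram's sum $\sum_{\gamma\in D}\eb(Q(\gamma))=\sqrt{|D|}\,\eb(\sign(D)/8)$ along the cosets of the Lagrangian $H$ shows that only the trivial coset survives and the sum equals $\sqrt{|D|}$, forcing $\eb(\sign(D)/8)=1$. Hence each $v^H$ is invariant, $\varrho_D$ factors through $\SLZ$, and $\mathrm{span}\{v^H\}\subseteq\C[D]^{\SLZ}$; only the reverse inclusion remains.

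\emph{Reduction to prime power order.} Writing $D=\bigoplus_p D_p$ for the primary decomposition, the level factors as $N=\prod_p p^{a_p}$, and under $\SL(\Z/N\Z)\cong\prod_p\SL(\Z/p^{a_p}\Z)$ I would exhibit $\varrho_D$ as the external tensor product $\bigotimes_p\varrho_{D_p}$: the defining formulas for $S$ and $T$, together with $Q=\sum_p Q_p$, $(\cdot,\cdot)=\sum_p(\cdot,\cdot)_p$ and $\sign(D)=\sum_p\sign(D_p)$, show that an element supported in the $p$-component acts as $\varrho_{D_p}$ on $\C[D_p]$ and trivially on the remaining factors. For an external tensor product of representations of a direct product the invariants factor, so $\C[D]^{\SLZ}=\bigotimes_p \C[D_p]^{\SL(\Z/p^{a_p}\Z)}$. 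Since a self-dual isotropic subgroup of $D$ is exactly a sum $\bigoplus_p H_p$ of self-dual isotropic subgroups of the $D_p$, with $v^H=\bigotimes_p v^{H_p}$, it suffices to prove the theorem when $D$ is a $p$-group.

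\emph{Spanning via codes.} For a $p$-group $D_p$ I would set up the dictionary with \cite{NRS}: the quadratic group $(D_p,Q)$ serves as the alphabet, the generators $S$ and $T$ of the image of $\varrho_{D_p}$ correspond (up to the roots of unity that are now trivial by the signature computation) to the generators of the associated Clifford-Weil group, and a self-dual isotropic subgroup of $D_p$ is precisely a self-dual isotropic code of length one over this alphabet. The complete weight enumerator of such a code is $\sum_{\gamma\in H}x_\gamma$, which is exactly $v^H$ after identifying linear forms with vectors of $\C[D_p]$. The degree-one part of the invariant ring of the Clifford-Weil group is then $\C[D_p]^{\SLZ}$, and \cite[Theorem 5.5.7]{NRS} identifies the full invariant ring as the span of the complete weight enumerators of self-dual isotropic codes; extracting the degree-one (equivalently, length-one) component yields the claim.

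\emph{Main obstacle.} The crux is the dictionary itself. I expect the real work to be matching the Weil representation's $S$ and $T$ with the Clifford-Weil generators on the nose, checking that the Lagrangian hypothesis is what makes the scalar factors trivial, and verifying that the \emph{form ring} formalism of \cite{NRS} genuinely accommodates an arbitrary finite quadratic $p$-group as alphabet rather than only fields or chain rings, so that \textbf{self-dual isotropic code of length one} coincides with \textbf{self-dual isotropic subgroup}. Care is also needed to confirm that only the linear slice of the Gleason theorem is being used, and that the identification of linear forms with $\C[D_p]$ respects the contragredient, so that degree-one invariants of the Clifford-Weil group are precisely $\C[D_p]^{\SLZ}$ and not its dual.
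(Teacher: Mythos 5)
Your first two steps are correct: the direct computation that $v^H$ is $S$- and $T$-invariant together with the Milgram-coset argument forcing $\sign(D)\equiv 0 \pmod 8$ is a nice self-contained replacement for the paper's citation of \cite[Proposition 5.8]{Scheit}, and the reduction to prime power level is exactly the paper's Lemma \ref{pSubGrpDecom}. The gap is your third step, which is where the entire difficulty of the theorem lives and which you leave as an acknowledged plan rather than a proof. The most concrete flaw is directional. The Clifford--Weil group $\mathcal{C}$ of \cite{NRS} is \emph{not} generated by the analogues of $\varrho_D(S)$ and $\varrho_D(T)$ alone: it has, as further independent generators, the permutation operators $m_u \colon \ef_\gamma \mapsto \ef_{u\gamma}$ for all units $u \in (\Z/N\Z)^*$. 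If you only match $S$ and $T$ to Clifford--Weil elements, you obtain $\varrho_D(\SLZ) \subseteq \mathcal{C}$ and hence $\C[D]^{\mathcal{C}} \subseteq \C[D]^{\SLZ}$ --- which is the inclusion you already proved in your invariance step, not the one you need. To conclude $\C[D]^{\SLZ} \subseteq \C[D]^{\mathcal{C}} = \mathrm{span}\{v^H\}$ you must prove the reverse containment $\mathcal{C} \subseteq \varrho_D(\SLZ)$, i.e.\ that every $m_u$ lies in the group generated by $\varrho_D(S)$ and $\varrho_D(T)$. That is a genuine fact about the Weil representation, not a definition-chase: by the explicit formulas for $\varrho_D$ on matrices $\sMa{*}{*}{N}{u}$ (\cite[Proposition 4.5]{Scheit}, \cite[Proposition 14.5.11]{CoStr}) one gets $m_u$ only up to a diagonal factor $\eb(buQ(\gamma))$ (a power of $\varrho_D(T)$) and a character whose triviality must be checked using the standing assumptions $\sign(D)=0$ and $|D|$ a square. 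This is precisely the content and the role of Lemma \ref{lemMu} in the paper, and your proposal never engages with it.

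The second problem is that invoking \cite[Theorem 5.5.7]{NRS} as a black box presupposes that the form-ring formalism of \cite{NRS} accommodates an arbitrary finite quadratic $p$-group as alphabet and that the degree-one slice of the Clifford--Weil invariant ring is exactly $\C[D_p]^{\SLZ}$. You flag both points yourself as ``the real work'' but do not carry them out; this applicability question is exactly why the statement was only mentioned but not proved in \cite{Skor08} and \cite{EhlSkor}, and why the paper (following the suggestion in \cite{Skor08}) transplants the \emph{proof} rather than the \emph{statement}: Proposition \ref{genPar} shows that the invariants of the subgroup generated by $T$ and the $M_u$ are spanned by characteristic functions of isotropic subgroups; Lemma \ref{XS} shows that the averaged operator $M = \left(\frac 1N \sum_{n} \varrho_D(T)^n\right)\circ \varrho_D(S)$ acts triangularly on this spanning set with diagonal coefficient $1$ exactly at the self-dual subgroups; Lemma \ref{lemSelfDualMax} identifies the self-dual isotropic subgroups as the maximal elements of the poset; and only the purely linear-algebraic Lemma \ref{poSetBasis} is quoted from \cite{NRS}. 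Carrying out your dictionary rigorously would force you to prove essentially these same lemmas anyway, so as it stands your third step is a restatement of the problem, not a solution to it.
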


In various places (e.g., \cite{EhlSkor} and \cite{Skor08}) this fact is mentioned but not shown. However, a proof in the upcoming reference \cite{Skor16} is advertised. As suggested in \cite{Skor08}, we adapt the proof of a similar result on codes from \cite[Theorem 5.5.7]{NRS}. We stipulate for the remainder of this section that $|D|$ is a square and $\sign(D) = 0$.

We need the following observation. Whenever we have an orthogonal decomposition of our discriminant form, we can describe which subgroups can occur as the projections of self-dual isotropic subgroups to the components:

\begin{Lemma}
	\label{selfOrthog}
	Let $D$ be a discriminant form with an orthogonal decomposition $D = D_1 \oplus D_2$, with the corresponding projections $\pi_1$ and $\pi_2$. 
	Assume that $H$ is a self-dual isotropic subgroup of $D$. Then $H_1 = \pi_1(H) \subset D_1$ and $H_2 = \pi_2(H) \subset D_2$ are co-isotropic.
	Moreover, we have $|H_1||H_2^\bot| = |H_2| |H_1^\bot| = |H|$.
	In particular, $H_1$ is self-dual if and only if $H_2$ is self-dual, in which case $H = H_1 \oplus H_2$.
\end{Lemma}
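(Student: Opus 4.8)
The plan is to introduce the two ``kernels'' $K_1 = H \cap D_1$ and $K_2 = H \cap D_2$, i.e.\ the subgroups of elements of $H$ whose projection to the other factor vanishes. Restricting the projections to $H$ gives short exact sequences $0 \to K_1 \to H \xrightarrow{\pi_2} H_2 \to 0$ and $0 \to K_2 \to H \xrightarrow{\pi_1} H_1 \to 0$, so that $|H| = |K_1|\,|H_2| = |K_2|\,|H_1|$. I would also record that $K_1 \subseteq H_1$ and $K_2 \subseteq H_2$, since an element of $H$ lying in $D_i$ equals its own $i$-th projection. Finally, as $K_1, K_2 \subseteq H$ and $H$ is isotropic, both $K_1$ and $K_2$ are isotropic as subgroups of $D_1$, resp.\ $D_2$ (the restriction of $Q$ to a summand agrees with its restriction computed in $D$).

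The crux is the identity $H_1^\bot = K_1$, with the orthogonal complement taken in $D_1$, and symmetrically $H_2^\bot = K_2$. First I would check that $H_1^\bot = H^\bot \cap D_1$: for $x \in D_1$ and $h = (h_1, h_2) \in H$, orthogonality of the decomposition gives $(x, h) = (x, h_1)$, so $x$ is orthogonal to all of $H$ if and only if it is orthogonal to $H_1 = \pi_1(H)$. Because $H$ is self-dual, $H^\bot = H$, whence $H_1^\bot = H \cap D_1 = K_1$. Since orthogonal summands of a non-degenerate form are non-degenerate, the double-complement property $(A^\bot)^\bot = A$ applies in each $D_i$, giving $H_1 = (H_1^\bot)^\bot = K_1^\bot$ with $K_1$ isotropic; this exhibits $H_1$ as co-isotropic, and $H_2 = K_2^\bot$ likewise. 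I expect this identity to be the main obstacle—everything downstream is bookkeeping—because it is precisely where the orthogonality of the decomposition and the self-duality of $H$ must be combined.

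The remaining claims are then immediate. From $H_2^\bot = K_2$ together with $|H| = |K_2|\,|H_1|$ one gets $|H_1|\,|H_2^\bot| = |H|$, and symmetrically $|H_2|\,|H_1^\bot| = |H|$, proving the cardinality identities. For the final equivalence, observe that $H_1$ is self-dual if and only if $H_1 = H_1^\bot = K_1$, which, since $K_1 \subseteq H_1$, happens if and only if $|H_1| = |K_1|$; likewise $H_2$ is self-dual if and only if $|H_2| = |K_2|$. Feeding these into the relation $|K_1|\,|H_2| = |K_2|\,|H_1|$ shows $|H_1| = |K_1| \iff |H_2| = |K_2|$, so $H_1$ is self-dual exactly when $H_2$ is.
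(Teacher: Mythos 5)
Your proof is correct and follows essentially the same route as the paper: the key identity in both is $H_1^\bot = H \cap D_1$ (your $K_1$), obtained by combining the orthogonality of the decomposition with $H^\bot = H$, after which co-isotropy follows by double complementation and the cardinality statements follow from the projection exact sequences. Your write-up merely makes explicit (the kernels $K_i$, the exact sequences, the final counting equivalence) what the paper leaves implicit.
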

\begin{proof}
	Let $\gamma \in H_1^\bot$. Then $(\gamma, 0)$ satisfies 
	$$B((\gamma,0),(\beta_1, \beta_2)) = B_{D_1}(\gamma, \beta_1) + B_{D_2}(0, \beta_2) = 0$$
	for all $(\beta_1, \beta_2) \in H$. Thus, we get $(\gamma,0) \in H^\bot = H$, $\gamma \in H_1$ and $Q_{D_1}(\gamma) = Q((\gamma,0)) = 0$. This means that $Q$ vanishes on $H_1^\bot$, in other words, $H_1^\bot$ is isotropic. Similarly for the second component.
	
	It follows that the set of those $\gamma \in D_1$ such that $(\gamma,0) \in H$, in other words the kernel of the projection $\pi_2 \colon H \to H_2$, is exactly $H_1^\bot$. By comparing the cardinality of $H$ with the kernel and image of $\pi_2$, we find $|H| = |H_1^\bot||H_2|$. The other equality can be shown similarly. 
	
	For the last statement assume that $H_1$ is self-dual. In particular, $|H_1| = |H_1^\bot|$. Hence, we get $|H_2| = |H_2^\bot|$. As $H_2^\bot \subseteq H_2$ by the first part, this shows that $H_2$ is self-dual. The other direction follows similarly. By the calculation above, we see that $H_1 \oplus H_2 = H_1^\bot \oplus H_2^\bot \subseteq H$ in this case. But $|H_1 \oplus H_2| = |H_1| \cdot |H_2| = |H|$. This proves the claim. 
\end{proof}

We can use this lemma to show that we can also restrict ourselves to the case where the level $N$ of $D$ is a prime power. Let $N = \prod_{i = 1}^n p_i^{\nu_i}$. By \cite[Proposition 3.1]{EhlSkor}, the decomposition $D = \bigoplus_{p_i} D_{p_i}$ into its $p_i$-subgroups is orthogonal with respect to $Q$. This decomposition induces an isomorphism on the space of invariants
$\C[D]^{\SL(\Z/N\Z)} \simeq \bigotimes_{i = 1}^n \C[D_{p_i}]^{\SL(\Z/p_i^{\nu_i}\Z)}.$
It thus suffices to prove Theorem \ref{IsoSubIn} when $N$ is a prime power due to the next lemma.
\begin{Lemma}
	\label{pSubGrpDecom}
	The self-dual isotropic sugroups of $D$ are exactly the subgroups of the form 
	$$ H = \bigoplus_{i = 1}^n H_{p_i} $$
	for self-dual isotropic subgroups $H_{p_i}$ of $D_{p_i}$.
\end{Lemma}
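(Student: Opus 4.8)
The plan is to reduce everything to two standard facts about the primary decomposition $D = \bigoplus_{i=1}^n D_{p_i}$: first, that \emph{every} subgroup $H \subseteq D$ (not merely $D$ itself) splits compatibly with the decomposition, and second, that both the bilinear form $B$ and the quadratic form $Q$ are ``diagonal'' with respect to it. For the former, recall that by the Chinese Remainder Theorem there is for each $i$ an integer $m_i$ with $m_i \equiv 1 \pmod{p_i^{\nu_i}}$ and $m_i \equiv 0 \pmod{p_j^{\nu_j}}$ for $j \neq i$; multiplication by $m_i$ realises the projection $\pi_{p_i}$, and since $H$ is a subgroup we get $\pi_{p_i}(H) = m_i H \subseteq H$. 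Hence $\pi_{p_i}(H) = H \cap D_{p_i}$, and writing $H_{p_i}$ for this subgroup we obtain $H = \bigoplus_{i=1}^n H_{p_i}$. For the latter, if $\gamma_i \in D_{p_i}$ and $\gamma_j \in D_{p_j}$ with $i \neq j$, then $B(\gamma_i, \gamma_j)$ is killed by both $p_i^{\nu_i}$ and $p_j^{\nu_j}$, hence vanishes in $\Q/\Z$; this is exactly the orthogonality of the decomposition recorded via \cite[Proposition 3.1]{EhlSkor}, and it also gives $Q(\sum_i \gamma_i) = \sum_i Q(\gamma_i)$.

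Then I would compute the orthogonal complement of an arbitrary $H = \bigoplus_i H_{p_i}$ with $H_{p_i} \subseteq D_{p_i}$. Writing $\gamma = \sum_j \gamma_j$, the condition $B(\gamma, \eta) = 0$ for all $\eta \in H$ decouples, by taking $\eta$ supported in a single component and using orthogonality, into $B(\gamma_i, \eta_i) = 0$ for all $\eta_i \in H_{p_i}$ and each $i$. Therefore
\[ H^\bot = \bigoplus_{i=1}^n H_{p_i}^\bot, \]
where $H_{p_i}^\bot$ denotes the complement taken \emph{inside} $D_{p_i}$. Together with $Q(\sum_i \gamma_i) = \sum_i Q(\gamma_i)$, this single identity yields both inclusions of the lemma.

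For the direction ``$\Leftarrow$'', if each $H_{p_i}$ is self-dual isotropic, then $Q$ vanishes on $H$ because it does so componentwise and the cross terms vanish, while $H^\bot = \bigoplus_i H_{p_i}^\bot = \bigoplus_i H_{p_i} = H$, so $H$ is self-dual isotropic. For ``$\Rightarrow$'', start from the splitting $H = \bigoplus_i H_{p_i}$ above; each $H_{p_i} \subseteq H$ consists of isotropic vectors, so $Q|_{H_{p_i}} = 0$, and from $H = H^\bot = \bigoplus_i H_{p_i}^\bot$ the uniqueness of the primary decomposition forces $H_{p_i} = H_{p_i}^\bot$ for every $i$. (Alternatively, one could invoke Lemma \ref{selfOrthog} applied to $D = D_{p_i} \oplus \bigoplus_{j\neq i} D_{p_j}$: since the projection lands inside $H$, the resulting co-isotropic group $H_{p_i}$ is in fact isotropic, hence self-orthogonal, and the cardinality relation of that lemma forces $|H_{p_i}^\bot| = |H_{p_i}|$, upgrading it to self-dual.)

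I expect the only genuine subtlety to be the step $\pi_{p_i}(H) \subseteq H$, that is, that a \emph{subgroup} — and not merely the ambient group $D$ — inherits the primary splitting; everything after that is a formal consequence of the orthogonality of distinct primary components. The remainder of the argument is bookkeeping with the direct-sum decompositions of $B$ and $Q$, and no convergence, arithmetic, or classification input beyond the Chinese Remainder Theorem is required.
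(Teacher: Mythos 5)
Your proof is correct, and its main line of argument is genuinely different from the paper's. The paper disposes of the lemma in two sentences by citing its Lemma \ref{selfOrthog}: for an orthogonal splitting $D = D_1 \oplus D_2$ with coprime orders, that lemma says the projections $H_1, H_2$ of a self-dual isotropic $H$ are co-isotropic and satisfy $|H_1||H_2^\bot| = |H_2||H_1^\bot|$; comparing $p$-parts, coprimality forces $|H_i| = |H_i^\bot|$, hence each $H_i$ is self-dual, and one concludes by induction on the number of primes (the splitting $H = H_1 \oplus H_2$ and the converse direction are left implicit). You instead prove everything from scratch: the CRT idempotents $m_i$ show that \emph{any} subgroup $H$ satisfies $\pi_{p_i}(H) = m_i H \subseteq H$, so $H = \bigoplus_i H_{p_i}$ canonically; orthogonality of the primary decomposition then gives $H^\bot = \bigoplus_i H_{p_i}^\bot$ with complements taken inside each $D_{p_i}$, and both directions of the lemma follow formally, the forward one via uniqueness of the primary decomposition applied to $H = H^\bot$. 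What your route buys is self-containedness and symmetry: no induction, no counting argument, the splitting of $H$ is explicit rather than inferred from cardinalities, and the ``exactly'' (i.e.\ the easy converse inclusion, which the paper never spells out) is handled on the same footing. What the paper's route buys is brevity, since Lemma \ref{selfOrthog} is already available and does the real work; your parenthetical alternative at the end essentially reconstructs that argument. One small point worth making explicit in your write-up: the step $m_i \gamma_j = 0$ for $j \neq i$ uses that $p_j^{\nu_j}$ annihilates $D_{p_j}$, which holds because the exponent of a discriminant form divides its level (non-degeneracy of $B$ gives $N\gamma = 0$ from $NB(\gamma,\cdot) = 0$); this is standard but is the one place where a property of discriminant forms, rather than of abstract abelian groups, enters your CRT argument.
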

\begin{proof}
	For the decomposition of self-dual isotropic subgroups we first consider an orthogonal decomposition $D = D_1 \oplus D_2$ where the orders of $D_1$ and $D_2$ are coprime. Let $H$ be a self-dual isotropic subgroup and $H_1, H_2$ be the projections to $D_1$ and $D_2$ respectively. From Lemma \ref{selfOrthog} we know that $|H_1||H_2^\bot| = |H_2| |H_1^\bot|$. Thus, both $H_1$ and $H_2$ are self-dual since their orders are coprime, and hence $H = H_1 \oplus H_2$ by the last statement of Lemma \ref{selfOrthog}.
	Using this argument inductively completes the proof.
\end{proof}

As the next step, we show that characteristic functions of isotropic subgroups generate the space of invariants under the Weil representation of a subgroup of $\SLZ$.
We will need matrices \(M_u\) for \(u \in (\Z/N\Z)^\times\) that operate as a certain permutation on the standard basis of \(\C[D]\) under the Weil representation. 
\begin{Lemma}
	\label{lemMu}
	For each $u \in (\Z/N\Z)^\times$, there exists a matrix $M_u \in \SLZ$ that operates as 
	$$\varrho_D(M_u) \ef_\gamma = \ef_{u \gamma}$$
	for every isotropic $\gamma \in D$.
\end{Lemma}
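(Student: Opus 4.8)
The plan is to realise $M_u$ as an $\SLZ$-lift of the torus element $\sMa{u}{0}{0}{u^{-1}}$ over $\Z/N\Z$ and to read off its action from the known behaviour of $\varrho_D$ on $\Gamma_0(N)$. First I would use that the reduction map $\SLZ \to \SL(\Z/N\Z)$ is surjective to pick $M_u \in \SLZ$ with $M_u \equiv \sMa{u}{0}{0}{u^{-1}} \pmod N$. Any such lift automatically has lower-left entry divisible by $N$, so $M_u \in \Gamma_0(N)$; since $\sign(D)$ is even (indeed $\sign(D)=0$ here) the representation $\varrho_D$ factors through $\SL(\Z/N\Z)$, so the particular choice of lift is immaterial.

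Next I would invoke the explicit formula for the action of $\varrho_D$ on $\Gamma_0(N)$ (see \cite{Scheit}), which for $M = \sMa{a}{b}{c}{d} \in \Gamma_0(N)$ has the shape
$$\varrho_D(M)\,\ef_\gamma = \zeta(M)\,\eb\!\left(a b\, Q(\gamma)\right)\,\ef_{a\gamma},$$
where $\zeta(M)$ is a root of unity independent of $\gamma$, expressed as a Kronecker symbol in $a$ and $|D|$ times an eighth root of unity depending only on $\sign(D)$. Alternatively, one may derive this from scratch by writing $\sMa{u}{0}{0}{u^{-1}}$ over $\Z/N\Z$ as a word in the elementary matrices $\sMa{1}{\ast}{0}{1}=T^{\ast}$ and $\sMa{1}{0}{\ast}{1}=S^{-1}T^{-\ast}S$ and applying the two generator formulas, collecting the resulting Gauss sums into the single scalar $\zeta$.

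Since $M_u$ reduces to a matrix with upper-left entry $a \equiv u \pmod N$, the index becomes $\ef_{a\gamma} = \ef_{u\gamma}$ as desired, and it only remains to see that the scalar is trivial on isotropic elements. For isotropic $\gamma$ we have $Q(\gamma)=0$, so $\eb(ab\,Q(\gamma))=1$ and the action reduces to $\varrho_D(M_u)\ef_\gamma = \zeta(M_u)\,\ef_{u\gamma}$ with $\zeta(M_u)$ a constant. Because the level is the prime power $N=p^\nu$ and $|D|$ is a square, $|D|=p^{2j}$, so the Kronecker symbol collapses to $\left(\frac{a}{p}\right)^{2j}=1$; and because $\sign(D)=0$ the eighth-root factor equals $1$. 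Hence $\zeta(M_u)=1$ and $\varrho_D(M_u)\ef_\gamma = \ef_{u\gamma}$ for every isotropic $\gamma$.

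The main obstacle is pinning the constant $\zeta(M_u)$ down to exactly $1$: the two standing hypotheses $\sign(D)=0$ and $|D|$ a perfect square are precisely what force both the Gauss-sum/signature factor and the Kronecker symbol to be trivial. The delicate point is the bookkeeping at $p=2$, where one must additionally use that $\sign(D)=0$ makes the oddity vanish modulo $8$ so that the eighth-root contribution disappears; everything else is the routine evaluation of the elementary generator formulas.
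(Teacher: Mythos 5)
Your proof is correct and takes essentially the same route as the paper: both arguments produce a matrix in $\Gamma_0(N)$ whose diagonal is $\equiv (u,u^{-1}) \pmod N$ (the paper takes $M_u = \sMa{*}{*}{N}{u} \in \SLZ$, you lift $\sMa{u}{0}{0}{u^{-1}}$ through the surjection $\SLZ \to \SL(\Z/N\Z)$) and then apply the explicit formula for $\varrho_D$ on $\Gamma_0(N)$, with isotropy killing the phase $\eb(\ast\, Q(\gamma))$ and the standing hypotheses ($|D|$ a square, $\sign(D)=0$, $N=p^\nu$) forcing the remaining scalar to be $1$. The paper compresses this into citations of Conway--Sloane and Scheithauer; your explicit check that the Kronecker symbol and the oddity factor are trivial is precisely the content those citations are invoked for, so the only (immaterial) difference is bookkeeping of whether the index comes out as $u\gamma$ or $u^{-1}\gamma$, which a relabeling of $u$ absorbs.
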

\begin{proof}
	Let $M_u$ be a matrix of the form $\sMa * * N u \in \SLZ$. Such a matrix exists as $N$ and $u$ are coprime.
	The assertion is now an immediate consequence of  \cite[Proposition 14.5.11]{CoStr} and  \cite[Proposition 4.5]{Scheit} that analyse the action of elements of $\Gamma_0(N)$ under the Weil representation. The conditions that $\sign(D) = 0$ and that $|D|$ is a square guarantee that the additional factors reduce to $1$. 
\end{proof}

We assume for the remainder of this section that the level $N$ is a prime power, i.e., $N = p^\nu$ for a prime number $p$.
Using the matrices $M_u$ we can characterise the space of invariants under the Weil representation of a certain subgroup of \(\SLZ\).
\begin{Proposition}[cf. {\cite[Theorem 5.1.3 and Remark 5.1.5]{NRS}}]
	\label{genPar}
	The vectors $v^H$, where $H$ ranges over isotropic subgroups of $D$, generate the subspace of invariants under the Weil representation of the subgroup of $\SLZ$ generated by $T$ and $M_u$ for $u$ ranging over $(\Z/N\Z)^\times$.
\end{Proposition}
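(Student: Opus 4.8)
The plan is to first determine the invariant space of the group $G = \langle T, M_u : u \in (\Z/N\Z)^* \rangle$ explicitly, and then match the resulting natural basis against the characteristic functions $v^H$. Write a general vector as $v = \sum_{\gamma \in D} a_\gamma \ef_\gamma$. Since $\varrho_D(T)$ scales $\ef_\gamma$ by $\eb(Q(\gamma))$, invariance under $T$ forces $a_\gamma = 0$ whenever $Q(\gamma) \neq 0$ in $\Q/\Z$; that is, the support of $v$ must consist of \emph{isotropic elements} (those $\gamma$ with $Q(\gamma) = 0$). On such a support Lemma \ref{lemMu} gives $\varrho_D(M_u)\ef_\gamma = \ef_{u\gamma}$, so invariance under all $M_u$ is equivalent to $a_\gamma$ being constant along the orbits of the multiplicative action of $(\Z/N\Z)^*$ on the isotropic elements. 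Hence $\C[D]^G$ has as a basis the orbit sums $\mathbf 1_O = \sum_{\gamma \in O} \ef_\gamma$, where $O$ runs through these orbits.

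The inclusion $\mathrm{span}\{v^H\} \subseteq \C[D]^G$ is then immediate: if $H$ is an isotropic subgroup, every $\gamma \in H$ satisfies $Q(\gamma) = 0$, so $v^H$ is supported on isotropic elements, and $H$, being a subgroup, is closed under multiplication by any integer and in particular by every $u \in (\Z/N\Z)^*$; thus $v^H$ is fixed by $T$ and by every $M_u$.

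The content of the proposition is the reverse inclusion, i.e.\ that each orbit sum $\mathbf 1_O$ lies in $\mathrm{span}\{v^H\}$. The key step, and the place where the prime-power assumption $N = p^\nu$ is essential, is to identify the orbits precisely: for an isotropic $\gamma$ of order $p^i$, the value $u\gamma$ depends only on $u \bmod p^i$, the reduction $(\Z/p^\nu\Z)^* \to (\Z/p^i\Z)^*$ is surjective, and every residue coprime to $p^i$ is already coprime to $p^\nu$. Therefore the orbit of $\gamma$ is exactly the set of generators of the cyclic subgroup $\langle \gamma \rangle$, which is itself an isotropic subgroup. In particular every orbit of isotropic elements arises as the set of generators of a unique cyclic isotropic subgroup.

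Finally I would invert the resulting relation using the chain structure of a cyclic $p$-group. Writing $\{0\} = C_0 \subset C_1 \subset \dots \subset C_i = \langle \gamma \rangle$ for the (totally ordered) subgroups of $\langle \gamma \rangle$ and $O_k$ for the set of generators of $C_k$, one has the partition $C_k = \bigsqcup_{l \le k} O_l$, so that $v^{C_k} = \sum_{l \le k} \mathbf 1_{O_l}$. This telescopes to $\mathbf 1_{O_i} = v^{\langle \gamma \rangle} - v^{C_{i-1}}$ (and $\mathbf 1_{O_0} = \ef_0 = v^{\{0\}}$), exhibiting every orbit sum as a difference of characteristic functions of isotropic subgroups. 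Combined with the first two paragraphs this yields $\C[D]^G = \mathrm{span}\{v^H\}$. The main obstacle is the orbit identification of the third paragraph; once the orbits are pinned down to the primitive parts of cyclic isotropic subgroups, the remaining linear algebra is precisely the elementary telescoping just described.
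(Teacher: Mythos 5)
Your proof is correct and follows essentially the same route as the paper's: $T$-invariance restricts the support to isotropic elements, Lemma \ref{lemMu} identifies the $(\Z/N\Z)^*$-orbits with the generator sets $H^*$ of cyclic isotropic subgroups, and the chain structure of cyclic $p$-groups gives the telescoping identity $v^{H^*} = v^{\langle\gamma\rangle} - v^{C_{i-1}}$, which is exactly the paper's $v^{H^*} = v^H - v^{H'}$ with $H' = pH$. The only (inessential) difference is bookkeeping: you first exhibit the orbit sums as a basis of the invariants and then span them by the $v^H$, while the paper passes directly from the generating set $\{v^{H^*}\}$ to $\{v^H\}$.
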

\begin{proof}
	Recall that by definition $T$ operates as multiplication by $\eb(Q(\gamma))$ on $\ef_\gamma$ and thus $\ef_\gamma$ is invariant under $T$ if and only if $\gamma$ is isotropic.
	Moreover, characteristic functions of isotropic subgroups are invariant under $M_u$ since for any subgroup $H$ of $D$ multiplication by an integer coprime to the order of $H$ is an isomorphism on $H$.
	
	For the other direction note that by Lemma \ref{lemMu} the set of images of $\ef_\gamma$ under $M_u$, for $u$ ranging over $(\Z/N\Z)^\times$, is the set of $\ef_{u\gamma}$ for $u \in(\Z/N\Z)^\times$,  and those are exactly those $\ef_{\beta}$ with $\beta \in D$ such that the cyclic subgroups of $D$ generated by $\beta$ and $\gamma$ coincide.
	Using again that $\ef_\gamma$ is invariant under $T$ if and only if $\gamma$ is isotropic, this shows that the characteristic functions $v^{H^*}$ of the set of generators
	$H^* = \{ \gamma \in H \colon \langle \gamma \rangle = H \}$ of cyclic, isotropic subgroups $H$ of $D$ generate the space of invariants. 
	
	Let now $H$ be a cyclic, isotropic subgroup with generators $H^* \subset H$ and take $\gamma \in H^*$. Let $(p) \subset \Z/p^\nu\Z$ be the ideal generated by $p$. We have $H^* = H \backslash (p) H$, and $H' = (p)H  = H\backslash H^\ast$ is again a cyclic, isotropic subgroup. Thus, 
	$$v^{H^*} = v^H - v^{H'},$$
	and the characteristic functions of isotropic subgroups generate the subspace of invariants under the Weil representation of the subgroup of $\SLZ$ generated by $T$ and $M_u$ for $u$ ranging over $(\Z/N\Z)^\times$.
\end{proof}

We now study the invariants of the endomorphism $A = \left(\frac{1}{N} \sum_{n = 0}^{N-1} \varrho_D(T)^n \right) \circ \varrho_D(S)$. Note that any invariant of $\SLZ$ under the Weil representation is an invariant of $A$.
We explicitly compute the operation of $A$ on characteristic functions of isotropic subgroups.
\begin{Lemma}[cf. {\cite[Lemma 5.5.9]{NRS}}]
	\label{XS}
	For any isotropic subgroup $H$ of $D$ there are rational numbers $n_{H'}$ such that
	$$ A v^H = \sum_{H'}
	n_{H'} v^{H'}, $$
	where the sum is taken over isotropic subgroups $H' \subseteq H^\bot$ that are generated by $H$ and a single element of $H^\bot$. 
	Moreover, $n_H = 1$ if and only if $H$ is self-dual.
\end{Lemma}
The coefficients $n_{H'}$ in the lemma are unique as can be seen as follows. Let $H'$ be a maximal such subgroup and let $\gamma \in H^\bot$ be such that $\langle H, \gamma \rangle = H'$. By the maximality of $H'$, the element $\gamma$ is not contained in any other subgroup of this form. But this means that $n_{H'}$ is the coefficient of $\gamma$ in $Av^H$. One concludes inductively that also all other coefficients are uniquely determined. 
\begin{proof}
	Let $H \subset D$ be an isotropic subgroup.
	By the proof of \cite[Proposition 5.8]{Scheit}, the operation of $\varrho_D(S)$ on the characteristic function of $H$ is given by 
	$$ \varrho_D(S) v^H = \frac{|H|}{\sqrt{|D|}} v^{H^\bot},$$
	where we use again the fact that $\sign(D) = 0$. 
	Thus, the operation of $A$ is given by
	\begin{equation}
		\label{eqnMvH}
		A v^H = \frac{|H|}{\sqrt{|D|}} \sum_{\substack{\gamma \in H^\bot \\ Q(\gamma) = 0}} \ef_\gamma = \frac{|H|}{\sqrt{|D|}} \left(  v^H + \sum_{\substack{C \subset H^\bot, C \not \subset H \\ C \text{ cyclic, isotropic}}} v^{C^*} \right),
	\end{equation}
	where we denote by $C^* = \{ \gamma \in C \colon \langle \gamma \rangle = C \}$ the set of generators of $C$ as above.
	This proves the claim in the case where $H$ is self-dual.
	
	Assume now that $H$ is not self-dual. Let now $H' =  \langle H, \gamma \rangle$ be the subgroup of $H^\bot$ generated by $H$ and some isotropic $\gamma \in H^\bot \setminus H$. 
	We claim that $H'$ contains a unique maximal subgroup containing $H$, namely $H'' = \langle H, p\gamma \rangle$. 
	In order to prove this claim, let $ \beta \in H'$. We can write $\beta = \delta +  n\gamma$ for some $\delta \in H$ and $n \in \Z/N\Z$. If $n \in (p)$, we clearly have $\beta \in H''$. Otherwise, $n \in (\Z/N\Z)^\times$ as $(p)$ is the unique maximal ideal in $\Z/N\Z$. Then, $\gamma = n^{-1}(\beta - \delta) \in \langle H, \beta \rangle$ and hence $H$ and $\beta$ generate $H'$. 
	This proves the claim.

	Thus, any element $\beta \in H' \setminus H''$ generates an isotropic cyclic subgroup $C$ of $H^\bot$ such that the union of $H$ and $C$ generates $H'$.   
	Hence, 
	$$ \sum_{\substack{C \subset H^\bot, \langle H, C\rangle = H' \\ C \text{ cyclic, isotropic} }} v^{C^*} = v^{H'} - v^{H''}.$$
	Together with (\ref{eqnMvH}) this shows that we can write $ A v^H$ as claimed. 
	
	To see that $n_H \neq 1$ whenever $H$ is not self-dual, observe that the coefficient is given by 
	$ n_{H}  = \frac{|H|}{\sqrt{|D|}}\cdot (1 - \tilde{n}_H)$,
	where $\tilde{n}_H$ is the number of subgroups $ H' \subset H^\bot$ such that  $ H' = \langle H, \gamma \rangle$ for some isotropic $\gamma \in H^\bot$ and such that $H$ is a maximal subgroup of $H'$. In particular, $n_H < 1$.
\end{proof}

Thus, Lemma \ref{XS} means that $A$ operates on the space of invariants of the subgroup of Proposition \ref{genPar}, which is spanned by the set of characteristic functions $v^H$ for isotropic subgroups $H$ of $D$, and acts triangularly on this spanning set. 
We can therefore apply the following lemma, that is proven in \cite[Lemma 5.5.10]{NRS}.

\begin{Lemma}
	\label{poSetBasis}
	Let $V$ be a finite dimensional vector space (over an arbitrary field), let $A$ be a linear transformation on $V$, and let $(P, \leq)$ a partially ordered set. Suppose that there exists a spanning set $v_p$ of V indexed by $p \in P$ on which $A$ acts triangularly, i.e.
	$$A v_p = \sum_{q \geq p} c_{pq} v_q, $$
	for suitable coefficients $c_{pq}$. Suppose furthermore that $c_{pp} = 1$ if and only if $p$ is maximal in $P$. Then the subspace of $V$ that is fixed by $A$ is spanned by the elements $v_p$ for $p$ maximal.
\end{Lemma}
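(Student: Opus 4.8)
The plan is to isolate the subspace $V_{\max} = \mathrm{span}\{v_p : p \text{ maximal}\}$ spanned by the maximal generators and to show it equals the fixed space $\ker(M - \mathrm{id})$ by a two-sided inclusion. The inclusion $V_{\max} \subseteq \ker(M-\mathrm{id})$ is immediate: if $p$ is maximal then the only index $q \geq p$ is $p$ itself, so the triangularity hypothesis gives $M v_p = c_{pp} v_p = v_p$, using that $c_{pp} = 1$ for maximal $p$.

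For the reverse inclusion, the organizing observation is that every up-set (upward closed subset) $U \subseteq P$ yields an $M$-invariant subspace $V_U = \mathrm{span}\{v_p : p \in U\}$, since $M v_p = \sum_{q \geq p} c_{pq} v_q$ lies in $V_U$ whenever $p \in U$. As the set of maximal elements is itself an up-set, $V_{\max}$ is $M$-invariant, and $M$ descends to an operator $\bar M$ on the quotient $V/V_{\max}$. I would then reduce the claim to showing that $\bar M - \mathrm{id}$ is injective on $V/V_{\max}$: given $w$ with $Mw = w$, its class $\bar w$ lies in $\ker(\bar M - \mathrm{id})$, and injectivity forces $\bar w = 0$, i.e. $w \in V_{\max}$.

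To prove injectivity I would instead prove surjectivity of $\bar M - \mathrm{id}$ (equivalent since $\dim V < \infty$). The quotient is spanned by the classes $\bar v_p$ for non-maximal $p$ (the maximal ones vanish), and for such $p$ one computes $(\bar M - \mathrm{id})\bar v_p = (c_{pp}-1)\bar v_p + \sum_{q > p,\ q\text{ non-maximal}} c_{pq}\,\bar v_q$, where crucially $c_{pp} - 1 \neq 0$ because $p$ is not maximal. I would show each $\bar v_p$ lies in the image by descending induction on the finite poset of non-maximal elements: for $p$ maximal among the non-maximal elements the error sum is empty, so $\bar v_p = (c_{pp}-1)^{-1}(\bar M - \mathrm{id})\bar v_p$ lies in the image; for general $p$ one solves for $(c_{pp}-1)\bar v_p$ and observes that the right-hand side is in the image, since $(\bar M-\mathrm{id})\bar v_p$ is and every $\bar v_q$ with $q > p$ is by the induction hypothesis. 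As these classes span the quotient, $\bar M - \mathrm{id}$ is surjective, hence injective, which completes the argument.

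The main obstacle I anticipate is that $\{v_p\}$ is only a spanning set, not a basis, so one cannot literally invert a triangular matrix; the passage to the quotient $V/V_{\max}$ together with the image/surjectivity formulation is precisely what sidesteps the possible linear dependencies among the $v_p$. The descending induction uses finiteness of $P$, which holds in our application where $P$ is the set of isotropic subgroups of the finite discriminant form $D$.
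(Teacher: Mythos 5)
Your proof is correct. Note that the paper itself gives no proof of this lemma at all: it is quoted from \cite[Lemma 5.5.10]{NRS}, so your argument supplies exactly what the paper outsources. Your route is clean and works: the span \(V_{\max}\) of the maximal \(v_p\) is fixed pointwise by \(M\) (immediate from triangularity and \(c_{pp}=1\)); the observation that any up-set of \(P\) spans an \(M\)-invariant subspace lets you pass to the quotient \(V/V_{\max}\); there, non-maximality gives \(c_{pp}-1\neq 0\), and your descending induction shows \(\bar M-\mathrm{id}\) is surjective, hence injective because the space is finite dimensional, which is precisely what the reverse inclusion requires. Working with surjectivity on the quotient rather than trying to invert a triangular matrix is the right move, since the \(v_p\) are only a spanning set and may be linearly dependent --- this is the main trap in the statement, and you sidestep it correctly. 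One point deserves emphasis: your induction uses finiteness of \(P\) (or at least that ascending chains terminate in maximal elements), and this is not a removable convenience, since the lemma is false as literally stated for infinite posets. For instance, take \(P=\N\) with its usual order, \(V=\mathbb{K}\), \(v_p=1\) for all \(p\), \(M=\mathrm{id}\), with the triangular representation \(Mv_p=0\cdot v_p+1\cdot v_{p+1}\): the hypotheses hold (there are no maximal elements and \(c_{pp}=0\neq 1\) for all \(p\)), yet the fixed space is all of \(V\), not the zero span of the empty set of maximal elements. So strictly speaking you prove the lemma under an additional finiteness hypothesis (explicit in NRS, implicit in the paper); since the paper applies it with \(P\) the finite set of isotropic subgroups of the finite discriminant form \(D\), this is exactly the case needed, and you correctly flag it.
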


The set of isotropic subgroups of a given discriminant form is partially ordered by inclusion. We show that the maximal elements in this partial order are exactly the self-dual isotropic subgroups.
\begin{Lemma}
	\label{lemSelfDualMax}
	If $D$ possesses self-dual isotropic subgroups, then every isotropic subgroup $H \subset D$ is contained in some self-dual isotropic subgroup.
	Moreover, the maximal isotropic subgroups of $D$ are exactly the self-dual ones in this case.
\end{Lemma}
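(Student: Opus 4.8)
The plan is to prove the first assertion by an explicit construction and then to read off the statement about maximal subgroups from a size bound. Throughout I would use two standard facts about the nondegenerate form on $D$: for any subgroup $X$ one has $|X|\,|X^\bot| = |D|$ and $(X^\bot)^\bot = X$. In particular, a self-dual isotropic subgroup $K$ (which exists by hypothesis) satisfies $|K| = \sqrt{|D|}$, and an arbitrary isotropic subgroup $H$ satisfies $H \subseteq H^\bot$, hence $|H| \le |H^\bot| = |D|/|H|$, so that $|H| \le \sqrt{|D|}$ with equality precisely when $H$ is self-dual.

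For the first claim, fix a self-dual isotropic subgroup $K$ and an arbitrary isotropic subgroup $H$, and set $L = (K \cap H^\bot) + H$. I would show that $L$ is a self-dual isotropic subgroup containing $H$. That $H \subseteq L \subseteq H^\bot$ is immediate, since $H$ is isotropic and $K \cap H^\bot \subseteq H^\bot$. To see that $L$ is isotropic, write two elements of $L$ as $x + h$ and $y + h'$ with $x, y \in K \cap H^\bot$ and $h, h' \in H$; expanding $B(x+h, y+h')$, the term $B(x,y)$ vanishes because $K$ is self-orthogonal, the mixed terms vanish because $x, y \in H^\bot$ and $h, h' \in H$, and $B(h,h')$ vanishes because $H$ is self-orthogonal, so $L \subseteq L^\bot$; similarly $Q(x+h) = Q(x) + B(x,h) + Q(h) = 0$.

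It then remains to check that $|L| = \sqrt{|D|}$, which is the computational heart of the argument. Using $|A \cap B|\,|A + B| = |A|\,|B|$ together with $(K + H^\bot)^\bot = K^\bot \cap H = K \cap H$, I obtain
\[ |K \cap H^\bot| = \frac{|K|\,|H^\bot|}{|K + H^\bot|} = \frac{|K|\,|H^\bot|\,|K \cap H|}{|D|} = \frac{\sqrt{|D|}\,|K \cap H|}{|H|}. \]
Feeding this into $|L| = |K \cap H^\bot|\,|H| / |K \cap H^\bot \cap H| = |K \cap H^\bot|\,|H|/|K \cap H|$ (noting that $K \cap H^\bot \cap H = K \cap H$ because $H \subseteq H^\bot$) yields $|L| = \sqrt{|D|}$, so $L$ is self-dual. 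This bookkeeping with orthogonal complements is the step I expect to need the most care, since it is where the hypothesis that $K$ is self-dual, through $|K| = \sqrt{|D|}$, actually enters the argument.

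Finally, the statement about maximal subgroups follows formally. By the size bound above, a self-dual isotropic subgroup has the maximal possible order $\sqrt{|D|}$ among isotropic subgroups, so it cannot be properly contained in any other isotropic subgroup and is therefore maximal. Conversely, any maximal isotropic subgroup is contained in a self-dual isotropic subgroup by the first part, and maximality forces equality; hence the maximal isotropic subgroups are exactly the self-dual ones.
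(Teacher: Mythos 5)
Your proof is correct, but it takes a genuinely different route from the paper's. The paper argues incrementally: given an isotropic $H$ with $H \neq H^\bot$ and a self-dual isotropic $\tilde H$, it considers the homomorphism $\tilde H \to D/H^\bot$; since $|D/H^\bot| = |H| < \sqrt{|D|} = |\tilde H|$, the kernel $\tilde H \cap H^\bot$ contains a nontrivial element $x$, and adjoining $x$ to $H$ keeps it isotropic. Hence maximal isotropic subgroups must be self-dual, self-dual ones are maximal by comparing orders, and the containment statement follows by iterating the extension (finiteness of $D$). You instead produce the self-dual overgroup in a single step, $L = (K \cap H^\bot) + H$, and verify $|L| = \sqrt{|D|}$ exactly, using $|X|\,|X^\bot| = |D|$, $(X^\bot)^\bot = X$ and $(A+B)^\bot = A^\bot \cap B^\bot$; your maximality argument at the end then matches the paper's. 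What your version buys: an explicit closed-form completion of $H$ with no induction, and it sidesteps a point the paper leaves implicit --- to enlarge $H$ \emph{properly} one needs the kernel element to lie outside $H$, which requires the sharper count $|\tilde H \cap H^\bot| = \sqrt{|D|}\,|\tilde H \cap H|/|H| > |\tilde H \cap H|$, essentially the identity at the heart of your order bookkeeping. What the paper's version buys: each step is a soft pigeonhole argument requiring no subgroup-lattice identities, at the price of an iteration and with the logical order reversed (maximality first, containment as a corollary, whereas you prove containment first).
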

\begin{proof}
	Let $H$ be an isotropic subgroup such that $H \neq H^\bot$, and let $\Tilde{H}$ be a self-dual isotropic subgroup. Consider the group homomorphism $\Tilde{H} \to D/H^\bot$. Since $|D/H^\bot| < |\tilde{H}|$, the kernel contains a non-trivial element $x \in \tilde{H} \cap H^\bot$. Then the group $\langle H, x \rangle$ is still isotropic.
	This means that maximal elements in the set of isotropic subgroups of $D$ have to be self-dual.
	On the other hand, a self-dual isotropic subgroup cannot be a proper subgroup of another isotropic subgroup. This can be easily seen by comparing the orders of the groups.
\end{proof}

\begin{proof}[Proof of Theorem \ref{IsoSubIn}]
	Let $D$ be a discriminant form posessing self-dual isotropic subgroups. 
	Recall that by \cite[Proposition 5.8]{Scheit} the characteristic functions of self-dual isotropic subgroups are invariant and that our standing assumptions $\sign(D) = 0$ and $|D|$ a square are satisfied. Moreover, by \cite[Proposition 3.1]{EhlSkor} and Lemma \ref{pSubGrpDecom} we may assume that the level $N$ of $D$ is a prime power.
	
	We consider the space $V$ of invariants under $T$ and $M_u$ for $u \in (\Z/N\Z)^\times$. 
	Any invariant for the full group $\SLZ$ clearly belongs to this space and is moreover invariant under the operation of $A$ defined before Lemma \ref{XS}.
	By Proposition \ref{genPar}, $V$ is spanned by the characteristic functions of isotropic subgroups. 	As discussed above, the set isotropic subgroups of $D$ is partially ordered with respect to the inclusion relation.
	By Lemma \ref{XS}, the map $A$ restricts to an endomorphism of $V$ and operates triangularly on the characteristic functions of isotropic subgroups.
 	By Lemma \ref{lemSelfDualMax}, the maximal elements in this partially ordered set are precisely the self-dual isotropic subgroups.
	By Lemma \ref{poSetBasis}, the space $V^A$ of vectors that are invariant under $A$ is thus spanned by the characteristic functions of self-dual isotropic subgroups. As $\C[D]^{\SLZ}$ is contained in $V^A$ and contains all characteristic functions of self-dual isotropic subgroups, $\C[D]^{\SLZ}$ is spanned by the characteristic functions of self-dual isotropic subgroups.
\end{proof} 

\section{Example: The Lattice $L_{N,N'}$}
\label{secLN}
We want to describe the space of invariants of the Weil representation for the discriminant form of the orthogonal sum $L_{N,N'} = U(N) \oplus U(N')$ of two rescaled hyperbolic planes for a divisor $N'$ of $N$.
Recall that the associated discriminant group is given by 
\[ D := D_{N,N'} = (\Z/N\Z)^2 \oplus (\Z/N'\Z)^2,\]
\[ Q \colon D_{N,N'} \to \Q/\Z, \qquad (w,x,y,z) \mapsto \frac{wx}{N} + \frac{yz}{N'} \mod \Z.\]
We also use the shorthand notation $D_N = D_{N,1} = (\Z/N\Z)^2$, so $D \cong D_N \oplus D_{N'}$. 
In order to distinguish the two summands of $D$ in the case $N = N'$, we denote the first summand of $D$ by $D_N$ and the second summand by $D_{N'}$.

By Lemma \ref{selfOrthog} applied to the orthogonal decomposition $D_{N,N'} = (\Z/N\Z)^2 \oplus (\Z/N'\Z)^2$, the projection of a self-dual isotropic subgroup $H$ of $D_{N,N'}$ to $D_N$ is a co-isotropic subgroup of $D_N$.
We thus start by characterising co-isotropic subgroups of $D_N$. 
This will already yield a complete description of the invariants in the case $N' = 1$. 
We use these observations to construct a reasonably large class of self-dual isotropic subgroups for general $N'$ and show that our list is exhaustive when $N'$ is a prime number.

In order to characterise co-isotropic subgroups of $D_N$, we use the following notation for subgroups of $D_N$. Let $x$ and $z$ be two divisors of $N$ and let $y \in \Z/N\Z$.
We define
$H_{x,y,z}$ to be the subgroup of $D_N$ generated by $(x,y)$ and $(0,z)$. In particular, for fixed $x$ and $z$ the group $H_{x,y,z}$ only depends on the class of $y$ in $\Z/z\Z$. 

\begin{Lemma}
	\label{lemSubgrD}
		Every subgroup of $D_N$ is of the form $H_{x,y,z}$ for suitable divisors $x$ and $z$ of $N$, and a suitable $y \in \Z/NZ$. Moreover, $z$ can be chosen to be the minimal (positive) divisor of $N$ such that $(0,z) \in H_{x,y,z}$. 
		For this minimal choice of $z$ it follows that 
		\begin{equation}
			|H_{x,y,z}| = \frac{N}{x} \cdot \frac{N}{z},
			\label{eqnHxyz}
		\end{equation}
		that $xz$ divides $Ny$, and that the orthogonal complement of $H_{x,y,z}$ in $D_N$ is given by 
		$$H_{x,y,z}^\bot = H_{\frac N z , -\frac{Ny}{xz}, \frac{N}{x}}. $$
\end{Lemma}
\begin{proof}
	Let $G \subseteq D_N$ be a subgroup. Then $G = H_{x,y,z}$ with 
	\begin{align*}
		x & =  \min \{ 1 \leq s \leq N \colon (s, r) \in G \text{ for some } r \in \Z/N\Z \}, \\ 
		y & =  \min \{ 0 \leq r \leq N-1 \colon (x, r) \in G \}, \text{ and } \\ 
		z & =  \min \{ 1 \leq r \leq N \colon (0, r) \in G \}.
	\end{align*}
	It follows directly that $x$ and $z$ are divisors of $N$ and that $y < z$. Moreover, $\frac{N}{x} (x,y) = (0,\frac{Ny}{x}) \in H_{x,y,z}$, which by the minimality of $z$ implies that $xz$ divides $Ny$. This also proves Equation (\ref{eqnHxyz}).
	
	For the description of the orthogonal complement we write $H^\bot_{x,y,z} = H_{x',y',z'}$. We obtain $0 = B((x,y),(0,z')) = B((0,z),(x',y'))$ , i.e. $xz' \equiv zx'  \equiv 0 \mod N$. Using Equation (\ref{eqnHxyz}), we obtain
	$$ \frac{N^2}{x'z'} = |H_{x',y',z'}| = \frac{N^2}{|H_{x,y,z}|} = xz.$$
	This means that $x' = N/z$ and $z'=N/x$.
	From 
	$0 = B((x,y),(x',y')) = \frac{xy' + x'y}{N} \mod \Z$ we obtain that $x | x'y = \frac{Ny}{z}$. In particular, $y' \equiv - \frac{Ny}{xz} \mod \frac N x$.
\end{proof}

Note that the choice of the divisor $z$ is not unique in general. Namely, when $\frac{Ny}{x}$ equals (up to a unit in $\Z/N\Z$) the minimal possible choice $z_0$ for $z$ (as constructed in the proof of the previous lemma), choosing $z$ to be any multiple of $z_0$ clearly gives rise to the same subgroup $H_{x,y,z} = H_{x,y,z_0}$ of $D_N$.
By the previous lemma, we may and will always assume without further mention that $z$ is chosen minimally.
Note that the proof of the previous lemma also shows that for the orthogonal complement $H_{x',y',z'} = H_{x,y,z}^\bot$ the minimal choice is $z' = \frac{N}{x}$. 
Using this description of subgroups of $D_N$, we can classify all (self-dual or co-) isotropic subgroups of $D_N$.

\begin{Lemma}
	\label{lemSubgrD1}
	\begin{enumerate}
		\item $H_{x,y,z}$ is an isotropic subgroup if and only if $N|xz$ and $N|xy$.
		\label{lem3}
		\item $H_{x,y,z}$ is co-isotropic if and only if $xz | N$ and $xz^2 | Ny$.
		\label{lem4}
		\item The group $H_{x,y,z}$ is self-dual isotropic if and only if $x z = N$ and $y \equiv 0 \mod z$. In other words, the self-dual isotropic subgroups of $D_N$ are exactly those of the form $H_{d,0,N/d}$ for divisors $d$ of $N$.
		\label{lem5}
	\end{enumerate}
\end{Lemma}
\begin{proof}
	For (\ref{lem3}) note that the group generated by $(x,y)$ and $(0,z)$ is isotropic if and only if both generators are isotropic and orthogonal. But this means exactly $xy, xz \equiv 0 \mod N$.
	Then (\ref{lem4}) follows directly from Lemma \ref{lemSubgrD} and (\ref{lem3}). 
	Finally, (\ref{lem5}) is a consequence of (\ref{lem4}) together with the fact that for self-dual isotropic subgroups $|H_{x,y,z}| = N^2/(xz) = N$ by Equation (\ref{eqnHxyz}).
\end{proof}

In the case $N' =1$ we show that the characteristic functions of self-dual isotropic subgroups are also linearly independet. This gives a new proof of \cite[Lemma 3.2]{Ye2021}, see also \cite[Corollary 5.5]{Zemel2021}.
\begin{Korollar}
	\label{corBasis}
	The characteristic functions $v^{H_{d,0,N/d}}$ where $d$ is a divisor of $N$ form a basis of the space 
	$\C[D_N]^{\SLZ}$. In particular, $\dim \C[D_N]^{\SLZ} = \sigma_0(N)$, where $\sigma_0(N)$ denotes the number of divisors of $N$. 
\end{Korollar}
\begin{proof}
	The characteristic functions span the space of invariants following Theorem \ref{IsoSubIn}. They are linearly independent since for every pair of divisors $d, d'$ of $N$ we have $(d, N/d) \in H_{d',0,N/d'}$ if and only if $d' = d$.
	The second assertion follows immediately.
\end{proof}

As another immediate consequence of Lemma \ref{lemSubgrD1}, we have the following easier description of co-isotropic subgroups of $D_N$ when $N$ is square-free. This will be used in the proof of Corollary \ref{corSelfDualNp} below.
\begin{Lemma}
	\label{lemCoIsoP}
	Let $N$ be square-free. The co-isotropic subgroups of $D_N$ are the subgroups $H_{d_1,0,d_2}$ for a pair of divisors $d_1, d_2$ of $N$ such that $d_1 d_2|N$.
\end{Lemma}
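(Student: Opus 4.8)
The plan is to reduce everything to the explicit characterisation of co-isotropic subgroups already established in Lemma \ref{lemSubgrD1}(\ref{lem4}) and then to exploit the square-freeness of $N$ in order to eliminate the ``twist'' parameter $y$. By Lemma \ref{lemSubgrD1}(\ref{lem1}), every subgroup of $D^{(1)}$ can be written uniquely as $H_{x,y,z}$ with $x,z$ divisors of $N$ chosen minimally and $0 \leq y < z$. By part (\ref{lem4}), such a subgroup is co-isotropic precisely when $xz|N$ and $xz^2|Ny$. So it suffices to show that, for square-free $N$, these two divisibility conditions are equivalent to having $y = 0$ together with $xz|N$.

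For the forward direction, I would suppose $H_{x,y,z}$ is co-isotropic and write $N = xzm$, which is possible since $xz|N$. As $N$ is square-free and $xz|N$, no prime can divide both $xz$ and $m$, so in particular $\gcd(z,m) = 1$. Substituting $N = xzm$ into $xz^2|Ny$ and cancelling $xz$ turns the condition into $z|my$. The coprimality of $z$ and $m$ then gives $z|y$, and since $0 \leq y < z$ this forces $y = 0$. Hence the subgroup has the form $H_{x,0,z}$ with $xz|N$, and setting $d_1 = x$, $d_2 = z$ yields the claimed shape with $d_1 d_2|N$.

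Conversely, I would check that each $H_{d_1,0,d_2}$ with $d_1 d_2|N$ is co-isotropic. Such a subgroup is already in the canonical form of part (\ref{lem1}): since $d_1|N$, the first coordinates occurring are exactly the multiples of $d_1$, so $x = d_1$ is minimal; the generator $(d_1,0)$ shows that the associated minimal $y$ is $0$; and the elements with vanishing first coordinate are exactly the multiples of $(0,d_2)$, so $z = d_2$ is minimal. Applying part (\ref{lem4}) with $y = 0$, the condition $xz^2|Ny$ holds trivially and $xz|N$ is exactly the hypothesis $d_1 d_2|N$, so the subgroup is co-isotropic.

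The only genuine content beyond bookkeeping is the implication $xz^2|Ny \Rightarrow y = 0$, and I expect this to be the main (though modest) obstacle: it is precisely here that square-freeness enters, through the coprimality $\gcd(z,m) = 1$ that allows one to pass from $z|my$ to $z|y$. For non-square-free $N$ this inference fails and genuinely twisted co-isotropic subgroups with $y \neq 0$ can appear, so the hypothesis cannot be dropped.
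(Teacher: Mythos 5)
Your proposal is correct and follows essentially the same route as the paper: both directions reduce to Lemma \ref{lemSubgrD1}(iv), with square-freeness used to pass from $xz^2 \mid Ny$ to $z \mid y$ and then $0 \leq y < z$ forcing $y = 0$. The paper's proof merely compresses this, asserting $z \mid y$ directly from (iv), whereas you spell out the intermediate step (writing $N = xzm$ and using $\gcd(z,m)=1$); the content is identical.
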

\begin{proof}
	Let $d_1, d_2$ be a pair of divisors of $N$ such that $d_1 d_2|N$. Then $H_{d_1,0,d_2}$ is co-isotropic by Lemma \ref{lemSubgrD1} (\ref{lem4}).
	
	Let $H_{x,y,z}$ be a co-isotropic subgroup of $D_N$. 
	By Lemma \ref{lemSubgrD1} (\ref{lem4}), we obtain $z | \frac{N}{xz} y$. Since $N$ is square-free, $z$ and $\frac{N}{xz}$ are coprime, so $z|y$. Thus, we may choose $y =0$.
\end{proof}

The next goal is to give conditions on how to obtain self-dual isotropic subgroups $H$ of $D = D_{N,N'} = D_N \oplus D_{N'}$ from a co-isotropic subgroup of $D_N$ and one of $D_{N'}$.

Let \(H\) be a self-dual isotropic subgroup of \(D\). Then $|H|^2 = |D| = N^2 N'^2$. 
We denote the projections of $H$ onto the two components of $D$ by $H_1$ and $H_2$. By Lemma \ref{selfOrthog}, we get that $H_1$ and $H_2$ are co-isotropic subgroups of $D_N$ and $D_{N'}$ respectively.
By Lemma \ref{lemSubgrD} and  Lemma \ref{lemSubgrD1} (\ref{lem4}), we can write $H_1 = H_{x,y,z}$ and $H_2 = H_{x',y',z'}$ for parameters such that $xz|N$ and $x'z'|N'$.

Again by Lemma  \ref{selfOrthog}, we find that $|H_1| |H_2^\bot| = |H_2||H_1^\bot| = N N'$. 
Together with $|H_1||H_1^\bot| = N^2$ and $  |H_2||H_2^\bot| = (N')^2$, we get that $|H_1| = \frac{N}{N'} |H_2|$. Now Equation (\ref{eqnHxyz}) implies that $N' xz = N x'z'$.

In order to describe self-dual isotropic subgroups of $D$, we introduce the following notation.
For any pair of co-isotropic subgroups $H_{x,y,z}, H_{x',y',z'}$ in $D_N$ and $D_{N'}$ respectively such that $N'xz = N x'z'$, and any pair of elements $(a,b), (c,d) \in H_{x',y',z'}$, we consider the subgroup
$$H_{(x,y,z),(x',y',z')}^{(a,b),(c,d)} = \left \langle (x,y,a,b), (0,z,c,d), \left (0,0, \frac{N'}{z'}, -\frac {N'y'}{x'z'} \right), \left(0, 0, 0, \frac{N'}{x'} \right) \right \rangle$$
of $H_{x,y,z} \oplus H_{x',y',z'} \subset D$.
Note that the latter two elements are generators of $\{ (0,0) \} \oplus H^\bot_{x',y',z'}$ by Lemma \ref{lemSubgrD}. 
Recall that by the proof of Lemma \ref{selfOrthog}, this subgroup has to be contained in $ H_{(x,y,z),(x',y',z')}^{(a,b),(c,d)}$ in order for $H_{(x,y,z),(x',y',z')}^{(a,b),(c,d)}$ to be self-dual isotropic. 
By definition $H_{(x,y,z),(x',y',z')}^{(a,b),(c,d)}$ does not change when replacing $(a,b)$ (respectively $(c,d)$) by an element of its class in $H_{x',y',z'}/H_{x',y',z'}^\bot$.

In general, the second projection of $H_{(x,y,z),(x',y',z')}^{(a,b),(c,d)}$ might be a proper subgroup of $H_{x',y',z'}$. 
However, every self-dual isotropic subgroup $H$ of $D$ can be written as $H = H_{(x,y,z),(x',y',z')}^{(a,b),(c,d)}$ such that $\pi_2(H) = H_{x', y',z'}$ due to the following lemma.

\begin{Lemma}
	\label{lemReprHselfdual}
	For every self-dual isotropic subgroup $H$ of $D = D_{N,N'}$ such that $\pi_1(H) =  H_{x,y,z}$ and $\pi_2(H) = H_{x',y',z'}$, there is a a pair of elements $(a,b), (c,d)$ of $H_{x',y',z'}$ such that $H = H_{(x,y,z),(x',y',z')}^{(a,b),(c,d)}$.
	Both $(a,b)$ and $(c,d)$ are unique in $H_{x',y',z'}/H^\bot_{x',y',z'}$.
\end{Lemma}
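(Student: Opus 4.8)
The plan is to realise $H$ as an extension of its first projection $H_{x,y,z}$ by the kernel of $\pi_1|_H$, and to pin down this kernel explicitly using Lemma \ref{selfOrthog}. The whole statement then reduces to a standard lifting-of-generators argument.

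First I would restrict the projection $\pi_1 \colon D \to D^{(1)}$ to $H$, obtaining a surjection $\pi_1|_H \colon H \to H_{x,y,z}$. Its kernel consists precisely of the elements $(0,0,c,d)\in H$, that is, of $\{0\}\oplus\{\gamma\in D^{(1')}\colon (0,\gamma)\in H\}$. Applying (the proof of) Lemma \ref{selfOrthog} to the orthogonal decomposition $D = D^{(1)}\oplus D^{(1')}$ and the \emph{second} projection, this set of $\gamma$ is exactly $H_{x',y',z'}^\bot = \pi_2(H)^\bot$. By Lemma \ref{lemSubgrD1} (ii) this complement equals $H_{N'/z',\,-N'y'/(x'z'),\,N'/x'}$, which is generated by $(N'/z', -N'y'/(x'z'))$ and $(0, N'/x')$. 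Hence $\ker(\pi_1|_H)$ is generated by precisely the last two elements appearing in the definition of $H_{(x,y,z),(x',y',z')}^{(a,b),(c,d)}$.

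Next, since $H_{x,y,z}$ is generated by $(x,y)$ and $(0,z)$ and $\pi_1|_H$ is surjective, I would choose preimages: pick $(a,b),(c,d)\in D^{(1')}$ with $(x,y,a,b)\in H$ and $(0,z,c,d)\in H$. As $(a,b)=\pi_2(x,y,a,b)$ and $(c,d)=\pi_2(0,z,c,d)$ lie in $\pi_2(H)=H_{x',y',z'}$, these pairs indeed satisfy the required membership $(a,b),(c,d)\in H_{x',y',z'}$. Combining the two lifts with the kernel generators from the previous step produces a generating set of $H$ (a generating set of the image together with a generating set of the kernel generates the whole group), and this is exactly the defining generating set of $H_{(x,y,z),(x',y',z')}^{(a,b),(c,d)}$; therefore $H = H_{(x,y,z),(x',y',z')}^{(a,b),(c,d)}$. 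As a consistency check with the remark preceding the lemma, the second projection of this group is $\langle(a,b),(c,d)\rangle + H_{x',y',z'}^\bot = \pi_2(H) = H_{x',y',z'}$, so the subtlety about a possibly smaller second projection does not occur here. For uniqueness, note that any two lifts of $(x,y)$ differ by an element of $\ker(\pi_1|_H)=\{0\}\oplus H_{x',y',z'}^\bot$, so $(a,b)$ is determined modulo $H_{x',y',z'}^\bot$, i.e. uniquely in $H_{x',y',z'}/H_{x',y',z'}^\bot$; the same argument applies verbatim to $(c,d)$.

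The main obstacle—really the only point demanding care—is the clean identification $\ker(\pi_1|_H)=\{0\}\oplus H_{x',y',z'}^\bot$ and matching it against the prescribed generators $(0,0,N'/z',-N'y'/(x'z'))$ and $(0,0,0,N'/x')$ coming from Lemma \ref{lemSubgrD1} (ii). Once that kernel is correctly pinned down, the existence of the representation and the uniqueness of $(a,b),(c,d)$ modulo $H_{x',y',z'}^\bot$ are immediate from the surjectivity of $\pi_1|_H$.
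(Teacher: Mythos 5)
Your proof is correct and takes essentially the same route as the paper's: both choose lifts $(x,y,a,b),(0,z,c,d)\in H$ of the generators of $\pi_1(H)$ and use Lemma \ref{selfOrthog} (and its proof) to identify the elements of $H$ of the form $(0,0,\ast,\ast)$ with $\{0\}\oplus H_{x',y',z'}^\bot$, which yields both inclusions and the uniqueness of $(a,b),(c,d)$ modulo $H_{x',y',z'}^\bot$. Your kernel--image formulation merely makes explicit the inclusion $H_{(x,y,z),(x',y',z')}^{(a,b),(c,d)}\subseteq H$ that the paper declares ``clear.''
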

\begin{proof} 
	Let $(a,b), (c,d) \in \pi_2(H)$ such that $(x,y,a,b), (0,z,c,d) \in H$. 
	It is clear that
	$$H_{(x,y,z),(x',y',z')}^{(a,b),(c,d)} \subseteq H.$$ For the other inclusion let $\gamma = (\gamma_1, \gamma_2, \gamma_3, \gamma_4) \in H$. Since $(x,y), (0,z)$ generate $\pi_1(H)$ we find $(\gamma_3', \gamma_4') \in \pi_2(H)$ such that $(\gamma_1, \gamma_2, \gamma_3', \gamma_4') \in H_{(x,y,z),(x',y',z')}^{(a,b),(c,d)}$. But then $(\gamma_3, \gamma_4) - (\gamma_3', \gamma_4') \in \pi_2(H)^\bot$ by Lemma \ref{selfOrthog} (and its proof). Thus, $\gamma \in H_{(x,y,z),(x',y',z')}^{(a,b),(c,d)}$.
	
	For another pair of elements $(a',b') \in \pi_2(H)$ with $(x,y,a',b') \in H$ we obtain $(a-a', b-b') \in H_{x',y',z'}^\bot$ by the same argument. Similarly for $(c,d)$.
\end{proof} 

The following lemma gives a criterion for such a subgroup to be self-dual isotropic.
\begin{Lemma}
	\label{condSum}
	For co-isotropic subgroups $H_{x,y,z} \subset D_N$ and $H_{x',y',z'} \subset D_{N'}$ with $N'xz = Nx'z'$, the group 
	$ H_{(x,y,z),(x',y',z')}^{(a,b),(c,d)}$
	is a self-dual isotropic subgroup of $D$ if and only if 
	\begin{eqnarray}
		\label{condABiso}
		\frac{N}{N'}ab + xy & \equiv & 0 \mod N, \\
		\label{condCDiso}
		cd & \equiv & 0 \mod N', \\
		\label{condOrth}
		\frac{N}{N'} (ad + bc) + xz & \equiv & 0 \mod N.
	\end{eqnarray}
\end{Lemma}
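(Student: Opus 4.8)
The plan is to reduce everything to evaluating the quadratic form $Q$ and its associated bilinear form $B$ on the four generators $g_1=(x,y,a,b)$, $g_2=(0,z,c,d)$, $g_3=(0,0,N'/z',-N'y'/(x'z'))$ and $g_4=(0,0,0,N'/x')$ of $H=H_{(x,y,z),(x',y',z')}^{(a,b),(c,d)}$. Since $Q$ is a quadratic form and $B$ is bilinear, $H$ is isotropic precisely when $Q(g_i)=0$ for all $i$ and $B(g_i,g_j)=0$ for all $i\neq j$ (the diagonal brackets $B(g_i,g_i)=2Q(g_i)$ being subsumed by the conditions on $Q$). This reduction simultaneously disposes of the ``only if'' direction: a self-dual isotropic $H$ contains the $g_i$, so $Q$ and $B$ must vanish on them.

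The first real step is to identify which of these finitely many identities are automatic. Because $H_{x',y',z'}$ is co-isotropic, its orthogonal complement $H^\bot_{x',y',z'}$ is isotropic, and $g_3,g_4$ generate $\{(0,0)\}\oplus H^\bot_{x',y',z'}$; hence $Q(g_3)=Q(g_4)=0$ and all brackets among $g_3,g_4$ vanish. Moreover, since $(a,b),(c,d)\in H_{x',y',z'}$ while the last two coordinates of $g_3,g_4$ lie in $H^\bot_{x',y',z'}$, the mixed brackets $B(g_1,g_3),B(g_1,g_4),B(g_2,g_3),B(g_2,g_4)$ vanish by the very definition of the orthogonal complement. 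What remains to impose is therefore only $Q(g_1)=0$, $Q(g_2)=0$ and $B(g_1,g_2)=0$. A direct computation with $Q(x,y,z,w)=xy/N+zw/N'$ gives $Q(g_1)=\tfrac{xy}{N}+\tfrac{ab}{N'}$, $Q(g_2)=\tfrac{cd}{N'}$ and $B(g_1,g_2)=\tfrac{xz}{N}+\tfrac{ad+bc}{N'}$; translating the vanishing of these three elements of $\Q/\Z$ into integer congruences (clearing denominators, using $N'\mid N$) yields exactly the conditions (\ref{condABiso}), (\ref{condCDiso}) and (\ref{condOrth}).

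It then remains to upgrade ``isotropic'' to ``self-dual'', and this is the only delicate point, precisely because $\pi_2(H)$ may be a proper subgroup of $H_{x',y',z'}$, so one cannot read $|H|$ off the two projections naively. I would avoid computing $\ker(\pi_1|_H)$ exactly and instead use a two-sided order estimate. One always has $\pi_1(H)=H_{x,y,z}$ and $\{(0,0)\}\oplus H^\bot_{x',y',z'}\subseteq\ker(\pi_1|_H)$, so
\[ |H| \;\ge\; |H_{x,y,z}|\cdot|H^\bot_{x',y',z'}| \;=\; \frac{N^2}{xz}\cdot x'z' \;=\; NN', \]
the last equality using $N'xz=Nx'z'$ and $|D|=(NN')^2$. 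On the other hand, once the three congruences hold, $H$ is isotropic, so $H\subseteq H^\bot$ and $|H|\le\sqrt{|D|}=NN'$. Combining the two inequalities forces $|H|=NN'$ and hence $H=H^\bot$, i.e. $H$ is self-dual isotropic. The main obstacle is exactly this self-duality step, and resolving it by the order bound rather than by a structural analysis of $H$ is what keeps the argument short and sidesteps the subtlety about $\pi_2(H)$.
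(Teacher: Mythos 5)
Your proof is correct and takes essentially the same route as the paper's: the three congruences are exactly $Q(g_1)=Q(g_2)=B(g_1,g_2)=0$ (all other generator conditions being automatic from co-isotropy of $H_{x',y',z'}$), and self-duality follows from the cardinality bound $|H|\ge |H_{x,y,z}|\cdot|H^\bot_{x',y',z'}|=NN'$ combined with $|H|\le\sqrt{|D|}=NN'$ for isotropic $H$. Your write-up only makes explicit what the paper leaves implicit (and silently corrects a typo there, where $|H_{x,y,z}||H_{x',y',z'}|$ is written in place of $|H_{x,y,z}||H^\bot_{x',y',z'}|$).
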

\begin{proof}
	We abbreviate $H = H_{(x,y,z),(x',y',z')}^{(a,b),(c,d)}$.
	The conditions exactly mean that both $(x,y,a,b)$ and $(0,z,c,d)$ are isotropic and orthogonal. Thus, the "only if" part is clear. For the "if" part, we note that the conditions (\ref{condABiso})-(\ref{condOrth}) imply that $H$ is isotropic.  
	Moreover, for every $\gamma \in H_{x,y,z}$ the set of elements $\gamma' \in H_{x',y',z'}$ such that \( (\gamma, \gamma') \in H \) is a coset of $\pi_2(H)^\bot$.
	As $H_{x',y',z'}^\bot \subseteq \pi_2(H)^\bot$ by construction, there are at least $|H_{x',y',z'}^\bot|$ many elements $\gamma' \in H_{x',y',z'}$ such that \( (\gamma, \gamma') \in H \). Hence, $|H| \geq |H_{x,y,z}||H^\bot_{x',y',z'}| = \frac{N^2}{xz} \cdot \frac{x'z'}{(N')^2} = N N'$, where we used the assumption $N'xz  = Nx'z'$ in the last step. Since $H$ is isotropic, a cardinality argument shows that $|H^\bot| = |H| = NN'$, so $H = H^\bot$. In other words, $H$ is self-dual. 
\end{proof}

We now give examples for self-dual isotropic subgroups of $D_{N,N'}$.

\begin{Beispiel}
	\label{exNy0}
	For $u \in (\Z/N'\Z)^\times$  and two pairs of divisors $x, z$ of $N$ and $x', z'$ of $N'$ such that $xz | N$, $x'z' | N'$, and $N' xz = N x'z'$, the groups 
	$$H_{(x, 0, z),(x', 0, z')}^{(ux', 0), (0, -u^{-1}z')} \qquad \text{ and } \qquad H_{(x, 0, z),(x', 0, z')}^{(0, uz'), (-u^{-1}x', 0)},$$
	are self-dual isotropic subgroups of $D_{N,N'}$.
	The conditions from Lemma \ref{condSum} are easily verified.
\end{Beispiel}

Constructing examples for subgroups $H_{x,y,z}$ with $y \neq 0$ seems to be more difficult. We construct the following ones in the case $N = N'$.
\begin{Beispiel}
	\label{exy-y}
	Let $H_{x,y,z}$ be a co-isotropic subgroup of $D_N$. Then $H_{x,-y,z}$ is co-isotropic according to Lemma \ref{lemSubgrD1}, and for a unit $u \in (\Z/N\Z)^\times$ we have that
	$ H_{(x,y,z),(x,-y,z)}^{ (-ux,u^{-1}y), (0,u^{-1}z)} $
	is a self-dual isotropic subgroup of $D_{N,N}$ by Lemma \ref{condSum}.
\end{Beispiel}
Note that for $y = 0$ this recovers the first family of self-dual isotropic subgroups of $D_{N,N}$ constructed in Example \ref{exNy0}, while for $y \neq 0$ the subgroup constructed in Example \ref{exy-y} is distinct from the ones of  Example \ref{exNy0}.
The natural question that arises is whether or not this list of subgroups is already complete. 
We show in Proposition \ref{exy0} below that when $N'$ is a prime power Example \ref{exNy0} gives all self-dual isotropic subgroups coming from co-isotropic subgroups of the form $H_{x,0,z}$ (that means with $y = 0$) and that when $N'$ is prime other cases cannot occur.
In general, there will be self-dual isotropic subgroups that are not covered by our examples above.

\begin{Lemma}
	Let $H$ be a self-dual isotropic subgroup of $D_{N,N'}$ such that one of the projections $\pi_i(H)$ for $i = 1,2$ is self-dual isotropic. 
	Then the other projection is self-dual isotropic as well and $H = \pi_1(H) \oplus \pi_2(H)$. 
	In particular, $H$ is one of the groups given in Example \ref{exNy0}. 
	\label{lemProjSelfD}
\end{Lemma}
\begin{proof}
	By Lemma \ref{selfOrthog}, if one of the projections is self-dual, the other projection is self-dual as well and $H = \pi_1(H) \oplus \pi_2(H)$. 
	By Lemma \ref{lemSubgrD1} (\ref{lem5}), there are two pairs of divisors $x,z$ of $N$ and $x', z'$ of $N'$ such that $\pi_1(H) = H_{x,0,z}$ and $\pi_2(H) = H_{x',0, z'}$. Hence, $H = H_{x,0,y} \oplus H_{x',0,y'} = H_{(x,0,z),(x',0,y')}^{(a,b),(c,d)}$ for any choice of elements $(a,b),(c,d) \in H_{x',0,y'}$ (as $H_{x',0,y'}$ is self-dual isotropic).
	In particular, $H = H_{(x,0,z),(x',0,y')}^{(x',0),(0,-z')} $ is one of the groups from Example \ref{exNy0}.
\end{proof}

\begin{Proposition}
	\label{exy0}
	Let $N$ be a positive integer and let $N'$ be some divisor of $N$, such that $N' = p^{n'}$ is a prime power. 
	Let $H$ be a self-dual isotropic subgroup of $D_{N, N'}$ with $\pi_1(H) = H_{x, 0, z}$ for a pair of divisors $x, z$ of $N$ or $\pi_2(H) = H_{x', 0, z'}$ for a pair of divisors $x', z'$ of $N'$. Then $H$ is one of the groups given in Example \ref{exNy0}.
\end{Proposition}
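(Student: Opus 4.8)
The plan is to push everything into the prime-power factor $D^{(1')} = (\Z/N'\Z)^2$, where the hyperbolic form over the local ring $\Z/p^{n'}\Z$ is rigid enough to pin down $H$. By Lemma \ref{lemReprHselfdual} I may write $H = H_{(x,y,z),(x',y',z')}^{(a,b),(c,d)}$ with connecting elements $(a,b),(c,d) \in H_{x',y',z'} = \pi_2(H)$, unique modulo $H_{x',y',z'}^\bot$. Writing $x' = p^{\alpha}$, $z' = p^{\gamma}$ and $m = n' - \alpha - \gamma$, the relation $N'xz = Nx'z'$ forces $xz = N/p^{m}$, and the quotient $\overline{H}_2 := H_{x',y',z'}/H_{x',y',z'}^\bot$ is a non-degenerate hyperbolic module of order $p^{2m}$ onto which $(a,b),(c,d)$ descend to a generating system. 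The three congruences of Lemma \ref{condSum} say exactly that, once the relevant twist vanishes, $(a,b)$ and $(c,d)$ are isotropic and that their pairing $ad+bc \equiv -x'z' \pmod{N'}$ is a unit on $\overline{H}_2$. The goal in both cases is to upgrade this to: \emph{both} twists vanish and the two vectors sit on complementary coordinate axes.

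I would treat the hypothesis $\pi_2(H) = H_{x',0,z'}$ (i.e. $y'=0$) first, since it is the easy half and relies only on a coupling already present in the congruences. Here $(a,b),(c,d) \in H_{x',0,z'}$, so I may write $a = x'a_0$, $b = z'b_0$; using $xz = N/p^{m}$ the first congruence (\ref{condABiso}) becomes $xz\,a_0 b_0 + xy \equiv 0 \pmod N$, that is $y + z a_0 b_0 \equiv 0 \pmod{N/x}$. Since $\pi_1(H)$ is co-isotropic by Lemma \ref{selfOrthog}, Lemma \ref{lemSubgrD1}(iv) gives $xz \mid N$, hence $z \mid N/x$; reducing modulo $z$ yields $y \equiv 0 \pmod z$, and the normalisation $0 \le y < z$ from Lemma \ref{lemSubgrD1}(i) forces $y = 0$. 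Thus both twists vanish, $(a,b)$ becomes isotropic as well, and one is reduced to the normalisation step described below.

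For the hypothesis $\pi_1(H) = H_{x,0,z}$ (i.e. $y=0$) there is no analogous coupling available for $y'$ (there is no third factor to link it to), and the vanishing of the second twist is the heart of the matter. Now both $(a,b)$ and $(c,d)$ are isotropic in $(\Z/p^{n'}\Z)^2$, so $v_p(a)+v_p(b) \ge n'$ and $v_p(c)+v_p(d) \ge n'$, while $v_p(ad+bc) = \alpha+\gamma$ (the degenerate case $m=0$ being immediate from Lemma \ref{lemSubgrD1}(v)). A valuation analysis then indicates that, after possibly swapping the two vectors, $v_p(b) \ge m$ and $v_p(c) \ge m$, so the images of $(a,b)$ and $(c,d)$ in $\overline{H}_2 \cong (\Z/p^{m}\Z)^2$ lie on the two coordinate axes and form a hyperbolic basis. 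The technical core is to turn this into the statements $y'=0$ and an on-the-nose normalisation: once both twists vanish one chooses representatives $(a,b) = (ux',0)$ and $(c,d) = (0,d)$ for a unit $u$, and then $ad+bc \equiv -x'z' \pmod{N'}$ forces $d \equiv -u^{-1}z'$, which is precisely the shape of the groups in Example \ref{exNy0} (the two families corresponding to the two axis assignments), with the constraints $x'z' \mid N'$, $xz \mid N$ and $N'xz = Nx'z'$ already built in.

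The main obstacle is exactly this rigidity step over the local ring $\Z/p^{n'}\Z$ in the case $y=0$: unlike the field case $n'=1$, isotropy of a vector only gives $v_p(a)+v_p(b) \ge n'$ rather than forcing a coordinate to vanish, the maximal isotropic subgroups of $(\Z/p^{n'}\Z)^2$ are no longer just the two axes, and the representative of the connecting data is only determined modulo $H_{x',y',z'}^\bot$; so one must carefully track $p$-adic valuations and exploit both the generation of $\overline{H}_2$ and the unit pairing to conclude that $y' = 0$. This is precisely where the prime-power hypothesis on $N'$ is indispensable: for composite $N'$ the Chinese Remainder Theorem splits $D^{(1')}$ into independent $p$-adic hyperbolic planes, the axis can be chosen independently at each prime, and this genuinely produces self-dual isotropic subgroups outside the list of Example \ref{exNy0}.
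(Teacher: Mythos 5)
Your treatment of the case $\pi_2(H)=H_{x',0,z'}$ is correct and is in fact exactly the paper's argument: from $x'\mid a$, $z'\mid b$ one gets $xz\mid \tfrac{N}{N'}ab$, hence $z\mid y$, and the normalisation $0\le y<z$ forces $y=0$, reducing to the other case. The genuine gap is that the other case, $\pi_1(H)=H_{x,0,z}$, which you yourself call ``the technical core'' and ``the main obstacle'', is never actually proved: you only assert that ``a valuation analysis indicates'' the conclusion. Worse, the framework you propose for that step is circular. You want to read everything off inside $\overline{H}_2=H_{x',y',z'}/H_{x',y',z'}^{\bot}$ after declaring $\overline{H}_2\cong(\Z/p^m\Z)^2$ with two ``coordinate axes'', but this structure is not available before one knows $y'=0$; it genuinely fails for co-isotropic subgroups with $y'\neq 0$. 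Concretely, for $N'=p^2$ with $p$ odd, the subgroup $H_{1,1,p}\subset(\Z/p^2\Z)^2$ is co-isotropic by Lemma \ref{lemSubgrD1}(iv), its orthogonal complement is the cyclic group $\langle(p,-p)\rangle$, and the quotient $H_{1,1,p}/H_{1,1,p}^{\bot}$ is cyclic of order $p^2$, not $(\Z/p\Z)^2$. So the hyperbolic shape of $\overline{H}_2$ is part of what has to be proven, and even if one establishes it abstractly (which is possible, since a nondegenerate quotient generated by two isotropic vectors with unit pairing must split as claimed), one would still need a separate argument converting that abstract splitting into the statement $y'=0$ about the embedding $H_{x',y',z'}\subset(\Z/N'\Z)^2$; nothing in your text supplies this.

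The way the paper closes exactly this gap is to avoid the quotient and work with $(a,b),(c,d)$ inside $(\Z/N'\Z)^2$ itself, with valuation bounds tied to $v_p(a)$ and $v_p(d)$ rather than your weaker uniform bounds $v_p(b),v_p(c)\ge m$. From $ab\equiv cd\equiv 0$ and $ad+bc\equiv -x'z'\pmod{N'}$ one gets $v_p(ad)+v_p(bc)\ge 2n'$, so after the swap $N'\mid bc$ and $ad\equiv -x'z'$; writing $\alpha=v_p(a)$ and $x'z'=p^{k'}$ this pins down $v_p(d)=k'-\alpha$, and isotropy gives $v_p(b)\ge n'-\alpha$ and $v_p(c)\ge n'-k'+\alpha$, i.e. $a\mid c$ and $d\mid b$. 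A column reduction then shows $\langle(a,b),(c,d)\rangle=H_{p^{\alpha},0,p^{k'-\alpha}}$. Since by Lemma \ref{lemReprHselfdual} this subgroup together with $H_{x',y',z'}^{\bot}$ generates $\pi_2(H)$, and both groups already have the same order $p^{2n'-k'}$, they coincide; uniqueness of the normal form in Lemma \ref{lemSubgrD1}(i) then forces $y'=0$, $x'=p^{\alpha}$, $z'=p^{k'-\alpha}$. Finally $(0,b),(c,0)\in H_{x',0,z'}^{\bot}$, so the uniqueness statement of Lemma \ref{lemReprHselfdual} lets one take $b=c=0$, and the pairing condition gives $d\equiv-(a/x')^{-1}z'$, which is precisely Example \ref{exNy0}. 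Your proposal has the right skeleton and the right reduction between the two cases, but without this concrete bookkeeping the central claim of the proposition remains unproved.
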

\begin{proof}
	By Lemma \ref{lemReprHselfdual}, the group $H$ admits a presentation as $H = H^{(a,b),(c,d)}_{(x,y,z),(x',y',z')}$. 
	The case where one of the projections $\pi_1(H)$ and $\pi_2(H)$ is self-dual isotropic is Lemma \ref{lemProjSelfD}.
	 
	When none of the projections is self-dual, we have $x'z' =p^{k'}$ for some $k' < n'$ by Lemma \ref{lemSubgrD1} (\ref{lem4}) and (\ref{lem5}). 
	Let us now consider the case $y = 0$.
	Lemma \ref{condSum} then implies
	\begin{align*}
		ab &\equiv 0 \mod N',  & 
		cd &\equiv 0 \mod N' & 
		& \text{and} &
		ad + bc &\equiv -x'z' \mod N'. & &
	\end{align*}
	From the first two conditions we obtain that
	$(N')^2|abcd$ and thus  
	$N'$
	divides at least one of $ad$ and $bc$ as $N'$ is a prime power by assumption. 
	We assume that $N'$ divides $bc$, and the other case follows analogously.
	By the third condition, $ad \equiv -x'z' \mod N'$.
	We write $a = a' p^\alpha$ for some $a' \in (\Z/N'\Z)^\times$. Then $ d \equiv -(a')^{-1} p^{k'-\alpha} \mod p^{n'-\alpha}$. In particular, $d$ is of the form $d' p^{k'-\alpha}$ for some $d' \in (\Z/N'\Z)^\times$. 
	
	We consider the subgroup $H_2$ of $D_{N'}$ generated by $(a,b)$ and $(c,d)$. By the above reformulations of conditions (\ref{condABiso}) and (\ref{condCDiso}), we get
	$ p^\alpha|c$ and similarly $p^{k'-\alpha}|b$. 
	Moreover, as $k' < n'$, the prime $p$ divides both $c/p^\alpha$ and $b/p^{k'- \alpha}$.
	Let us now consider the element of $H_2$ given by
	$$ (a,b) - \frac{(d')^{-1}b}{p^{k' - \alpha}} (c,d) = \left( a - \frac{(d')^{-1}bc}{p^{k' - \alpha}}, 0 \right) = \left( p^\alpha \left( a' - (d')^{-1} \frac{bc}{p^{k'}}\right), 0 \right).$$  
	As $a'$ is a unit in $\Z/N'\Z$ and $p$ divides $\frac{bc}{p^{k'}}$ (using the condition that $k' < n'$), we obtain that $a' - (d')^{-1} \frac{bc}{p^{k'}}$ is a unit in $\Z/N'\Z$.
	Hence, we get $(p^\alpha,0) \in H_2$ and similarly $(0, p^{k' - \alpha}) \in H_2$.
	Moreover, for any $(e,f) \in H_2$ we have $ p^\alpha | e$ and $p^{k'-\alpha} | f$. 
	But this means that $\langle (a,b), (c,d) \rangle = H_{p^\alpha, 0, p^{k'-\alpha}}$.
	
	We also know that $\langle (a,b), (c,d) \rangle \subseteq \pi_2(H)$  and thus $\pi_2(H) = H_{p^\alpha, 0, p^{k'-\alpha}}$, as both groups have the same cardinality.
	Moreover, $(0, b), (c,0) \in H_2^\bot = H_{p^{n'-k'+\alpha}, 0, p^{n' - \alpha}}$. Thus, we may choose $b = c = 0$.
	Thus, $H$ is as in Example \ref{exNy0}.
	
	Let us now assume that $\pi_2(H) =  H_{x', 0, z'}$. Let $a, b, c, d$ as in Lemma \ref{lemReprHselfdual} and \ref{condSum}, such that $H = H^{(a,b),(c,d)}_{(x,y,z),(x',0,z')}$. Then, $x'|a$ and $z'|b$ by definition. Thus, $xz|\frac{N}{N'}ab$ and by Lemma \ref{condSum} we get $xz|xy$. As $0 \leq 
	y < z$, this implies $y = 0$. The claim follows from the first assertion.
\end{proof}

\begin{Korollar}
	\label{corSelfDualNp}
	Let $N$ be a positive integer and $N' = p$ some prime factor of $N$.
	Let $H$ be a self-dual isotropic subgroup of $D_{N, N'}$. Then $H$ is one of the groups given in Example \ref{exNy0}.
\end{Korollar}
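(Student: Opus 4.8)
The plan is to derive this directly from Proposition \ref{exy0}, since $N' = p = p^1$ is already a prime power and so the hypotheses of that proposition are available to us. Proposition \ref{exy0} classifies the self-dual isotropic subgroups $H$ for which one of the two projections has vanishing middle parameter, i.e. $\pi_1(H) = H_{x,0,z}$ or $\pi_2(H) = H_{x',0,z'}$, and shows that any such $H$ is one of the groups in Example \ref{exNy0}. Thus the only thing I really need to verify is that when $N' = p$, the second projection $\pi_2(H)$ is automatically of this shape, so that the hypothesis of Proposition \ref{exy0} is met for \emph{every} self-dual isotropic $H$.

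First I would apply Lemma \ref{selfOrthog} to the orthogonal decomposition $D_{N,N'} = D^{(1)} \oplus D^{(1')}$, which guarantees that $\pi_2(H)$ is a co-isotropic subgroup of $D^{(1')} = (\Z/p\Z)^2$. Next, since $N' = p$ is prime and hence square-free, I would invoke Lemma \ref{lemCoIsoP} with $N$ replaced by the square-free integer $N'$ (the statement and its proof are identical after relabeling) to conclude that every co-isotropic subgroup of $D^{(1')}$ has the form $H_{x',0,z'}$ for divisors $x', z'$ of $p$ with $x' z' \mid p$. In particular, the middle parameter $y'$ is forced to be $0$.

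Having established $\pi_2(H) = H_{x',0,z'}$, the hypothesis of Proposition \ref{exy0} is satisfied, and its conclusion immediately yields that $H$ is one of the groups from Example \ref{exNy0}, completing the argument. I do not expect any genuine obstacle here: essentially all the content is carried by Proposition \ref{exy0}, and the only bookkeeping is the harmless relabeling of $N$ by $N'$ in Lemma \ref{lemCoIsoP}, together with the observation that the square-freeness of $N' = p$ rules out any co-isotropic subgroup of $(\Z/p\Z)^2$ with a nonzero middle parameter.
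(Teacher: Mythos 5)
Your proposal is correct and is essentially the paper's own proof: the paper likewise applies Lemma \ref{lemCoIsoP} (to the square-free modulus $N' = p$) to force $\pi_2(H) = H_{x',0,z'}$ and then concludes via Proposition \ref{exy0}. You have merely made explicit the two steps the paper leaves implicit — the appeal to Lemma \ref{selfOrthog} for co-isotropy of the projection and the relabeling of $N$ by $N'$ — which is harmless.
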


\begin{proof}
	By Lemma \ref{lemCoIsoP}, we get that $\pi_2(H)$ is of the form $H_{x', 0, y'}$. The assertion then follows from Proposition \ref{exy0}.
\end{proof}

For general positive integers $N'$, we can construct all self-dual isotropic subgroups of $D_{N,N'}$ in the $y = 0$ case from the prime factors of $N'$ as in Lemma \ref{pSubGrpDecom}. In the special case that $N'$ is square-free other cases cannot occur by Lemma \ref{lemCoIsoP}.

In contrast to the case $N' = 1$, the characteristic functions of self-dual isotropic subgroups of $D_{N,N'}$ for $N' > 1$ are in general not linearly independent.
In the case where $N'$ is prime, we can explicitly give the relations among the characteristic functions of self-dual isotropic subgroups, and thus give a basis of the space of invariants. We generalise the result from \cite{Bieker}, which only considered the case $N = N'$, as suggested in \cite[Section 5]{Zemel2021} to arbitrary $N$. For a positive integer $M$ we continue to denote by $\sigma_0(M)$ the number of its divisors.
\begin{Proposition}
	\label{propInvP}
	Let $N$ be a positive integer and $N' = p$ be a prime factor of $N$. The space $\C[D_{N,p}]^{\SLZ}$ is generated by the characteristic functions of the self-dual isotropic subgroups 
	$$ H_{d,0,N/d} \oplus H_{1, 0, p}, \hspace{1.7mm}  H_{d, 0, N/d} \oplus H_{p,0,1}, \hspace{1.7mm}  H_{(d',0,N/(pd')),(1,0,1)}^{(u,0), (0, -u^{-1})}, \hspace{1.7mm}  \text{ and } \hspace{1.7mm}  H_{(d',0,N/(pd')),(1,0,1)}^{(0, -u^{-1}), (u,0)},$$
	for divisors $d$ of $N$, divisors $d'$ of $N/p$, and $u \in (\Z/p\Z)^\times$, subject to the linear relations
	\begin{align*}
		0 & =  v^{H_{d',0,N/d'} \oplus H_{1, 0, p}}-v^{H_{d',0,N/d'} \oplus H_{p, 0, 1}} - v^{H_{pd',0,N/(pd')} \oplus H_{1, 0, p}} + v^{H_{pd',0,N/(pd')} \oplus H_{p, 0, 1}} \\
		& \hspace*{2cm} + \sum_{u = 1}^{p-1} \left( -v^{H_{(d',0,N/(pd')),(1,0,1)}^{(u,0), (0, -u^{-1})}} + v^{H_{(d',0,N/(pd')),(1,0,1)}^{(0, -u^{-1}), (u,0)}} \right) 
	\end{align*}
	for each divisor $d'$ of $N/p$.
	Moreover, any linear relation among the characteristic functions of self-dual isotropic subgroups of $D_{N,p}$ is a linear combination of the linear relations given above. In particular, $\dim(\C[D_{N,p}]^{\SLZ}) = (2p-3)\sigma_0(N/p) + 2 \sigma_0(N)$.
\end{Proposition}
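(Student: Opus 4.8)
The plan is to combine the spanning statement of Section 3 with an explicit enumeration of the relevant subgroups, verify the displayed relations by a direct computation, and then pin down the dimension by a linear independence argument.

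First I would fix the spanning set. By Theorem \ref{IsoSubIn} the space $\mathcal{M}_{0,D_{N,p}} = \C[D_{N,p}]^{\SLZ}$ is spanned by the characteristic functions of self-dual isotropic subgroups, and by Corollary \ref{corSelfDualNp} every such subgroup occurs in Example \ref{exNy0}. Since the only self-dual isotropic subgroups of $D^{(1')} = (\Z/p\Z)^2$ are the two isotropic lines $H_{1,0,p}$ and $H_{p,0,1}$, sorting the subgroups by the shape of the second projection (self-dual of order $p$, which by Lemma \ref{selfOrthog} forces the first projection to be self-dual of order $N$, versus the full group $(\Z/p\Z)^2$) gives exactly the four families listed. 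A short check of the explicit generators in Example \ref{exNy0} shows these subgroups are pairwise distinct, so the spanning set has cardinality $2\sigma_0(N) + 2(p-1)\sigma_0(N/p)$.

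Next I would verify the relation attached to a fixed divisor $d'$ of $N/p$ by a direct computation in $\C[D_{N,p}]$: for each $\gamma = (x,y,z,w)$ the coefficient of $\ef_\gamma$ equals the signed number of subgroups in the relation that contain $\gamma$, and membership in each group is a congruence condition read off from the generators. One then checks that the positive and negative contributions cancel for every $\gamma$; structurally, the two order-$N$ subgroups $H_{d',0,N/d'}$ and $H_{pd',0,N/(pd')}$ both sit with index $p$ inside the order-$Np$ group $H_{d',0,N/(pd')}$, and the sum over $a \in (\Z/p\Z)^*$ accounts for the resulting defect against the mixed subgroups whose first projection is this order-$Np$ group. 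That the $\sigma_0(N/p)$ relations are linearly independent is then immediate: the mixed generator $v^{H_{(d',0,N/(pd')),(1,0,1)}^{(a,0),(0,-a^{-1})}}$ occurs in the relation indexed by $d'$ and in no other, so no nontrivial combination of the relations can vanish.

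Finally, for the dimension I would use the relation for each $d'$ to eliminate the single vector $v^{H_{pd',0,N/(pd')}\oplus H_{p,0,1}}$ (which occurs there with coefficient $+1$); as $d'$ ranges over divisors of $N/p$ the values $pd'$ are distinct divisors of $N$, so these are $\sigma_0(N/p)$ distinct vectors, leaving a reduced set of size $2\sigma_0(N) + (2p-3)\sigma_0(N/p)$. Proving that this reduced set is linearly independent simultaneously shows that the stated relations generate all relations and yields the claimed dimension. I would do this by ordering the remaining subgroups and attaching to each a witness element $\gamma_H \in H$ (for instance $(d,N/d,1,0)$ for $H_{d,0,N/d}\oplus H_{1,0,p}$, the value $1$ in the third coordinate separating it from the family with $H_{p,0,1}$, and the defining generators themselves for the mixed families, which are separated by $d'$ and by $a$) so that the incidence matrix of $\ef_{\gamma_H}$-coefficients is triangular with unit diagonal. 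The main obstacle is precisely this last step: choosing the witnesses and the total order uniformly in $N$, $p$, $d$ and $a$, and verifying that each witness lies in its own subgroup but in none of the later ones, which is the genuinely combinatorial heart of the argument.
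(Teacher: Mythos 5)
Your proposal follows the paper's own route for the two parts of the argument that the paper actually writes out: spanning is obtained from Theorem \ref{IsoSubIn} together with Corollary \ref{corSelfDualNp} (with the same sorting of the groups of Example \ref{exNy0} into the four listed families, according to whether the second projection is one of the two isotropic lines of $(\Z/p\Z)^2$ or the whole plane), and the relation attached to a divisor $d'$ of $N/p$ is verified exactly as in the paper, by checking that the signed incidence counts of each $\ef_\gamma$ cancel. The paper carries out this case analysis explicitly (elements $(wd',xN/d',y,0)$ with $p \mid w$ or not and $y = 0$ or not, then elements of the mixed groups), whereas you only assert that "one then checks" the cancellation, but the method is identical, and your structural explanation (both order-$N$ groups sit with index $p$ inside $H_{d',0,N/(pd')}$, the first projection of the mixed groups) is correct.

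The one place where you genuinely diverge is the dimension formula, and there your write-up has an admitted gap. The paper disposes of this point with "the dimension formula follows immediately"; you correctly observe that what is needed is that the $\sigma_0(N/p)$ listed relations exhaust \emph{all} relations, equivalently that the reduced family of $2\sigma_0(N)+(2p-3)\sigma_0(N/p)$ characteristic functions is linearly independent, and you propose a witness-element/triangular-incidence argument --- but you do not execute it and you yourself flag it as the main obstacle. Be aware that this step is not a formality: writing $H^{(5)}_{a,d'}$ for $H_{(d',0,N/(pd')),(1,0,1)}^{(a,0),(0,-a^{-1})}$, the natural witness $(d',0,a,0)$ of $H^{(5)}_{a,d'}$ also lies in $H_{d,0,N/d}\oplus H_{1,0,p}$ for \emph{every} divisor $d$ of $d'$, and it lies in $H^{(5)}_{b,d''}$ with $b = a(d'/d'')^{-1}$ whenever $d''$ properly divides $d'$ and $p \nmid d'/d''$; the improved witness $(d',N/d',a,0)$ still lies in $H_{d',0,N/d'}\oplus H_{1,0,p}$. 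So the witnesses and the total order must be chosen jointly, and no single "greedy" choice works uniformly. To be fair, the paper's proof omits exactly the same verification, so your proposal is no less complete than the published argument; but since the dimension claim is part of the Proposition, you should either carry this step out (reducing first to the prime-power case $N = p^k$ via the orthogonal $p$-group decomposition and Lemma \ref{pSubGrpDecom}, which turns the invariant space into a tensor product with the factor handled by Corollary \ref{corBasis} and shrinks the bookkeeping considerably), or fall back on the independently proved dimension formula of \cite{Zemel2021} to certify that no further relations exist.
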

\begin{proof}
	
	The space $\C[D_{N,p}]^{\SLZ}$ is spanned by the characteristic functions of the self-dual isotropic subgroups given above by Theorem \ref{IsoSubIn} and Corollary \ref{corSelfDualNp}.
	
	Let $d'$ be a divisor of $N/p$.
	In order to check the linear relation, we use the shorthand notation
	$$H^{(1)} = H_{d',0,N/d'} \oplus H_{1, 0, p}, \hspace{1.7mm} H^{(2)} = H_{d',0,N/d'} \oplus H_{p,0,1}, \hspace{1.7mm} H^{(3)} = H_{pd',0,N/(pd')} \oplus H_{1, 0, p},  $$
	$$  H^{(4)} = H_{pd',0,N/(pd')} \oplus H_{p,0,1},\hspace{1.7mm}  H^{(5)}_u = H_{(d',0,N/(pd')),(1,0,1)}^{(u,0), (0, -u^{-1})} \hspace{1.7mm} \text{and} \hspace{1.7mm}  H^{(6)}_u = H_{(d',0,N/(pd')),(1,0,1)}^{(0, -u^{-1}), (u,0)},$$
	for $u \in (\Z/p\Z)^\times$.
	We claim that
	$$ v^{H^{(1)}}-v^{H^{(2)}} - v^{H^{(3)}} + v^{H^{(4)}} + \sum_{u = 1}^{p-1} \left( -v^{H^{(5)}_u} + v^{H^{(6)}_u} \right) = 0.$$
	We check that the coefficients of $\ef_\gamma$ sum to 0 for every $\gamma \in D_{N,p}$. 
	Let $\gamma = (wd',xN/d',y,0) \in H^{(1)}$. 
	If $p | w$ and $y = 0$, then $\gamma$ is clearly contained in all of the above subgroups.
	If $p|w$ and $y \neq 0$, then $\gamma \in H^{(3)}$ and $\gamma$ is not contained in any of the other groups.
	If $p \nmid w$ and $y = 0$, we have $\gamma \in H^{(2)}$, and if $p \nmid w$ and $y \neq 0$, we find $\gamma \in H_{w^{-1}y}^{(5)}$. We argue similarly for $H^{(2)}, H^{(3)},$ and $H^{(4)}$.
	
	Let now $\gamma = (wd',xN/(pd'),y,z) \in H_u^{(5)}$ for some $u \in (\Z/p\Z)^\times$. 
	If $p|w$ or $p|x$, then necessarily $y = 0$ or $z = 0$, respectively, and we are in one of the cases discussed above. Otherwise, if $p \nmid w$ and $p \nmid x$, $\gamma$ is not contained in any of the groups $H^{(1)}, H^{(2)}, H^{(3)},$ and $H^{(4)}$. 
	Thus, $y = w u$ and $ z = -xu^{(-1)}$. We set $u' = x^{-1} w u = x^{-1}y$. Then $z = -u^{-1}x = -w(u')^{-1}$ and $y = wu = xu'$, so $\gamma \in H_{u'}^{(6)}$. 
	This establishes the linear relations among the characteristic functions.
	
	In order to prove the dimension formula, we claim that we can remove the characteristic functions of the subgroups $H^{(1)} = H_{d',0,N/d'} \oplus H_{1,0,p}$ for all divisors $d'$ of $N/p$ from the generating set to get a basis of $\C[D_{N,p}]^{\SLZ}$. The remaining set of characteristic functions still spans $\C[D_{N,p}]^{\SLZ}$ due to the linear relations that we verified above. It remains to show that this smaller set is linearly independent. But this follows from the above observations that there are enough elements that appear exactly in two of the self-dual isotropic subgroups.
	For example, by considering the coefficient of $(d', N/d', 0, 0)$, which for all $d'$ is contained precisely in the corresponding groups $H^{(1)}$ and $H^{(2)}$, we see that the coefficient of the $H^{(2)}$ in any representation of 0 as a linear combination of the remaining characteristic functions has to vanish. In a similar fashion we see that also all other coefficients vanish.  
	
	As $D_{N,p}$ admits $2 \sigma_0(N) + (2p-2) \sigma_0(N/p)$ self-dual isotropic subgroups, the number of elements in the basis of $\C[D_{N,p}]^{\SLZ}$ constructed above is given by $(2p-2)\sigma_0(N/p) + 2 \sigma_0(N)  - \sigma_0(N/p)= (2p-3)\sigma_0(N/p) + 2 \sigma_0(N)$.
\end{proof}

This immediately yields a dimension formula for $\C[D_{N,N'}]^{\SLZ}$ for square-free $N'$ using the factorisation of the space of invariants from \cite[Remark 3.2]{EhlSkor}. 
\begin{Korollar}
	For an integer $N$ with prime factorisation $N = \prod_{i = 1}^{k} p_i^{n_i}$ and a square-free divisor $N' = \prod_{i = 1}^{k'} p_i$ of $N$ for some $0 \leq k' \leq k$, the space $\C[D_{N,N'}]^{\SLZ}$ has dimension $$  \sigma_0\left( \prod_{i = k'+1}^k p_i^{n_i}\right) \cdot \prod_{i=1}^{k'} \left( (2p_i-3)\sigma_0(p_i^{n_i - 1}) + 2 \sigma_0(p_i^{n_i}) \right)=  \prod_{i=k'+1}^{k} (n_i + 1)  \cdot \prod_{i=1}^{k'} ((2p_i - 1)n_i+2).$$
\end{Korollar}
One can check that this formula recovers a special case of \cite[Theorem 5.4]{Zemel2021}, which gives a dimension formula for $\C[D]^{\SLZ}$ for a more general class of discriminant forms $D$ (including $D_{N,N'}$ for all positive integers $N$ and divisors $N'$ of $N$), compare also the discussion following Corollary 5.5 in \cite{Zemel2021}.

\section{Construction of Modular Units for $\Gamma_{L_{N,N'}}$}

We use Borcherds products to construct analogues of modular units in the setting of generalised Hilbert modular forms considered above. That means that we construct weakly holomorphic modular forms with respect to $\Gamma_{L_{N,N'}}$ (for the lattice $L_{N,N'}$ from Section \ref{secLN}) as in Subsection \ref{secLift} with their divisors supported on the boundary of the varieties considered in Subsection \ref{secModOrth} (which are essentially products of two modular curves). 
More precisely, we explicitly evaluate the Borcherds products considered in Theorem \ref{corBP2g} for the invariant vectors for the corresponding discriminant form $D_{N,N'}$ of $L_{N,N'}$ that we constructed in the previous section.

\begin{Proposition}
	\label{exLiftBasis}
	Let $H$ be one of the self-dual isotropic subgroups considered in Example \ref{exNy0}, namely
	$$H = H_{(x, 0, z),(x', 0, z')}^{(ux', 0), (0, -u^{-1}z')} \qquad \text{ or } \qquad H_{(x, 0, z),(x', 0, z')}^{(0, uz'), (-u^{-1}x', 0)},$$
	for $u \in (\Z/N'\Z)^\times$ and two pairs of divisors $x, z$ of $N$ and  $x', z'$ of $N'$ such that $x z |N$, $ x' z' | N'$, and $N'x z = N x' z'$.
	Then the Borcherds lift of its characteristic function $v^H \in \C[D_{N,N'}]^{\SLZ}$ is given by
	$$ \Psi(\tau_1, \tau_2) = C \eta \left(\frac{xz' \tau_2}{N'} - \frac{uxz}{N} \right) \cdot \eta \left( \frac{N'x}{z'} \tau_1 \right)$$
	in the first case and 
	$$ \Psi(\tau_1, \tau_2) = C \eta \left({xx' \tau_1} - \frac{uxz}{N} \right) \cdot \eta \left( \frac{x}{x'} \tau_2 \right)$$
	in the second case, where $C \in \C^\times$ is a constant.
\end{Proposition}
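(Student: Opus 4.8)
The plan is to feed $F=v^H$ straight into the product expansion of Corollary~\ref{corBP2g} and to reassemble the resulting infinite product into one $\eta$-factor per variable. For $F=v^H$ the Fourier coefficients are simply $c_\gamma=1$ if $\gamma\in H$ and $c_\gamma=0$ otherwise, so everything is governed by which elements of $H$ lie in the two coordinate planes $\{(0,x,0,w)\}$ and $\{(0,x,z,0)\}$ indexing the two factors of the product. I would first read off $H\cap\{(0,x,0,w)\}$ and $H\cap\{(0,x,z,0)\}$ from the explicit generators in Example~\ref{exNy0}: writing $H=\langle (d_1,0,ad_3,0),\,(0,d_2,0,-a^{-1}d_4),\,(0,0,N'/d_4,0),\,(0,0,0,N'/d_3)\rangle$ in the first case, the requirement that a combination of generators have vanishing first and third coordinates cuts out a subgroup of the $(x,w)$-plane, and symmetrically for the $(x,z)$-plane.

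The key algebraic step is to collapse the inner product over $x$. For a fixed fourth coordinate $w$ the set $S_w=\{x:(0,x,0,w)\in H\}$ is a coset of the cyclic subgroup $S_0=\{x:(0,x,0,0)\in H\}$, say of order $e$; since $\{\zeta_N^x:x\in S_0\}$ is exactly the set of $e$-th roots of unity, one has $\prod_{x\in S_0}(1-\zeta_N^x t)=1-t^{e}$, and for a coset $S_w=x_w+S_0$ this gives $\prod_{x\in S_w}(1-\zeta_N^x t)=1-\zeta_N^{e x_w}t^{e}$. Taking $t=q_1^{\lambda_2}$ reduces each fibre to a single binomial $1-\zeta_N^{e x_w}q_1^{e\lambda_2}$, so the $\tau_1$-factor becomes a genuine $\eta$-product $\prod_{m\ge1}\bigl(1-(\text{root})\,q_1^{(\text{step})\,m}\bigr)$. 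Comparing with $\eta(A\tau_1+B)=\eb\!\bigl((A\tau_1+B)/24\bigr)\prod_{m\ge1}\bigl(1-\eb(mB)q_1^{Am}\bigr)$ lets me read off the linear coefficient $A=d_1d_4$ and the shift $B=\tfrac{a d_1 d_2}{N}$ (up to sign); here the numerical relation $N'd_1d_2=Nd_3d_4$ from Example~\ref{exNy0} is precisely what turns the raw exponents into these values, and the same analysis applied to $\{(0,x,z,0)\}$ produces the $\eta(\tfrac{d_1}{d_4}\tau_2)$-factor.

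It then remains to match prefactors: the Weyl-vector exponents $q_1^{\rho_{\tilde\kappa'}}$ and $q_2^{\rho_{\tilde\kappa}/N'}$ supplied by Definition~\ref{WeylVect} must agree with the $q^{A/24}$ leading powers of the two $\eta$-factors. This is the step I expect to be the main obstacle, since it demands that the regularised $\tfrac{1}{24}$-normalisation of the Weyl vector conspire exactly with the level $A$ read off from the collapsed product. I would settle it by evaluating $\rho_{\tilde\kappa'}$ and $\rho_{\tilde\kappa}$ explicitly in terms of $d_1,d_2,d_3,d_4$ via the counts $|H\cap\{(0,x,0,w)\}|$ and $|H\cap\{(0,x,z,0)\}|$ and the defining relation, so that any remaining $q$-powers are constants absorbed into $C$, together with tracking the $m$-dependence of the phase $\zeta_N^{e x_w}$ to confirm the shift $B$.

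Finally, the second case $H=H_{(d_1,0,d_2),(d_3,0,d_4)}^{(0,-a^{-1}d_4),(ad_3,0)}$ is obtained from the first by interchanging the two superscript pairs. This exchanges the two isotropic rays of $K'$ and hence the $\tau_1$- and $\tau_2$-factors, so rather than repeat the computation I would deduce its expansion formally from the first case, checking that the interchange carries $\eta(d_1d_4\tau_1+\tfrac{ad_1d_2}{N})\,\eta(\tfrac{d_1}{d_4}\tau_2)$ to $\eta(\tfrac{d_1d_3}{N'}\tau_2-\tfrac{a^{-1}d_1d_2}{N})\,\eta(\tfrac{N'd_1}{d_3}\tau_1)$.
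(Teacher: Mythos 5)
Your plan follows the paper's proof step for step: specialise Corollary \ref{corBP2g} to $F=v^H$, determine the two fibres of $H$ over the isotropic directions from the explicit generators, collapse each inner product over $x$ by the roots-of-unity identity, and fix the leading $q$-powers by counting; the second case is likewise not computed separately in the paper. Your coset version of the collapse, $\prod_{x\in x_w+S_0}(1-\zeta_N^x t)=1-\zeta_N^{ex_w}t^{e}$ with $e=d_1$, is correct and is exactly the paper's computation with the explicit parametrisation $x=\lambda'ad_2+x'N/d_1$.

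The genuine danger sits in the step you yourself flag as the main obstacle, and your plan as written resolves it the wrong way around. Computing the two counts gives $|H\cap\{(0,x,z,0)\}|=d_1d_4$ and $|H\cap\{(0,x,0,w)\}|=Nd_3/d_2=N'd_1/d_4$. The $q_1$-factor of Corollary \ref{corBP2g} runs over the classes $(0,x,0,\lambda_2)$ and collapses to $\prod_{\lambda'\geq 1}\bigl(1-\eb(\lambda'(d_1d_4\tau_1\pm ad_1d_2/N))\bigr)$, so it needs leading exponent $d_1d_4/24$; the $q_2$-factor runs over the classes $(0,x,\lambda_1,0)$ and needs $d_1/(24d_4)$. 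Hence each factor must be paired with the count of the \emph{other} fibre: $q_1$ with $\tfrac{1}{24}|H\cap\{(0,x,z,0)\}|$ and $q_2$ with $\tfrac{1}{24N'}|H\cap\{(0,x,0,w)\}|$. Your pairing --- $q_1^{\rho_{\tilde\kappa'}}$ with $\rho_{\tilde\kappa'}$ read literally from Definition \ref{WeylVect}, i.e.\ from the $(0,x,0,w)$-count --- produces $q_1^{N'd_1/(24d_4)}$ against a product needing $q_1^{d_1d_4/24}$, and the discrepancy $q_1^{(N'd_1/d_4-d_1d_4)/24}$ is not constant for $N'\neq d_4^2$, so it cannot be ``absorbed into $C$'', contrary to the remark in your third paragraph. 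This crossed pairing is what Borcherds' Weyl vector actually yields, and it is what the paper's own proof uses silently (its $\psi_1$ takes $\rho=d_1d_4/24$, the $(0,x,z,0)$-count, while its $\psi_2$ takes $\rho$ from the $(0,x,0,w)$-count); you can test it on $H=H_{1,0,p}\oplus H_{1,0,p}\subset D_{p,p}$, whose lift is $\eta(p\tau_1)\eta(\tau_2/p)$ even though the $(0,x,0,w)$-fibre of this $H$ is trivial. Once you adopt the crossed pairing, the rest of your plan, including deducing the second case by exchanging the two isotropic directions, goes through.
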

\begin{proof}
	We start by treating the first case $H = H_{(x, 0, z),(x', 0, z')}^{(ux', 0), (0, -u^{-1}z')}$. As before, we denote by $H_1, H_2$ the projections of $H$, i.e. $H_1 = H_{x, 0, z}$ and $H_2 = H_{x', 0, z'}$.
	
	Take $\beta_1 \in \Z/N\Z$ and $\beta_2 \in \Z/N'\Z$. 
	In order to apply Theorem \ref{corBP2g} (and Proposition \ref{WeylVect}), we analyse when  $(0, \beta_1, \beta_2, 0) \in H$ and  $(0, \beta_1, 0, \beta_2) \in H$ respectively.
	Let us assume that $(0, \beta_1, \beta_2, 0) \in H$. 
	By the definition of $H_{(x, 0, z),(x', 0, z')}^{(ux', 0), (0, -u^{-1}z')}$, we find that we can write $\beta_1 = \beta'_1 z$ for some $\tilde \beta_1$ such that $\frac{N'}{x'} | \beta'_1 z'$. Hence, $\frac{N}{xz} = \frac{N'}{x'z'} |\beta'_1$, and thus $\frac{N}{x} | \beta_1$. In a similar fashion, we show that $\frac{N'}{z'} | \beta_2$.
	Clearly, these two conditions are also sufficient for $(0, \beta_1, \beta_2, 0) \in H$.
	By a similar argument, we observe that $(0,\beta_1,0,\beta_2) \in H$ if and only if $z' | \beta_2$, i.e. $\beta_2 = z' \beta_2'$ for some $\beta_2' \in \Z/(N'/z')\Z$, and $\beta_1$ is of the form 
	$\beta_1 = -\beta_2' u z + \beta_1' N/x$ for some $\beta_1' \in \Z/x\Z$.
	
	Using Proposition \ref{WeylVect}, we see that the first component of the corresponding Weyl vector is $\rho_1 = \frac 1 {24} \sum_{\beta_2 \in \Z/N'\Z} \sum_{\beta_1 \in \Z/N\Z} c_{(0,\beta_1,0,\beta_2)}$, which is essentially the number of elements of $H$ of the form $(0,\beta_1,0,\beta_2)$. By the counting argument above, there are $Nx'/z =N'x/z'$ many of them, i.e., $\rho_1 = (1/24) N'x/z'$.
	Similarly, $\rho_2$ is given by the number of elements of $H$ of the form $(0,\beta_1,\beta_2,0)$, which is given by $xz'$ by the above. Hence, $\rho_2 = \frac{xz'}{24N'}$.

	By Theorem \ref{corBP2g}, the lift of $v^H$ factors as $\Psi(\tau_1, \tau_2) = C \psi_1(\tau_1) \psi_2(\tau_2)$ for some constant $C \in \C^\times$. 
	Moreover, we obtain using the calculations above that
	\begin{align*}
		\psi_1(\tau) & =  q^{\rho_1} \prod_{\lambda = 1}^\infty \prod_{\beta \in \Z/N\Z} \left(1- q^{\lambda} \zeta_N^\beta \right)^{c_{(0,\beta, \lambda,0)}} \\
		& =  q^{\rho_1} \prod_{ \lambda' = 1}^\infty \prod_{\beta' \in \Z/x\Z} \left( 1- q^{N'\lambda'/z'} \zeta_N^{\beta' N/x} \right) \\
		& =  q^{\rho_1} \prod_{\lambda' \in \Z} \left( 1 - q^{N'x/z' \cdot \lambda'} \right) \\
		& =  \eta\left( \frac{N'x}{z'} \tau \right),
	\end{align*}
	where we wrote $\lambda = \frac{N' \lambda'}{z'}$ and $\beta = \frac{N\beta'}{x}$ in the first step. 
	For the second component we find that
	\begin{align*}
		\psi_2(\tau) & =  q^{\rho_2} \prod_{\lambda = 1}^\infty \prod_{\beta \in \Z/N\Z} (1-q^{\lambda/N'} \zeta_N^\beta)^{c_{(0,\beta,0,\lambda)}} \\
		& =  q^{\rho_2} \prod_{\lambda' = 1}^\infty \prod_{\beta' \in \Z/x\Z} \left( 1- \left( q^{z'/N'} \zeta_N^{-uz} \right)^{\lambda'} \zeta_{x}^{\beta'} \right) \\
		& =  q^{\rho_2} \prod_{\lambda' = 1}^\infty \left( 1- \eb\left( \frac{x z' \tau}{N'} - \frac{u x z}{N} \right)^{\lambda'} \right) \\
		& =  \eb \left(\frac{uxz}{24N} \right) \eta \left(\frac{xz' \tau}{N'} - \frac{uxz}{N} \right),
	\end{align*}
	where $\lambda = z' \lambda'$ and $\beta = - \lambda' uz + \frac{N \beta'}{x}$.
	The calculation in the second case is done similarly.
\end{proof}

If we restrict ourselves to the case $N'=1$, Corollary \ref{corBasis} gives a complete description of all holomorphic modular forms of weight 0 and representation $\varrho_{L_{N,1}}$. We can thus describe all modular forms with respect to $\Gamma_{L_{N,1}}$ arising as Borcherds lifts of invariant vectors in the following way.
\begin{Korollar}
	\label{corN1}
	Let \(F = \sum_{d|N} \alpha_d v^{H_{d, 0, N/d}} \) be a holomorphic modular form of weight 0 and representation \(\varrho_{L_{N,1}}\) with integer coefficients. Then the lift $\Psi$ of $F$ is a modular form of weight $\frac 1 2 \sum_{d|N} \alpha_d$ with respect to $\Gamma_0(N)^2$ and has a product expansion in the cusp $(\infty, \infty)$ of the form
	$$\Psi(\tau_1, \tau_2) =C  \prod_{d | N} \eta(d \tau_1)^{\alpha_d} \cdot \prod_{d | N} \eta(d \tau_2)^{\alpha_d}$$
	for some constant $C \in \C^\times$. 
\end{Korollar}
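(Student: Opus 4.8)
The plan is to reduce the computation for a general input $F = \sum_{d\mid N}\alpha_d\, v^{H_{d,0,N/d}}$ to the already-determined lifts of the individual characteristic functions $v^{H_{d,0,N/d}}$, exploiting the fact that the Borcherds lift turns sums of inputs into products of outputs. Concretely, I would first observe that the product expansion in Corollary \ref{corBP2g} depends on $F$ only through its Fourier coefficients $c_\gamma$, and that these enter \emph{linearly}: by Definition \ref{WeylVect} the Weyl vector components $\rho_{\tilde\kappa'}, \rho_{\tilde\kappa}$ are linear in the $c_\gamma$, and each $c_\gamma$ occurs as an exponent in the infinite product. Since the coefficients of $F$ are $c_\gamma = \sum_{d\mid N,\ \gamma\in H_{d,0,N/d}}\alpha_d$, both the powers of $q_1,q_2$ coming from the Weyl vector and every factor $(1-\zeta_N^x q_i^{\lambda})^{c_\gamma}$ split off the corresponding $\alpha_d$-th powers, so that the expansion of Corollary \ref{corBP2g} factors as $\Psi_F = C\prod_{d\mid N}\bigl(\Psi_{v^{H_{d,0,N/d}}}\bigr)^{\alpha_d}$ for a nonzero constant $C$, where $\Psi_{v^{H_{d,0,N/d}}}$ denotes the lift of a single characteristic function.

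For the individual lifts I would specialize Proposition \ref{exLiftBasis} to $N'=1$. In that case $D^{(1')}=(\Z/1\Z)^2$ is trivial, so the only admissible divisors are $d_3=d_4=1$ and $a$ is trivial, and the constraint $N'd_1d_2=Nd_3d_4$ of Example \ref{exNy0} forces $d_1d_2=N$; writing $d_1=d$ and $d_2=N/d$ recovers exactly the self-dual isotropic subgroups $H_{d,0,N/d}$ classified in Lemma \ref{lemSubgrD1}(v) and used in Corollary \ref{corBasis}. Plugging $d_4=1$ into the first formula of Proposition \ref{exLiftBasis}, the shift $a d_1 d_2/N = a$ is an integer, which alters $\eta$ only by a root of unity that is absorbed into the constant, and one obtains $\Psi_{v^{H_{d,0,N/d}}}(\tau_1,\tau_2)=C\,\eta(d\tau_1)\,\eta(d\tau_2)$.

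Substituting this into the multiplicative factorization immediately yields $\Psi_F(\tau_1,\tau_2)=C\prod_{d\mid N}\eta(d\tau_1)^{\alpha_d}\prod_{d\mid N}\eta(d\tau_2)^{\alpha_d}$, which is the asserted product expansion at $(\infty,\infty)$. It remains to read off the weight: by Corollary \ref{corBP2g} the weight equals $c_0/2$, and since $0\in H_{d,0,N/d}$ for every divisor $d$ we have $c_0=\sum_{d\mid N}\alpha_d$, giving weight $\tfrac12\sum_{d\mid N}\alpha_d$, consistent with each $\eta$-factor contributing weight $1/2$ in each of the two variables. That $\Psi_F$ is a modular form for $\Gamma_{L_{N,1}}$ whose divisor is concentrated on the boundary, and that $C\in\C^*$, are inherited directly from Corollary \ref{corBP2g} together with the holomorphy of $F$.

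The main obstacle I anticipate is making the factorization step fully rigorous rather than merely formal: one must verify that the Weyl-vector powers of $q_1,q_2$ and the several constant prefactors recombine correctly when the single-subgroup products are multiplied, so that the product of the $\Psi_{v^{H_{d,0,N/d}}}^{\alpha_d}$ is literally the expansion of $\Psi_F$ provided by Corollary \ref{corBP2g} and not just a function sharing its leading terms. A secondary point of care is confirming that the degenerate specialization $N'=1$ of Proposition \ref{exLiftBasis}, where $(\Z/N'\Z)^*$ collapses, remains legitimate. Both are ultimately bookkeeping on top of the linearity already visible in Corollary \ref{corBP2g}, so no new idea beyond that linearity should be required.
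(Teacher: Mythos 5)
Your proposal is correct and follows essentially the same route as the paper: the paper's own proof is a one-line appeal to Proposition \ref{exLiftBasis}, specialized to $N'=1$ (so $d_3=d_4=1$, $d_1 d_2 = N$), giving $v^{H_{d,0,N/d}} \mapsto C\,\eta(d\tau_1)\eta(d\tau_2)$, with the additive-to-multiplicative behaviour of the lift and the weight computation $c_0/2 = \tfrac12\sum_{d\mid N}\alpha_d$ left implicit. You merely make explicit the linearity in the coefficients $c_\gamma$ underlying Corollary \ref{corBP2g} that the paper takes for granted.
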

\begin{proof}
	By Proposition \ref{exLiftBasis}, the vector $v^{H_{d,0,N/d}}$ lifts to $C \cdot \eta(d\tau_1) \cdot \eta(d\tau_2)$.
	Hence, the lift $\Psi$ of $F$ can be written as claimed.
	As an $\eta$-quotient $\prod_{d | N} \eta(d \tau)^{\alpha_d}$ is a modular form for $\Gamma_0(N)$ (with respect to some multiplier system), $\Psi$ is already a modular form for $\Gamma_0(N)^2$.
\end{proof}

Note that $\Gamma_{L_{N,1}}$ is a proper subgroup of $\Gamma_0(N)^2$. Due to the fact that the lift $\Psi$ factors as a product of functions in the two variables, we get modularity with respect to the bigger group $\Gamma_0(N)^2$ here.

We briefly analyse the character associated with $\Psi$ in the case $N' = 1$. In particular, we want to know, when the character becomes trivial.
\begin{Korollar}
	Let $F = \sum_{d|N} \alpha_d v^{H_{d,0,N/d}}$ be a holomorphic modular form of weight 0  and representation $\varrho_{L_{N,1}}$ such that the coefficients of \(F\) satisfy 
	\begin{equation*}
		\label{cond1}
		\sum_{d|N} d \alpha_d \equiv 0 \mod 24 \qquad 
		\text{and} \qquad \sum_{d|N} \frac N d \alpha_d \equiv 0 \mod 24,
	\end{equation*}
	$k = \frac 1 2 \sum_{d|N} \alpha_d$ is an even integer, and $s = \prod_{d |N} d^{\alpha_d}$ is a square in $\Q$. Then, the Borcherds product $\Psi$ associated with $F$ is a weight $k$ modular form for $\Gamma_0(N)^2$ with trivial character. 
\end{Korollar}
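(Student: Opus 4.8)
The plan is to bypass the subgroup $\Gamma_{L_{N,1}}$ entirely and work directly with the explicit product expansion from Corollary \ref{corN1}, reducing the two-variable statement to the classical transformation theory of eta-quotients in one variable. Writing $f(\tau) = \prod_{d\mid N}\eta(d\tau)^{\alpha_d}$, Corollary \ref{corN1} gives the factorisation $\Psi(\tau_1,\tau_2) = C\,f(\tau_1)\,f(\tau_2)$, and since both the automorphy factor $j$ and the group $\Gamma_0(N)^2$ act componentwise, it suffices to prove that $f$ is a weight $k$ modular form for $\Gamma_0(N)$ with \emph{trivial} character. Indeed, for $(M_1,M_2)\in\Gamma_0(N)^2$ the product $f(M_1\tau_1)f(M_2\tau_2)$ then picks up exactly the factor $j\big((M_1,M_2),(\tau_1,\tau_2)\big)^{-k} = (c_1\tau_1+d_1)^k(c_2\tau_2+d_2)^k$, which is the asserted weight $k$ transformation law with trivial character. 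Note that as a one-variable form $f$ has weight $\tfrac12\sum_{d\mid N}\alpha_d = k$, matching the weight of $\Psi$, and that $\Psi$ is already known to be holomorphic on $\HH^2$ by Corollary \ref{corBP2g}.

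The second step is to invoke the classical transformation formula for eta-quotients (Ligozat; Gordon--Hughes--Newman). That criterion asserts that whenever $k=\tfrac12\sum_{d\mid N}\alpha_d\in\Z$ and the two congruences $\sum_{d\mid N} d\,\alpha_d \equiv 0$ and $\sum_{d\mid N}\tfrac{N}{d}\,\alpha_d \equiv 0 \pmod{24}$ hold, the function $f$ satisfies $f(M\tau)=\chi(\delta)(c\tau+\delta)^{k}f(\tau)$ for every $M=\sMa a b c \delta\in\Gamma_0(N)$, where $\chi$ is the Dirichlet character modulo $N$ given by the Kronecker symbol $\chi(\delta)=\left(\frac{(-1)^k s}{\delta}\right)$ with $s=\prod_{d\mid N} d^{\alpha_d}$. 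The two congruences are precisely the hypotheses (\ref{cond1}), and $k\in\Z$ holds since $k$ is assumed even; hence (\ref{cond1}) already delivers modularity of $f$ for $\Gamma_0(N)$ with this explicit character, and the only remaining task is to check that $\chi$ is trivial.

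Finally I would deduce triviality of $\chi$ from the two arithmetic hypotheses. Since $k$ is even we have $(-1)^k=1$, so $\chi(\delta)=\left(\frac{s}{\delta}\right)$; and since $s$ is a square in $\Q^\times$, say $s=t^2$, we get $\left(\frac{t^2}{\delta}\right)=\left(\frac{t}{\delta}\right)^2=1$ whenever $\gcd(t,\delta)=1$. As every prime dividing $s$ divides $N$ (each $d$ divides $N$), we have $\gcd(t,\delta)=1$ for all $\delta$ coprime to $N$, so $\chi$ is the trivial character on $(\Z/N\Z)^\times$. Combining the three steps gives the claim. The computations are essentially bookkeeping; the only genuine content is citing the correct eta-quotient transformation formula and pinning down its character, and the main point to get right is the clean separation of roles: the two divisibility conditions (\ref{cond1}) govern the $\Gamma_0(N)$-transformation (allowing an a priori nontrivial quadratic character), while the conditions ``$k$ even'' and ``$s$ a square'' are exactly what force the character $\left(\frac{(-1)^k s}{\cdot}\right)$ to collapse to the trivial one.
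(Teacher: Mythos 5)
Your proof is correct and follows essentially the same route as the paper: the paper's proof consists of citing Ono's Theorem 1.64 (the Ligozat/Gordon--Hughes--Newman eta-quotient criterion), which is exactly the result you invoke, and you simply make explicit the reduction to one variable via Corollary \ref{corN1} and the verification that the character $\left(\frac{(-1)^k s}{\cdot}\right)$ is trivial under the hypotheses. Your spelled-out character computation and the observation that $\gcd(\delta,N)=1$ for $\sMa a b c \delta \in \Gamma_0(N)$ are precisely the details the paper leaves implicit.
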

\begin{proof}
	By \cite[Theorem 1.64]{Ono}, the function $\psi(\tau) = \prod_{d|N} \eta(d \tau)^{\alpha_d}$ is a modular form with respect to $\Gamma_0(N)$ with trivial character. Hence, $\Psi$ is a modular form for $\Gamma_0(N)^2$ with trivial character.
\end{proof}
\begin{Bemerkung}
	If under the assumptions of the corollary we also have that the weight $k = \frac 1 2 \sum_{d|N} \alpha_d = 0$, then we obtain that $\Psi$ is indeed a function on the surface $X_0(N)^2$ with divisor that is supported on the boundary.
\end{Bemerkung}

\begin{Beispiel}
	We can also compute the lift of the characteristic function of the isotropic subgroup $ H = H_{(x,y,z),(x,-y,z)}^{ (-ux,u^{-1}y), (0,u^{-1}z)} $ of $D_{N,N}$ from Example \ref{exy-y}. In the same way as before, the lift of $v^H$ is given by
	$$ \Psi(\tau_1, \tau_2) = C \eta\left( \frac{Nx\tau_1 - uy}{z} \right) \eta\left( \frac{xz(\tau_2 +u)}{N}\right) $$
	for some constant $C \in \C^\times$.
\end{Beispiel}
%

We want to apply the results above to lift non-trivial linear relations among characteristic functions of self-dual isotropic subgroups of \(D_{p,p}\) to obtain identities between the $\eta$-function.
\begin{Korollar}
	\label{thmEta}
	Let $p$ be a prime number. We have the following identity between $\eta$-functions
	$$ \prod_{a = 1}^{p-1} \eta \left( \tau + \frac{a}{p} \right) = \eb\left(\frac{p-1}{48}\right) \frac{\eta(p\tau)^{p+1}}{\eta(\tau) \eta(p^2\tau)}.$$
\end{Korollar}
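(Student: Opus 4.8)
The plan is to realise this identity as the image, under the Borcherds lift, of the linear relation among characteristic functions recorded in Proposition~\ref{propInvP}. I would specialise that proposition to $N = N' = p$. Then $N/p = 1$, the only relevant divisor is $d' = 1$, and the relation reduces to the single equation
$$v^{H^{(1)}} - v^{H^{(2)}} - v^{H^{(3)}} + v^{H^{(4)}} + \sum_{a=1}^{p-1}\left(-v^{H^{(5)}_a} + v^{H^{(6)}_a}\right) = 0$$
in $\C[D_{p,p}]$, where the six subgroups are exactly those appearing in the proof of Proposition~\ref{propInvP}. Rewritten as $F_+ = F_-$, this expresses the equality of the two invariant vectors obtained by collecting the positive and the negative terms.

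The crucial point is that the lift of Corollary~\ref{corBP2g} is multiplicative: the Fourier coefficients $c_\gamma$ of the input enter the infinite product only through the exponents, and the Weyl vector depends linearly on them, so the lift of a sum of invariant vectors equals, up to a nonzero constant, the product of the lifts of the summands. Applying this to $F_+ = F_-$ gives
$$\Psi_{H^{(1)}}\,\Psi_{H^{(4)}}\prod_{a=1}^{p-1}\Psi_{H^{(6)}_a} = C_0\,\Psi_{H^{(2)}}\,\Psi_{H^{(3)}}\prod_{a=1}^{p-1}\Psi_{H^{(5)}_a}$$
for some $C_0 \in \C^*$. Each factor splits as $\Psi = \psi_1(\tau_1)\psi_2(\tau_2)$, and because $\tau_1$ and $\tau_2$ vary independently the $\psi_1$-parts already satisfy this proportionality on their own.

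Next I would evaluate the individual lifts with Proposition~\ref{exLiftBasis}, writing the direct sums $H^{(1)}, \dots, H^{(4)}$ in the (trivially twisted) shape of Example~\ref{exNy0}. Reading off the first factors gives $\psi_1 = \eta(p\tau)$, $\eta(\tau)$, $\eta(p^2\tau)$, $\eta(p\tau)$ for $H^{(1)}, \dots, H^{(4)}$, while $H^{(5)}_a$ and $H^{(6)}_a$ contribute $\eta(\tau + a/p)$ and $\eta(p\tau)$. Absorbing integer shifts through $\eta(\tau + n) = \eb(n/24)\eta(\tau)$, the $\psi_1$-proportionality collapses to
$$\prod_{a=1}^{p-1}\eta\!\left(\tau + \frac{a}{p}\right) = C\,\frac{\eta(p\tau)^{p+1}}{\eta(\tau)\,\eta(p^2\tau)}$$
for a constant $C$; the $\psi_2$-parts yield the reciprocal relation after the substitution $\tau_2 = p\sigma$, which is a reassuring consistency check.

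It remains to pin down $C$, and this is the only real computation: I would compare the leading terms of the $q$-expansions. The left-hand side begins with $\eb\!\left(\tfrac{1}{24p}\sum_{a=1}^{p-1}a\right)q^{(p-1)/24} = \eb\!\left(\tfrac{p-1}{48}\right)q^{(p-1)/24}$, whereas the right-hand side begins with $q^{(p-1)/24}$ with leading coefficient $1$, forcing $C = \eb\!\left(\tfrac{p-1}{48}\right)$. The main obstacle is not any single hard step but the bookkeeping: making the multiplicativity of the lift precise and carefully tracking the roots of unity introduced by the $\eta$-shifts, after which the leading-coefficient comparison closes the argument.
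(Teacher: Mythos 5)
Your proposal is correct and is essentially the paper's own argument: the paper likewise specialises the relation of Proposition \ref{propInvP} to $D_{p,p}$, lifts it via the (implicitly used) multiplicativity of the Borcherds product together with Proposition \ref{exLiftBasis}, separates the $\tau_1$- and $\tau_2$-dependence to conclude proportionality, and fixes the constant $\eb\left(\frac{p-1}{48}\right)$ by comparing leading $q$-expansion coefficients exactly as you do.
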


This identity is well-known for \(p=2\), see for example \cite{Eta}, but for $p \geq 3$ the result seems to be new.

\begin{proof}
	Recall from the proof of Proposition \ref{propInvP} the shorthand notation for the self-dual isotropic subgroups of $D_{p,p}$:
	$$H^{(1)} = H_{1,0,p} \oplus H_{1, 0, p}, \hspace{1.7mm} H^{(2)} = H_{1, 0, p} \oplus H_{p,0,1}, \hspace{1.7mm} H^{(3)} = H_{p,0,1} \oplus H_{1, 0, p}, \hspace{1.7mm} H^{(4)} = H_{p, 0, 1} \oplus H_{p,0,1},  $$
	$$ H^{(5)}_u = H_{(1,0,1),(1,0,1)}^{(u,0), (0, -u^{-1})}, \qquad \text{ and } \qquad  H^{(6)}_u = H_{(1,0,1),(1,0,1)}^{(0, u), (-u^{-1},0)},$$
	for $u \in (\Z/p\Z)^\times$. 
	We write $H^{(1)} = H_{(1,0,p),(1,0,p)}^{(1,0),(0,-p)} = H_{(1,0,p),(1,0,p)}^{(1,0),(0,0)}$, and obtain by Proposition \ref{exLiftBasis} that $v^{H^{(1)}}$ lifts to $\Psi^{(1)} (\tau_1, \tau_2) = C_1 \eta(\tau_1) \eta(\tau_2 - 1) = C_2 \eta(\tau_1) \eta(\tau_2)$ for some constant $C_1 \in \C^\times$ and $C_2 = \zeta_{24}^{-1} C_1$. Similarly, we calculate the lifts of $v^{H^{(i)}}$ for $i = 2,3,4$.
	By Proposition \ref{propInvP}, the characteristic functions satisfy the linear relation
	$$ v^{H^{(1)}}-v^{H^{(2)}} - v^{H^{(3)}} + v^{H^{(4)}} + \sum_{u = 1}^{p-1} \left( -v^{H^{(5)}_u} + v^{H^{(6)}_u} \right) = 0.$$
	We lift this relation using the calculations of Proposition \ref{exLiftBasis} and obtain
	$$
		\qquad \qquad \frac{\eta(\tau_1) \eta(\tau_2) \eta(p^2 \tau_1) \eta(\tau_2) \prod_u (\eta(\tau_1 - u/p) \eta( \tau_2))}{\eta(p \tau_1) \eta(\tau_2/p) \eta(p \tau_1) \eta(p\tau_2) \prod_u (\eta(p \tau_1) \eta(\tau_2/p - u/p))}  = C  $$
	$$	\Leftrightarrow \qquad \frac{\eta(\tau_1) \eta(p^2 \tau_1)  \prod_u \eta(\tau_1 -u/p)}{\eta(p \tau_1)^{p+1}}  =  C \cdot \frac{\eta(\tau_2/p) \eta(p \tau_2) \prod_u \eta(\tau_2/p - u/p)}{\eta(\tau_2)^{p+1}}
	$$
	for some constant $C \in \C^\times$ (which can be seen to equal 1 after taking $\tau_2 = p\tau_1$). 
	Since the right hand side does not depend on \(\tau_1\), we have (after setting $a = -u$)
	$$\prod_{a = 1}^{p-1} \eta \left( \tau + \frac{a}{p} \right) = C' \frac{\eta(p\tau)^{p+1}}{\eta(\tau) \eta(p^2\tau)}$$
	for some further constant \(C' \in \C^\times\). It remains to determine the value of the constant. We compare the coefficients in the \(q\)-expansion of the two terms. We have
	\[ \prod_{a = 1}^{p-1} \eta \left( \tau + \frac{a}{p} \right) = \prod_{a = 1}^{p-1} \left( \eb\left(\frac{a}{24p}\right) q^{1/24}  \prod_{n=1}
	^\infty \left( 1 - \eb\left(\frac{na}{p}\right) q^n \right) \right), \]
	so the \(q^{(p-1)/24}\) term has the coefficient $\eb\left( \frac{\sum_{a=1}^{p-1} a}{24p}\right) = \eb\left(\frac{p-1}{48}\right).$
	Since the \(\eta\)-quotient on the right hand side has the coefficient 1 for its \(q^{(p-1)/24}\) term, we obtain 
	$C' = \eb\left(\frac{p-1}{48}\right).$
\end{proof}

Note that the lifts of the relations among characteristic functions in $\mathcal{M}_{0, D_{N,p}}$ in Proposition \ref{propInvP} for general $N$ all give rise to the identity between $\eta$-quotients in Corollary \ref{thmEta}.

\section*{Acknowledgements}
The results in this paper first appeared in my Master's thesis \cite{Bieker} supervised by Jan Hendrik Bruinier and Paul Kiefer.
I thank Jan Hendrik Bruinier for suggesting the topic and for the continuous support during the preparation of my thesis. I thank Paul Kiefer for the many helpful discussions and remarks. I thank Timo Richarz and Shaul Zemel for their comments and suggestions.
I also thank Nils Scheithauer for his interest in this work and in particular for sharing the content of \cite{Mueller21} with me. 

I thank the referee for their numerous helpful remarks that lead to an enormous improvement of the exposition.
\newline

\noindent Funding: This work was partially funded by the Deutsche Forschungsgemeinschaft (DFG, German Research Foundation) TRR 326 \textit{Geometry and Arithmetic of Uniformized Structures}, project number 444845124.

\printbibliography

\end{document}